\newdimen\plusheight
\def\+{\;\lower\plusheight\hbox{$+$}\;}
\newdimen\minusheight
\def\-{\;\lower\minusheight\hbox{$-$}\;}
\newdimen\cdotsheight
\def\cds{\lower\cdotsheight\hbox{$\cdots$}}
\numberwithin{equation}{section}
\theoremstyle{plain}
\newtheorem{theorem}{Theorem}[section]
\newtheorem{lemma}{Lemma}[section]
\newtheorem{example}{Example}[section]
\newtheorem{corollary}{Corollary}[section]
\newtheorem{definition}{Definition}[section]
\newtheorem{remark}{Remark}[section]
\newtheorem{note}{Note}[section]
  \newenvironment{nouppercase}{%
   \renewcommand{\uppercasenonmath}[1]{}}{}
	 \newcommand{\Keywords}[1]{\par\noindent
   {\small{Keywords and phrases}: #1}}
   \newcommand{\AMS}[1]{\par\noindent
   {\small{AMS Subject Classification (2020)}: #1}}
\begin{document}

\title{SOME ASPECTS of $s \lambda $-Closed Sets on Separation Axioms And Compactness in GT-spaces}

\author{Amar Kumar Banerjee$^1$}
\author{Jagannath Pal$^2$}
\newcommand{\acr}{\newline\indent}
 \maketitle
 \address{{1\,} Department of Mathematics, The University of Burdwan, Golapbag, East Burdwan-713104,
 West Bengal, India.
 Email:akbanerjee@math.buruniv.ac.in\acr
 {2\,} Department of Mathematics, The University of Burdwan, Golapbag, East Burdwan-713104,
 West Bengal, India. Email:jpalbu1950@gmail.com  \\}

\begin{abstract}
Here we have  investigated some  aspects of $s\lambda$-closed sets on separation axioms including $s T_{2\frac{1}{2}} $ and  $s T_{3\frac{1}{2}} $ axioms and on compactness in generalized topological spaces. 
\end{abstract}

\begin{nouppercase}
\maketitle
\end{nouppercase}

\let\thefootnote\relax\footnotetext{
\AMS{Primary 54A05, 54A10, 54D10}
\Keywords {$ s\lambda$-closed sets, $ sg_\lambda $-closed sets, $s\lambda T_{2\frac{1}{2}} $ and  $s\lambda T_{3\frac{1}{2}} $ axioms.}

}

\qquad

\section{\bf Introduction}
\label{sec:int}
A. D. Alexandroff \cite{AD} generalized   a topological space (1940) to  $ \sigma $-space (Alexandroff space) by weakening union condition where only countable union of open sets are taken to be open. Since then many topologists began to think of  various generalizations of topological spaces and the ideas of  generalized topological spaces, bitopological spaces, bispaces etc. were introduced by making variations of the criteria of topology. Plenty of works have been done in this area some of which are mentioned here for reference \cite{AA, AR, BP,AC, CA, SM}. On the other hand, many authors brought in generalization of closed sets of topological spaces to obtain various types of generalized closed sets and studied  various topological properties of the generalized closed sets on different spaces \cite{JP1, JP2, BC, DM, MBD}. The idea of generalized closed sets (g-closed sets) in a topological space was given by Levine \cite{NL}. In 1987, Bhattacharyya and Lahiri \cite{BL} introduced the class of semi-generalized closed sets in a topological space. By taking an equivalent form of $ g $-closed sets,  M. S. Sarsak (2011) \cite{MS} introduced $ g_\mu $-closed sets in a generalized topological space and investigated various related topological properties of the set with low separation axioms with the help of $ \lambda_\mu $-closed sets. 

Here we have studied  the idea of semi generalized closed sets in a more general structure in generalized topological spaces by introducing  $sg_\lambda$-closed sets with the concept of $ s\lambda $-closed sets and  investigated how far several results of topological spaces are valid with $ s\lambda $-closed sets on separation axioms and on compact spaces, regular spaces, normal spaces, completely normal spaces, completely Hausdorff spaces and completely regular spaces. Moreover, some characterizations of these spaces have been derived.

\section{\bf Preliminaries, basic definitions and some results}
 \label{sec:pre}
 
A generalized topology  \cite{MS} on a nonempty set $ Y $ is the collection $\gamma $ of subsets of $ Y $ such that $\gamma $ is closed under arbitrary unions and  $ \emptyset \in \gamma $. Members of $ \gamma $ are called $ \gamma $-open sets and complements are $ \gamma $-closed sets and the ordered pair $ (Y, \gamma) $ is known as a generalized topological space  (GTspace in short).  Definition of $ \gamma $-closure of a set $ A $ denoted by $ \overline{A_\gamma} $ is similar to that of a topological space. 
When there is no confusion, a  GTspace  $(Y , \gamma)$ will simply be denoted by $ Y $ and  sets are always subsets of $ Y $ unless otherwise stated.

\begin{definition} \label{1} \cite{BL}.
A set $ D $ of $ Y $ is said to be semi $ \gamma $-open ($ s\gamma $-open) if there exists a $ \gamma $-open set $ V $ such that $ V\subset D \subset \overline{V_\gamma} $. $ D $ is  semi $ \gamma $-closed ($ s\gamma $-closed) if  $ Y - D $ is $ s\gamma $-open.
\end{definition}

\begin{definition}\label{3} \cite {JP2}. We define semi-kernel of a set $ D $ of $ Y $ with respect to $ \gamma $ denoted by $ sker_\gamma(D)= \cap \{ G: D\subset G, G $ is $ s\gamma $-open\}. \end{definition}

\begin{definition} \label{3A}(c.f.\cite {MS}). 
A subset $D$ of $ (Y,  \gamma) $  is said to be $ s\lambda$-closed if $ D = M\cap N $  where $ M=sker_\gamma(M) $   and $ N $ is a $ s\gamma $-closed set. $ D $ is $ s\lambda$-open if $ Y -D $ is  $ s\lambda $-closed.
\end{definition}

\begin{definition}\label{4} (c.f.\cite{BN}, \cite{JRM}). Let $ (Y,\gamma) $ be a GTspace, we require the following definitons: 
 
(1) a point $ y\in Y $ is said to be a $ s\gamma $-adherence (resp. $ s\lambda $-adherence) point of a set $ D $ of $ Y $ if for every $ s\gamma $-open (resp. $ s\lambda $-open) set $ V $ containing $ y $ such that $ D\cap V\not=\emptyset $. The set of all $ s\gamma $-adherence (resp. $ s\lambda $-adherence) points of $ D $ is called $ s\gamma $-closure (resp. $ s\lambda $-closure) of $ D $ and is denoted by $ \overline{sD_\gamma} $ (resp. $ \overline{sD_\lambda}) $;
 
(2)  $ s\gamma $-interior (resp. $ s\lambda $-interior) of a set $ D $ of $ Y $ is defined as the union of all $ s\gamma $-open (resp. $ s\lambda $-open) sets contained in $ D $ and is denoted by $ sInt_\gamma(D) $ (resp. $ sInt_\lambda(D) $).
 \end{definition}
 
Clearly $ y\in \overline{sD_\gamma} $ (resp. $ y\in \overline{sD_\lambda} $) if and only if  any $ s\gamma $-open (resp. $ s\lambda $-open) set containing $ y $ intersects $ D, D \subset Y $. Again
we see that  $ \gamma $-open set $ \Rightarrow s\gamma $-open set $ \Rightarrow s\lambda $-open set and $ \gamma $-closed set $ \Rightarrow s\gamma $-closed set $ \Rightarrow s\lambda $-closed set. It can be easily verified that arbitrary intersection of $ s\gamma $-closed sets is $ s\gamma $-closed and arbitrary intersection of sets $ M $ where $ M=sker_\gamma(M) $ is a set like $ M $ and hence arbitrary intersection of $ s\lambda $-closed sets is  $ s\lambda $-closed; so arbitrary union of $ s\gamma $-open (resp. $ s\lambda $-open) sets is $ s\gamma $-open (resp. $ s\lambda $-open) and so collection of $ s\gamma $-open (resp. $ s\lambda $-open) sets forms  generalized topology on $ Y $.

\begin{theorem}\label{5}(c.f.\cite{BC}).
Let $ D,E $ be subsets of $ Y $; then for $ s\gamma $-closure  following hold:
 
(1) $  \overline{sD_\gamma}=\bigcap\{P:D\subset P; P $ is $ s\gamma $-closed\}; \quad \qquad (2) $  \overline{sD_\gamma}$  is $ s\gamma $-closed;

(3) if $ D\subset E $, then $ \overline{sD_\gamma} \subset \overline{sE_\gamma}
$;  \qquad  \qquad \qquad \qquad(4) $ D \subset\overline{sD_\gamma} $ and $\overline{s \overline{(sD_\gamma)}_\gamma}= \overline{sD_\gamma} $;

(5) $ D $ is $ s\gamma $-closed if and only if $ D=\overline{sD_\gamma} $. \quad \quad

And for $ s\gamma $-interior for the subsets $ D,E $  of $ Y $ following hold:
 
(1)  $ sInt_\gamma(D) \subset D $; \quad\qquad \qquad  \qquad \qquad (2) if $ D\subset E $, then  $ sInt_\gamma(D)  \subset  sInt_\gamma(E) $;
 
(3) $  sInt_\gamma(D) $ is $ s\gamma $-open; \qquad \qquad \qquad \quad (4) $ D $ is $ s\gamma $-open if and only if $  sInt_\gamma(D) =D$.

The results of theorem \ref{5} are also valid for $ s\lambda $-closure as well as $ s\lambda $-interior.
\end{theorem}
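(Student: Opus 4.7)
The plan is to exploit the fact, already noted in the paragraph preceding the statement, that the collection of $s\gamma$-open sets forms a generalized topology on $Y$: $s\gamma$-closed sets are closed under arbitrary intersections and $s\gamma$-open sets under arbitrary unions. Consequently, the listed properties are the standard Kuratowski-style facts for a closure/interior operator on a GT-space, and the only point requiring genuine argument is (1) for the closure, which reconciles the definition of $\overline{sD_\gamma}$ via $s\gamma$-adherence points with the intersection representation.

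First I would prove (1) by a double inclusion. If $y\in \overline{sD_\gamma}$ and $P$ is any $s\gamma$-closed set containing $D$, then $Y-P$ is $s\gamma$-open; were $y\notin P$, the set $Y-P$ would be an $s\gamma$-open set containing $y$ and disjoint from $D$, contradicting the adherence condition, so $y\in P$. Conversely, if $y\notin \overline{sD_\gamma}$, pick an $s\gamma$-open $V$ with $y\in V$ and $V\cap D=\emptyset$; then $Y-V$ is $s\gamma$-closed, contains $D$, and excludes $y$, so $y$ is not in the intersection. Part (2) is then immediate since arbitrary intersections of $s\gamma$-closed sets are $s\gamma$-closed. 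Part (3) follows from (1), because every $s\gamma$-closed superset of $E$ is an $s\gamma$-closed superset of $D$. The inclusion $D\subset \overline{sD_\gamma}$ in (4) is immediate from the adherence-point definition, while the idempotency $\overline{s(\overline{sD_\gamma})_\gamma}=\overline{sD_\gamma}$ comes by combining (2) with (5): applying (5) to the $s\gamma$-closed set $\overline{sD_\gamma}$. For (5), if $D$ is $s\gamma$-closed, then $D$ itself is the smallest member of the family in (1), forcing $D=\overline{sD_\gamma}$; the converse direction is just (2).

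The interior half is completely dual and requires no new arguments. Parts (1) and (2) are tautological from the definition of $sInt_\gamma(D)$ as a union of $s\gamma$-open subsets of $D$; (3) uses that arbitrary unions of $s\gamma$-open sets are $s\gamma$-open; and for (4), if $D$ is $s\gamma$-open then $D$ itself occurs in the union defining $sInt_\gamma(D)$, giving $D\subset sInt_\gamma(D)\subset D$, while the converse follows from (3). Finally, the closing assertion that the same proofs transfer verbatim to $s\lambda$-closure and $s\lambda$-interior is valid because the only structural facts invoked, namely closure of the $s\gamma$-closed family under arbitrary intersections and of the $s\gamma$-open family under arbitrary unions, have already been observed to hold in the $s\lambda$ setting.

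I do not anticipate a substantial obstacle: every step is a routine unpacking of definitions, and the only non-trivial content is the equivalence between the adherence-point and intersection descriptions of the closure in (1), which is settled by the short contradiction argument indicated above.
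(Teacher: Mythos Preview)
Your proposal is correct and complete; the double-inclusion argument for (1) and the subsequent derivations are exactly the standard route, and your observation that only the closure of $s\gamma$-closed sets under arbitrary intersection (resp.\ $s\gamma$-open sets under arbitrary union) is needed makes the transfer to the $s\lambda$ case immediate. Note, however, that the paper itself does not supply a proof of this theorem: it is stated with a cross-reference (c.f.\ \cite{BC}) and treated as a known preliminary result, so there is no in-paper argument to compare against.
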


\begin{lemma}(c.f.\cite {JP2}).\label{2}
Suppose $ D\subset Y $; then $ sker_\gamma(sker_\gamma(D))=sker_\gamma(D)$ 
\end{lemma}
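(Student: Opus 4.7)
The plan is to prove the equality $sker_\gamma(sker_\gamma(D))=sker_\gamma(D)$ by establishing the two inclusions separately, working directly from Definition \ref{3}. Write $K = sker_\gamma(D)$ for brevity, so $K = \bigcap\{G : D\subset G,\; G \text{ is } s\gamma\text{-open}\}$. The entire argument will be a set-theoretic manipulation of this defining intersection; I will not need $K$ itself to be $s\gamma$-open, which is important because arbitrary intersections of $s\gamma$-open sets need not be $s\gamma$-open (the excerpt only asserts the dual closure property for $s\gamma$-closed sets).

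For the inclusion $K \subset sker_\gamma(K)$, I would first observe the general fact that $E \subset sker_\gamma(E)$ for every $E\subset Y$: every set $G$ in the collection defining $sker_\gamma(E)$ contains $E$, so the intersection contains $E$ as well. Specialising to $E=K$ gives the desired inclusion immediately.

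For the reverse inclusion $sker_\gamma(K)\subset K$, the key step is to show that every $s\gamma$-open set $G$ that contains $D$ also participates in the collection that defines $sker_\gamma(K)$. If $G$ is $s\gamma$-open with $D\subset G$, then by definition $K = sker_\gamma(D)\subset G$, so $G$ is an $s\gamma$-open superset of $K$; hence $sker_\gamma(K)\subset G$. Intersecting over all such $G$ then yields $sker_\gamma(K) \subset \bigcap\{G : D\subset G,\; G \text{ is } s\gamma\text{-open}\} = K$, completing the proof.

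I do not foresee a real obstacle: the whole statement is a formal idempotency property of a kernel-type operator, and the argument is essentially the same as the standard proof that the closure operator is idempotent, but phrased for intersections of open supersets. The only pitfall to avoid is the tempting but unjustified shortcut "$K$ is $s\gamma$-open, hence $sker_\gamma(K)=K$"; bypassing this by going through the arbitrary $G$ as above is what keeps the argument clean.
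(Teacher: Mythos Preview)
Your argument is correct: both inclusions are handled cleanly from Definition~\ref{3}, and you rightly avoid the unjustified shortcut of assuming $K=sker_\gamma(D)$ is itself $s\gamma$-open. Note that the paper does not supply its own proof of this lemma; it merely records the statement with a citation to \cite{JP2}, so there is no in-text argument to compare against, and your self-contained proof would in fact add value here.
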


\begin{lemma}\label{6}(c.f.\cite {AJ}).
A set $ D $ of $ Y $ is $ s\lambda $-closed if and only if $
 D=sker_\gamma(D) \cap \overline{sD_\gamma} $.
\end{lemma}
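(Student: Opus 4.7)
The plan is to prove the two implications separately, both with short direct arguments relying on what has already been established.

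For the easier ``if'' direction, I would start with the hypothesis $D = sker_\gamma(D)\cap\overline{sD_\gamma}$ and set $M = sker_\gamma(D)$ and $N = \overline{sD_\gamma}$. Lemma \ref{2} immediately gives $sker_\gamma(M) = sker_\gamma(sker_\gamma(D)) = sker_\gamma(D) = M$, and Theorem \ref{5}(2) tells me that $N$ is $s\gamma$-closed. Thus $D = M \cap N$ is exactly the decomposition required by Definition \ref{3A}, so $D$ is $s\lambda$-closed.

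For the ``only if'' direction, assume $D$ is $s\lambda$-closed, so $D = M \cap N$ with $M = sker_\gamma(M)$ and $N$ an $s\gamma$-closed set. The inclusion $D \subset sker_\gamma(D) \cap \overline{sD_\gamma}$ is automatic: $D \subset sker_\gamma(D)$ since every $s\gamma$-open set in the defining intersection contains $D$, and $D \subset \overline{sD_\gamma}$ by Theorem \ref{5}(4). For the reverse inclusion, the key small observation I need (and would verify in one line from Definition \ref{3}) is that $sker_\gamma$ is monotone: if $A\subset B$ then every $s\gamma$-open superset of $B$ is also an $s\gamma$-open superset of $A$, so $sker_\gamma(A)\subset sker_\gamma(B)$. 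Combined with the monotonicity of $s\gamma$-closure from Theorem \ref{5}(3), the inclusions $D \subset M$ and $D \subset N$ give
\[
sker_\gamma(D)\subset sker_\gamma(M)=M \qquad \text{and} \qquad \overline{sD_\gamma}\subset\overline{sN_\gamma}=N,
\]
where the last equality uses Theorem \ref{5}(5) since $N$ is $s\gamma$-closed. Intersecting these two yields $sker_\gamma(D)\cap\overline{sD_\gamma}\subset M\cap N = D$, completing the reverse inclusion.

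I don't anticipate a real obstacle here; the argument is essentially a ``sandwich'' using the characteristic idempotency/monotonicity properties of both $sker_\gamma$ and $\overline{s(\,\cdot\,)_\gamma}$. The only place to be mildly careful is the tiny lemma about monotonicity of $sker_\gamma$, which is not listed among the preliminaries but follows in one step from Definition \ref{3} and would be stated explicitly in the write-up.
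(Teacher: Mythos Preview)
Your argument is correct in both directions and uses exactly the natural idempotency/monotonicity sandwich one would expect. The paper itself does not supply a proof of this lemma (it is merely cited from \cite{AJ}), so there is no in-paper argument to compare against; your write-up, with the one-line verification that $sker_\gamma$ is monotone, would serve perfectly well as the omitted proof.
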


\begin{note}\label{7A}
We take analogously the definitions of finite intersection property, $ s\lambda G_\delta $-sets, $ s\lambda $-weakly separated, $ s\lambda $-strongly separared, $ s\lambda $-open cover, $ s\lambda $-neighbourhood and separation axioms viz. $ s\lambda T_0,  s\lambda T_1,$     $ s\lambda T_2, s\lambda T_3, s\lambda T_4, s\lambda T_5 $ axioms parallel to those of a topological space by taking $ s\lambda $-open sets, $ s\lambda $-closed sets instead of open sets, closed sets respectively.
\end{note}

\begin{lemma}\label{8}\cite{AJ}.
Suppose $ D\subset Y $, then $ Y - \overline{s(Y -D)_\lambda} = sInt_\lambda(D) $.
\end{lemma}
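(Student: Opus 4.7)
The plan is to establish the equality by a direct pointwise argument based on the characterization of the $s\lambda$-closure in terms of $s\lambda$-adherence points (Definition 2.4) together with the duality between $s\lambda$-open and $s\lambda$-closed sets. Since the content is essentially a De~Morgan-type identity, the main task is simply to translate one side into the other cleanly; no combinatorial obstacle is anticipated.

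First I would unpack what it means for a point $y$ to lie in $Y - \overline{s(Y-D)_\lambda}$. By Definition~\ref{4}, $y \in \overline{s(Y-D)_\lambda}$ exactly when every $s\lambda$-open set $V$ with $y \in V$ meets $Y-D$. Negating this, $y \notin \overline{s(Y-D)_\lambda}$ means that there exists an $s\lambda$-open set $V$ such that $y \in V$ and $V \cap (Y-D) = \emptyset$, equivalently $y \in V \subset D$.

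Next I would observe that the last condition is precisely the definition of $y$ belonging to the $s\lambda$-interior: from Definition~\ref{4}(2), $sInt_\lambda(D) = \bigcup\{V : V \subset D,\, V \text{ is } s\lambda\text{-open}\}$, so $y \in sInt_\lambda(D)$ iff there is some $s\lambda$-open set $V$ with $y \in V \subset D$. Combining the two equivalences yields $y \in Y - \overline{s(Y-D)_\lambda}$ iff $y \in sInt_\lambda(D)$, which gives the claimed set-equality.

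If a more algebraic exposition is preferred, I would alternatively use Theorem~\ref{5}(1) (valid for $s\lambda$-closure by the remark following that theorem) to write
\[
\overline{s(Y-D)_\lambda} \;=\; \bigcap\{P : Y-D \subset P,\ P \text{ is } s\lambda\text{-closed}\},
\]
take complements and apply De~Morgan, noting that $Y - D \subset P$ iff $Y - P \subset D$ and that $P$ is $s\lambda$-closed iff $Y - P$ is $s\lambda$-open, to arrive at $\bigcup\{V : V \subset D,\ V\ s\lambda\text{-open}\} = sInt_\lambda(D)$. Either route is routine; the only point to double-check is that the Definition~\ref{4} characterization of $\overline{sD_\lambda}$ and the $\bigcap$-characterization from Theorem~\ref{5} agree for $s\lambda$-closed sets, which is exactly the content of the final sentence of Theorem~\ref{5}.
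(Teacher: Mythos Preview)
Your argument is correct: both the pointwise route via Definition~\ref{4} and the algebraic route via Theorem~\ref{5}(1) with De~Morgan yield the identity without difficulty. Note, however, that the paper does not supply its own proof of this lemma---it is simply quoted from \cite{AJ}---so there is no in-paper argument to compare against; your proof would serve perfectly well as the missing justification.
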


\begin{lemma}\label{8A}(c.f.\cite{CG}).
Two non-empty sets $ G, H $ of $ (Y,\gamma) $ are $ s\lambda $-weakly separared if and only if $ (G\cap \overline{sH_\lambda})\bigcup (H\cap \overline{sG_\lambda})=\emptyset $.
\end{lemma}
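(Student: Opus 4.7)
The plan is essentially a direct unpacking of the definition combined with the elementary fact that a union of two sets is empty if and only if both sets are empty. By Note \ref{7A}, the notion of $s\lambda$-weakly separated sets is obtained by transporting the classical definition from topological spaces into the GT-space setting, with $s\lambda$-closures replacing ordinary closures. So $G$ and $H$ are $s\lambda$-weakly separated precisely when neither set meets the $s\lambda$-closure of the other, that is, $G\cap \overline{sH_\lambda}=\emptyset$ and $H\cap \overline{sG_\lambda}=\emptyset$.

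For the forward direction, I would assume $G$ and $H$ are $s\lambda$-weakly separated and conclude that both $G\cap \overline{sH_\lambda}$ and $H\cap \overline{sG_\lambda}$ are empty, whence their union is empty as well. For the reverse direction, I would use the observation that for any two sets $A$ and $B$, $A\cup B=\emptyset$ forces $A=\emptyset$ and $B=\emptyset$ separately (since $A\subset A\cup B$ and $B\subset A\cup B$). Applying this to $A=G\cap \overline{sH_\lambda}$ and $B=H\cap \overline{sG_\lambda}$ gives $G\cap \overline{sH_\lambda}=\emptyset$ and $H\cap \overline{sG_\lambda}=\emptyset$, which is precisely the $s\lambda$-weak separation of $G$ and $H$.

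There is no real obstacle here; the lemma is a convenient reformulation rather than a substantive result, and the only thing that might require a line of care is the explicit invocation of the definition imported in Note \ref{7A}. I would therefore keep the write-up short: state the equivalence of the two conditions by pointing to the trivial set-theoretic fact about empty unions, and note that nowhere in the argument is the specific structure of $\overline{s(\cdot)_\lambda}$ used beyond its appearance in the definition. The role of this lemma will presumably be to enable later proofs about $s\lambda$-separation to verify a single equation instead of a pair of disjointness conditions.
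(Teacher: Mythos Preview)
Your proposal is correct. The paper does not supply a proof of this lemma at all; it is stated with a reference to \cite{CG} and then used later (in the proofs of Theorems~\ref{49A} and~\ref{53}) exactly as you anticipate, to pass from $s\lambda$-weak separation to the pair of disjointness conditions $G\cap\overline{sH_\lambda}=\emptyset$ and $H\cap\overline{sG_\lambda}=\emptyset$. Your unpacking via Note~\ref{7A} and the trivial set-theoretic observation that a union is empty iff each summand is empty is precisely the intended content, so there is nothing to compare against and nothing to add.
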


\begin{lemma}\label{17}\cite{AJ}.
A GTspace $ (Y,\gamma) $ is $ s\lambda T_1$  if and only if every singleton of $ Y $ is $ s\lambda $-closed.
\end{lemma}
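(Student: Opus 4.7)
The plan is to mimic the standard topological proof of the $T_1$ characterization, using only the facts already recorded in the excerpt: arbitrary unions of $s\lambda$-open sets are $s\lambda$-open (stated in the paragraph following Definition \ref{4}), and a set is $s\lambda$-closed iff its complement is $s\lambda$-open (Definition \ref{3A}). The $s\lambda T_1$ axiom, as indicated in Note \ref{7A}, is the direct translation of topological $T_1$: for any two distinct points $x,y\in Y$ there exist $s\lambda$-open sets $U,V$ with $x\in U$, $y\notin U$ and $y\in V$, $x\notin V$.

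For the ``only if'' direction I would fix an arbitrary point $x\in Y$ and show $Y\setminus\{x\}$ is $s\lambda$-open. For each $y\in Y$ with $y\ne x$, the $s\lambda T_1$ hypothesis supplies an $s\lambda$-open set $V_y$ with $y\in V_y$ and $x\notin V_y$. Then
\[
Y\setminus\{x\}=\bigcup_{y\ne x} V_y,
\]
since every $y\ne x$ lies in its own $V_y$ and none of the $V_y$ contains $x$. Because arbitrary unions of $s\lambda$-open sets are $s\lambda$-open, $Y\setminus\{x\}$ is $s\lambda$-open and hence $\{x\}$ is $s\lambda$-closed.

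For the ``if'' direction I would assume every singleton is $s\lambda$-closed and verify the separation condition directly. Given distinct points $x,y\in Y$, the sets $\{x\}$ and $\{y\}$ are $s\lambda$-closed, so $U=Y\setminus\{y\}$ and $V=Y\setminus\{x\}$ are $s\lambda$-open. Clearly $x\in U$, $y\notin U$, $y\in V$ and $x\notin V$, which is exactly what $s\lambda T_1$ demands.

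There is really no substantive obstacle here; the only point that needs to be invoked carefully is that the collection of $s\lambda$-open sets is closed under arbitrary unions, which is precisely the ``generalized topology'' property established right after Definition \ref{4} in the preliminaries. Once that is cited, both implications are one-line set-theoretic observations.
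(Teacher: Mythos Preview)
Your argument is correct and is exactly the standard $T_1$ characterization transported to the $s\lambda$-setting; every step is justified by facts already recorded in the preliminaries. Note that the paper itself does not supply a proof of this lemma---it merely cites \cite{AJ}---so there is no in-paper argument to compare against, but your write-up would serve perfectly well as the omitted proof.
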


\section{\bf \quad $ sg_\lambda $-closed-sets and $ s\lambda$-Hausdorff,
$ s\lambda $-compact, $ s\lambda$-regular,  $ s\lambda $-normal GTspaces}

In this section we define $ sg_\lambda $-closed set and discuss about the effects of $ s\lambda $-closed set on $ s\lambda$-Hausdorff, $ s\lambda $-compact, $ s\lambda$-regular  and $ s\lambda $-normal GTspaces. We investigate the relation among $ s\lambda T_0,s\lambda T_1,s\lambda T_2,s\lambda R_0 $ and $ s\lambda R_1 $ axioms.

\begin{definition}\label{7} (c.f.\cite{NL}). A subset $ D $ of $ Y $ is said to be a $sg_\lambda$-closed set  if  $ \overline{sD_\lambda}\subset V $ whenever $ D\subset V $ and $ V $ is $ s\lambda $-open. $D$ is called $sg_\lambda$-open  if $ Y - D $ is $s g_\lambda$-closed.
\end{definition}

\begin{theorem}\label{9}
A subset $ D $ of $ Y $ is $ sg_\lambda $-open if and only if $ P\subset sInt_\lambda (D) $ whenever $ P\subset D $ and $ P $ is $ s\lambda $-closed.
\end{theorem}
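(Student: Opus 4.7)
The plan is to prove this as a direct duality statement, translating the defining closed-set condition through complementation and then applying Lemma~\ref{8} to replace the complement of an $s\lambda$-closure with an $s\lambda$-interior.

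First I would unwind the definition. By Definition~\ref{7}, $D$ is $sg_\lambda$-open precisely when $Y-D$ is $sg_\lambda$-closed, i.e.\ whenever $Y-D\subset V$ with $V$ being $s\lambda$-open, we have $\overline{s(Y-D)_\lambda}\subset V$. I would then parametrise the $s\lambda$-open supersets of $Y-D$ by their complements: setting $P=Y-V$, the conditions ``$V$ is $s\lambda$-open'' and ``$Y-D\subset V$'' are equivalent to ``$P$ is $s\lambda$-closed'' and ``$P\subset D$''. This turns the universal quantifier over $V$ into a universal quantifier over such $P$.

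Next, applying complements to the inclusion $\overline{s(Y-D)_\lambda}\subset V$ gives $Y-V \subset Y-\overline{s(Y-D)_\lambda}$. By Lemma~\ref{8}, $Y-\overline{s(Y-D)_\lambda}=sInt_\lambda(D)$, so this inclusion is just $P\subset sInt_\lambda(D)$. Thus the $sg_\lambda$-closedness of $Y-D$ is logically equivalent to the condition ``$P\subset sInt_\lambda(D)$ whenever $P\subset D$ and $P$ is $s\lambda$-closed'', which is exactly the claimed characterisation of $sg_\lambda$-openness of $D$. The argument is reversible since taking complements of sets (and of inclusions) is an involution and Lemma~\ref{8} is an equality.

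There is no real obstacle: the only subtlety is keeping track of which sets are $s\lambda$-open versus $s\lambda$-closed when passing to complements, and making sure the quantifier over $V$ transforms correctly into the quantifier over $P$. Once Lemma~\ref{8} is invoked, both implications follow simultaneously from the equivalence of the two inclusions, so I would write the proof as a single ``if and only if'' chain rather than splitting into two directions.
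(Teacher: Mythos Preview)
Your argument is correct and is essentially the same as the paper's: both proofs pass to the complement $Y-D$, use the substitution $P=Y-V$ to convert between $s\lambda$-open supersets of $Y-D$ and $s\lambda$-closed subsets of $D$, and invoke Lemma~\ref{8} to rewrite $Y-\overline{s(Y-D)_\lambda}$ as $sInt_\lambda(D)$. The only cosmetic difference is that the paper splits the argument into two explicit directions, whereas you package it as a single reversible chain of equivalences.
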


\begin{proof}
Assume $ D $ is $ sg_\lambda $-open and $ P $ is $ s\lambda $-closed such that $ P\subset D $, so $ Y-D $ is $ sg_\lambda $-closed and $ Y-D\subset Y-P $, a $ s\lambda $-open set. Then $ \overline{s(Y -D)_\lambda}\subset (Y-P)\Rightarrow P\subset Y-\overline{s(Y -D)_\lambda}=sInt_\lambda(D) $. Conversely, let $ P\subset D,  P $ is $ s\lambda $-closed, then $ P\subset sInt_\lambda(D)=Y-\overline{s(Y -D)_\lambda}\Rightarrow \overline{s(Y -D)_\lambda}\subset Y-P\Rightarrow Y-D $ is $ sg_\lambda $-closed $ \Rightarrow  D $ is $ sg_\lambda $-open.
\end{proof}

\begin{remark}\label{10} (1):
Clearly in a GTspace $ (Y,\gamma) $,  every $ s\gamma $-closed set $ A $ as well as every set $ B $ where $ B=sker_\gamma(B) $ are $ s \lambda$-closed sets; also $ s \lambda$-closed set is $ sg_ \lambda$-closed. But converse may not be true as shown in Examples \ref{14} (i),   (ii) and  (iii) \cite{AJ}. Similarly, every $ s \gamma $-open set is $ s \lambda$-open and every  $ s \lambda$-open set is $ sg_ \lambda$-open. It also reveals from Example \ref{14} (ii) \cite{AJ}, that intersection of two $ s\lambda $-open sets may not be $ s\lambda $-open.
\end{remark}

\begin{definition}\label{12}(c.f.\cite{CG, LD}).
A GTspace $ (Y,\gamma) $ is said to be

(i)  $s\lambda R_0 $ if every $ s\lambda $-open set contains $ s\lambda $-closure of each of its singleton;

(ii)  $s\lambda R_1  $ if for $ p, q\in Y $ with $ p\not\in \overline{s\{q\}_\lambda} $, there exist  $ s\lambda $-open sets $ E, F$ such $ p\in E, q\in F, E\cap F=\emptyset $;

(iii)  $ s\lambda$-Hausdorff GTspace if for any pair of distinct points $ p, q\in Y $, there exist $ s\lambda $-open sets $ E, F $ containing $ p $ and $ q $ respectively such that $ E\cap F=\emptyset$.
\end{definition}
  
Obviously every $ s\lambda$-Hausdorff GTspace is $ s\lambda T_1$ GTspace.

\begin{theorem}\label{13}
If $ (Y, \gamma) $ is $s\lambda R_1  $ GTspace, then it is $s\lambda R_0 $.
\end{theorem}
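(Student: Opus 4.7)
The plan is to unpack both $s\lambda R_0$ and $s\lambda R_1$ purely through the characterization of $s\lambda$-closure via $s\lambda$-open sets (Definition~\ref{4} applied to $s\lambda$), and connect them by a contrapositive argument on complements of $s\lambda$-open sets.

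Let $(Y,\gamma)$ be $s\lambda R_1$. To establish $s\lambda R_0$, I would fix an arbitrary $s\lambda$-open set $U$ and a point $p\in U$, and try to show $\overline{s\{p\}_\lambda}\subset U$. The natural way is to take any $q\in Y\setminus U$ and show $q\notin \overline{s\{p\}_\lambda}$.

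The first key step is the observation that $p\notin \overline{s\{q\}_\lambda}$. Indeed, $U$ is an $s\lambda$-open neighbourhood of $p$ with $U\cap\{q\}=\emptyset$ (since $q\notin U$), so by the characterization of $s\lambda$-adherence points recorded right after Definition~\ref{4}, $p$ cannot be $s\lambda$-adherent to $\{q\}$. Once this is in hand, the $s\lambda R_1$ hypothesis applied to the pair $(p,q)$ yields disjoint $s\lambda$-open sets $E,F$ with $p\in E$ and $q\in F$. The second key step is to read off from $E\cap F=\emptyset$ that $p\notin F$; thus $F$ is an $s\lambda$-open neighbourhood of $q$ not meeting $\{p\}$, which gives $q\notin \overline{s\{p\}_\lambda}$ by the same adherence criterion. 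Since $q\in Y\setminus U$ was arbitrary, $\overline{s\{p\}_\lambda}\subset U$, completing the proof.

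There is no real obstacle here; the whole argument is a symmetric reshuffling of the separation condition. The only point that requires a moment of care is recognizing that the $s\lambda$-adherence characterization goes through verbatim for singletons and produces the contrapositive bridge $q\notin U \Rightarrow p\notin\overline{s\{q\}_\lambda}$, which is precisely what turns the ``point vs.\ closure of a point'' formulation of $s\lambda R_1$ into the ``singleton-closure inside any $s\lambda$-open neighbourhood'' formulation of $s\lambda R_0$.
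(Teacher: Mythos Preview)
Your proof is correct and follows essentially the same route as the paper: fix an $s\lambda$-open $U$ and $p\in U$, take $q\notin U$, observe $p\notin\overline{s\{q\}_\lambda}$, invoke $s\lambda R_1$ to get disjoint $s\lambda$-open $E\ni p$, $F\ni q$, and conclude $q\notin\overline{s\{p\}_\lambda}$. The only cosmetic difference is that the paper phrases the two closure facts via the containment $\overline{s\{q\}_\lambda}\subset Y\setminus U$ and $\overline{s\{p\}_\lambda}\subset Y\setminus F$ (using that complements of $s\lambda$-open sets are $s\lambda$-closed), whereas you use the adherence-point characterization directly; the content is identical.
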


\begin{proof}
Let $ G $ be a $ s\lambda $-open set of a $s\lambda R_1  $ GTspace  $ (Y, \gamma) $ and let $ p\in G, q\in Y-G $,  a $ s\lambda $-closed set. So $ \overline{s\{q\}_\lambda}\subset Y-G\Rightarrow p\not\in \overline{s\{q\}_\lambda} $. Since $ (Y, \gamma) $ is a $s\lambda R_1  $ GTspace, there exist $ s\lambda $-open sets $ E,F $ such that $ p\in E, q\in F; E\cap F=\emptyset $, hence $ p\in E\subset Y-F $, a $ s\lambda $-closed set. This implies that $ \overline{s\{p\}_\lambda}\subset Y-F\Rightarrow F\subset Y- \overline{s\{p\}_\lambda}\Rightarrow q\in F\subset  Y- \overline{s\{p\}_\lambda} $. Since $ q $ has been choosen arbitrarily in $ Y-G $, $ Y-G\subset Y- \overline{s\{p\}_\lambda} \Rightarrow \overline{s\{p\}_\lambda} \subset G$. This concludes $ (Y,\gamma) $ is a $ s\lambda R_0 $ GTspace.
\end{proof}

The definitions  of $ s\lambda$-regular, $ s\lambda$-normal and $ s\lambda$-completely normal GTspaces what we have taken here following \cite{SS} are somewhat different from usual definitions. We shall see in the sequel how  topological properties behave with these definitions in GTspaces. 

\begin{definition}\label{14}(c.f.\cite{SS}).
A GTspace $ (Y,\gamma) $ is said to be $ s\lambda$-regular if for any $ s\lambda $-closed set $ A $ of $ Y $ and a point $ p\in Y, p\not\in A $, there exist $ s\lambda $-open sets $  E,F $ containing $ A $ and $ \{p\} $ respectively such that $ \overline{sE_\lambda}\cap\overline{sF_\lambda}=\emptyset$.
\end{definition}

\begin{theorem}\label{15}
Let $ (Y,\gamma) $ be a GTspace, then the  following are equivalent 

(1) $ (Y,\gamma) $ is $ s\lambda$-regular.

(2) For each point $ y\in Y $ and for each $ s\lambda $-open set $ G $ containing $ y $, there exists a $ s\lambda $-open set $ E $ such that $ y\in E\subset \overline{sE_\lambda}\subset G $.

(3) For each $ s\lambda $-closed set $ P $ of $ Y, \quad\bigcap \{\overline{sF_\lambda}: P\subset F, F$ is $ s\lambda $-open\}$=P $.

(4) For each subset $ D\subset Y $ and for each $ s\lambda $-open set $ E $ such that $ D\cap E \not=\emptyset $, there exists a $ s\lambda $-open set $ F $ such that $ D\cap F \not=\emptyset $ and $ \overline{sF_\lambda}\subset E $.

(5) For each non-empty subset $ D\subset Y $ and for each $ s\lambda $-closed set $ P\subset Y $ such that $ D\cap P=\emptyset $, there exist $ s\lambda $-open sets $ M,N $ such that $ D\cap M \not=\emptyset, P\subset N $ and $ M\cap N=\emptyset $.

(6) For each $ s\lambda $-closed set $ P\subset Y $ and  $ y\in Y, y\not\in P $, there exist $ s\lambda $-open set $ E $ and $ sg_\lambda $-open set $ F $ such that $ y\in E, P\subset F $ and $ E\cap F=\emptyset $.

(7) For each $ D\subset Y $ and for each $ s\lambda $-closed set $ P\subset Y $ such that $ D\cap P=\emptyset $, there exist $ s\lambda $-open set $ E $ and $ sg_\lambda $-open set $ F $ such that $ D\cap E \not=\emptyset, P\subset F $ and $ E\cap F=\emptyset$. 

(8) For each $ s\lambda $-closed set $ P $ of $ Y $ and a point $ y\in Y, y\not\in P $, there exist $ s\lambda $-open sets $ E,F $ containing $ \{y\} $ and $ P $ respectively such that $ E\cap F=\emptyset$.
\end{theorem}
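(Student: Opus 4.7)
The plan is to prove the chain
$(1)\Rightarrow(2)\Rightarrow(3)\Rightarrow(4)\Rightarrow(5)\Rightarrow(7)\Rightarrow(6)\Rightarrow(8)\Rightarrow(2)$
together with a separate argument $(2)\Rightarrow(1)$. The chain makes conditions $(2)$--$(8)$ equivalent, and the remaining implication closes the loop through $(1)$. All the steps except $(6)\Rightarrow(8)$ and $(2)\Rightarrow(1)$ are routine complementation arguments that hinge on the $s\lambda$-versions of Theorem~\ref{5} and Lemma~\ref{8}, together with the observation from Remark~\ref{10} that every $s\lambda$-open set is $sg_\lambda$-open.

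For the routine implications I would argue as follows. $(1)\Rightarrow(2)$: apply $(1)$ to the $s\lambda$-closed set $Y-G$ and the point $y\in G$. $(2)\Rightarrow(3)$: given $y\notin P$, use $(2)$ inside $Y-P$ to produce an $s\lambda$-open $E$ with $\overline{sE_\lambda}\subset Y-P$, so $F=Y-\overline{sE_\lambda}$ is an $s\lambda$-open neighbourhood of $P$ whose $s\lambda$-closure misses $y$ (because $E$ itself is an $s\lambda$-open set around $y$ disjoint from $F$). $(3)\Rightarrow(4)$: fix $y\in D\cap E$; by $(3)$ choose $s\lambda$-open $G\supset Y-E$ with $y\notin\overline{sG_\lambda}$; then Lemma~\ref{8} shows $F=Y-\overline{sG_\lambda}=sInt_\lambda(Y-G)$ is $s\lambda$-open, meets $D$ at $y$, and satisfies $\overline{sF_\lambda}\subset Y-G\subset E$. $(4)\Rightarrow(5)$: apply $(4)$ to $E=Y-P$ and take $N=Y-\overline{sF_\lambda}$. $(5)\Rightarrow(7)$ and $(7)\Rightarrow(6)$ are immediate: every $s\lambda$-open set is $sg_\lambda$-open, and $(6)$ is $(7)$ specialised to $D=\{y\}$. $(8)\Rightarrow(2)$ is the same complementation as $(1)\Rightarrow(2)$ without any closure refinement. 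For $(6)\Rightarrow(8)$ I would invoke Theorem~\ref{9}: since $F$ is $sg_\lambda$-open with $P\subset F$ and $P$ is $s\lambda$-closed, $P\subset sInt_\lambda(F)$, so $sInt_\lambda(F)$ is an $s\lambda$-open set containing $P$, contained in $F$, and therefore still disjoint from $E$.

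The genuinely delicate step is $(2)\Rightarrow(1)$, where one must upgrade mere disjointness of two $s\lambda$-open sets to disjointness of their $s\lambda$-closures. The plan is to apply $(2)$ twice. First, for $p\in Y-A$, obtain an $s\lambda$-open $V_1$ with $p\in V_1\subset\overline{sV_{1,\lambda}}\subset Y-A$; then, for $p\in V_1$, obtain an $s\lambda$-open $V_2$ with $p\in V_2\subset\overline{sV_{2,\lambda}}\subset V_1$. Set $E=V_2$ and $F=Y-\overline{sV_{1,\lambda}}$, which is $s\lambda$-open (since $\overline{sV_{1,\lambda}}$ is $s\lambda$-closed by Theorem~\ref{5}) and contains $A$. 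Clearly $\overline{sE_\lambda}=\overline{sV_{2,\lambda}}\subset V_1$. For the other closure, I would observe that $V_1$ is itself an $s\lambda$-open set with $V_1\cap F\subset\overline{sV_{1,\lambda}}\cap(Y-\overline{sV_{1,\lambda}})=\emptyset$, so for every $x\in V_1$ the set $V_1$ witnesses $x\notin\overline{sF_\lambda}$; hence $\overline{sF_\lambda}\subset Y-V_1$ and therefore $\overline{sE_\lambda}\cap\overline{sF_\lambda}\subset V_1\cap(Y-V_1)=\emptyset$, which is $(1)$.

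The main obstacle, then, is the double-application trick in $(2)\Rightarrow(1)$; apart from that, the only bookkeeping I expect to require care is keeping straight which closure operator and which flavour of openness ($s\gamma$, $s\lambda$, $sg_\lambda$) is in play at each stage, since the arguments swap freely between them through complementation.
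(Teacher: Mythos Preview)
Your proposal is correct and follows essentially the same approach as the paper, including the identical double-application trick for $(2)\Rightarrow(1)$ and the use of Theorem~\ref{9} to pass from $sg_\lambda$-open to $s\lambda$-open in the $(6)$--$(8)$ block. The only cosmetic difference is the ordering of the loop through $(5),(6),(7),(8)$: the paper closes $(5)\Rightarrow(2)$ directly and then runs $(1)\Rightarrow(6)\Rightarrow(7)\Rightarrow(8)\Rightarrow(2)$, whereas you thread $(5)\Rightarrow(7)\Rightarrow(6)\Rightarrow(8)\Rightarrow(2)$, but the individual arguments are the same.
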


\begin{proof}
(1) $\Rightarrow $ (2): Let $ G $ be a $ s\lambda $-open set of   a $ s\lambda$-regular GTspace $ (Y,\gamma) $ and $ y\in G $. Then $ y \not\in Y-G $, a $ s\lambda $-closed set. So there exist $ s\lambda $-open sets $ E,F $ containing $ \{y\} $ and $ Y-G $ respectively such that $ \overline{sE_\lambda}\cap\overline{sF_\lambda}=\emptyset$, hence $ E\cap F=\emptyset \Rightarrow E\subset Y-F $, a $ s\lambda $-closed set and $ y\in E\subset \overline{sE_\lambda}\subset \overline{s(Y-F)_\lambda}=Y-F \subset G $.

(2) $ \Rightarrow $ (1):  Suppose the conditions hold and $ P $ is a $ s\lambda $-closed set, $ y\in Y, y\not\in P $, then $ y\in Y-P $, a $ s\lambda $-open set. By assumption, there exists a $ s\lambda $-open set $ E $ such that $ y\in E\subset \overline{sE_\lambda}\subset Y-P \Rightarrow P\subset Y- \overline{sE_\lambda}$, a $ s\lambda $-open set $ \Rightarrow E\cap (Y- \overline{sE_\lambda})=\emptyset$. Since $ y\in E $, a $ s\lambda $-open set, by assumption, there exists a $ s\lambda $-open set $ K $ such that $ y\in K\subset \overline{sK_\lambda}\subset E $ and so $ \overline{sK_\lambda}\cap (Y- \overline{sE_\lambda})=\emptyset$. If $ \overline{sK_\lambda}\cap \overline{s(Y- \overline{sE_\lambda})_\lambda}\not=\emptyset$, then there exists $ x\in  \overline{sK_\lambda}\cap \overline{s(Y- \overline{sE_\lambda})_\lambda}$. So $ x\in \overline{sK_\lambda}\subset E $ and $ x\in \overline{s(Y- \overline{sE_\lambda})_\lambda} $. But $ E $ is a $ s\lambda $-open set containing $ x $ which does not intersect $ (Y- \overline{sE_\lambda}) $ (since $ E\cap (Y- \overline{sE_\lambda})=\emptyset $). Thus we must have $ \overline{sK_\lambda}\cap \overline{s(Y- \overline{sE_\lambda})_\lambda}=\emptyset$. Hence the GTspace is $ s\lambda $-regular.

(2) $ \Rightarrow $ (3): Let $ P $ be any $ s\lambda $-closed set of $ Y $, then $ P\subset \bigcap \{\overline{sF_\lambda}: P\subset F, F$ is $ s\lambda $-open\}.

Conversely, let $ y\in Y-P $, a $ s\lambda $-open set, so $ y\not\in P $, a $ s\lambda $-closed set. Then by (2) there exists a $   s\lambda $-open set $ E $ such that $ y\in E\subset \overline{sE_\lambda}\subset Y-P \Rightarrow P\subset Y- \overline{sE_\lambda}=F$ (say), a $   s\lambda $-open set. So $ E\cap F=\emptyset $. Hence $ F\subset Y-E \Rightarrow \overline{sF_\lambda}\subset \overline{s(Y-E)_\lambda}=Y-E$. Since $ y\not\in Y-E, y\not\in \overline{sF_\lambda} $. Then $ y\not\in \bigcap \{\overline{sF_\lambda}: P\subset F, F $ is $ s\lambda $-open\}. Hence, $ P\supset \bigcap \{\overline{sF_\lambda}: P\subset F, F$ is $ s\lambda $-open\}. Hence the result follows.

(3) $\Rightarrow$ (4): Assume $ D $ and  a $ s\lambda $-open set $ E $ are subsets of $ Y $ and $ D\cap E\not=\emptyset $. So $ y\in D\cap E
\Rightarrow y\not\in Y-E $, a $ s\lambda $-closed set, hence by (3), there exists a $ s\lambda $-open set $ K $ such that $ Y-E\subset K $ and $ y\not\in \overline{sK_\lambda} $. Put  $ F= Y-\overline{sK_\lambda} $, a $ s\lambda $-open set containing $ y $ and hence $ D\cap F\not=\emptyset $. Now $ F\subset Y-K \Rightarrow \overline{sF_\lambda}\subset \overline{s(Y-K)_\lambda} =Y-K\subset E$. 

(4) $\Rightarrow$ (5): Let $ P $ be a $ s\lambda $-closed set and $ D $ be a non-empty set of $ Y $ such that $ D\cap P=\emptyset $. So $ Y-P $ is a $ s\lambda $-open set and $ D\cap (Y-P)\not=\emptyset $. Then by (4), there exists a $ s\lambda $-open set $ M $ such that $ D\cap M\not=\emptyset $ and $ \overline{sM_\lambda}\subset Y-P$. Put $ N=Y- \overline{sM_\lambda}$, then $ N $ is a $ s\lambda $-open set such that $ P\subset Y- \overline{sM_\lambda}=N $ and $ M\cap N=\emptyset $.

(5) $ \Rightarrow $ (2): Suppose the conditions hold and $ y\in G \subset Y, G $ is a $ s\lambda $-open set, then $ Y-G $ is a $ s\lambda $-closed set and $ (Y-G)\cap \{y\}=\emptyset $. By supposition, there exist $ s\lambda $-open sets $ M, N $ such that $ y\in M, Y-G\subset N $ and $ M\cap N=\emptyset $. So $ M\subset Y-N \Rightarrow \overline{sM_\lambda}\subset \overline{s(Y-N)_\lambda}=Y-N $. Hence $ y\in M\subset \overline{sM_\lambda}\subset Y-N \subset G $.

(1) $ \Rightarrow $ (6): Suppose $ P $  is a $ s\lambda $-closed set of $ Y $ and  $ y\in Y, y\not\in P $. Then by regularity criterion, there exist $ s\lambda $-open sets $ E,F $ such that $ y\in E $ and $ P\subset F $ and $ \overline{sE_\lambda}\cap \overline{sF_\lambda}=\emptyset $, so $ E\cap F=\emptyset $. Since $ s\lambda $-open set is $ sg_\lambda $-open,  the result follows.

(6) $ \Rightarrow $ (7): Suppose $ P $ is a $ s\lambda $-closed set of $ Y $ and $ y\in D\subset Y $ with $ D\cap P=\emptyset $. Then $ y\not\in P $ and hence by (6), there exist a $ s\lambda $-open set $ E $ and a $ sg_\lambda $-open set $ F $ such that $ y\in E, P\subset F $ and $ E\cap F=\emptyset $. So $ D\cap E\not=\emptyset $. 

(7) $ \Rightarrow $ (8): Let $ P $ be a $ s\lambda $-closed set of $ Y $ and $ y\in Y $ such that $ y\not\in P $. Since $ \{y\}\cap P=\emptyset $, by (7), there exist $ s\lambda $-open set $ E $ and $ sg_\lambda $-open set $ H $ such that $ y\in E, P\subset H $ and $ E\cap H=\emptyset $. Then by Theorem \ref{9}, $ P\subset sInt_\lambda(H)=F $ (say), a $ s\lambda $-open set and so $ E\cap F =\emptyset$.  

(8) $ \Rightarrow $ (2): Suppose the conditions hold and  $ G $ is a $ s\lambda $-open set of $ Y $ and $ y\in G $. Then $ y\not\in Y-G $, a $ s\lambda $-closed set. By supposition, there exist $ s\lambda $-open sets $ E,F $ such that $ y\in E, Y-G\subset F $ and $ E\cap F=\emptyset $. So $ E\subset Y-F\Rightarrow \overline{sE_\lambda}\subset \overline{s(Y-F)_\lambda}=Y-F $. Hence $ y\in E\subset \overline{sE_\lambda}\subset Y-F \subset G $. 
\end{proof}

\begin{theorem}\label{16}
Every  $ s\lambda$-regular GTspace $ (Y,\gamma) $ is $ s\lambda R_1 $.
\end{theorem}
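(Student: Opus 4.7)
The plan is to reduce the $s\lambda R_1$ condition directly to the $s\lambda$-regularity hypothesis by replacing the singleton $\{q\}$ with its $s\lambda$-closure, which plays the role of the $s\lambda$-closed set in Definition \ref{14}.

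First I would fix points $p, q \in Y$ with $p \notin \overline{s\{q\}_\lambda}$. The key observation is that $\overline{s\{q\}_\lambda}$ is itself $s\lambda$-closed; this is guaranteed by part (2) of Theorem \ref{5}, which the last sentence of that theorem asserts is also valid for the $s\lambda$-closure operator. Thus we have a $s\lambda$-closed set $A := \overline{s\{q\}_\lambda}$ and a point $p \notin A$.

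Next I would apply the hypothesis that $(Y,\gamma)$ is $s\lambda$-regular to the pair $(A, p)$. Definition \ref{14} yields $s\lambda$-open sets $U, V$ with $A \subseteq U$ and $p \in V$ satisfying $\overline{sU_\lambda} \cap \overline{sV_\lambda} = \emptyset$. In particular $U \cap V \subseteq \overline{sU_\lambda} \cap \overline{sV_\lambda} = \emptyset$ (using the inclusion $D \subseteq \overline{sD_\lambda}$ from Theorem \ref{5} (4)). Since $q \in \{q\} \subseteq \overline{s\{q\}_\lambda} = A \subseteq U$, the set $U$ is a $s\lambda$-open neighbourhood of $q$ and $V$ is a $s\lambda$-open neighbourhood of $p$, and the two are disjoint. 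Setting $E = V$ and $F = U$ gives precisely the configuration required by Definition \ref{12} (ii), so $(Y,\gamma)$ is $s\lambda R_1$.

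There is no genuine obstacle here; the proof is a one-step application of the regularity axiom once one notices that $\overline{s\{q\}_\lambda}$ is $s\lambda$-closed. The only point that deserves explicit mention is invoking Theorem \ref{5} (and its stated extension to the $s\lambda$-setting) to justify both the closedness of $\overline{s\{q\}_\lambda}$ and the inclusion $U \cap V \subseteq \overline{sU_\lambda} \cap \overline{sV_\lambda}$ used to deduce disjointness of the neighbourhoods.
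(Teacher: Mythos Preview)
Your proof is correct and follows essentially the same route as the paper: both arguments observe that $\overline{s\{q\}_\lambda}$ is $s\lambda$-closed, apply Definition \ref{14} to the pair $(\overline{s\{q\}_\lambda}, p)$, and then pass from $\overline{sU_\lambda}\cap\overline{sV_\lambda}=\emptyset$ to $U\cap V=\emptyset$. Your version is slightly more explicit in citing Theorem \ref{5} for the closedness and the inclusion $U\cap V\subseteq \overline{sU_\lambda}\cap\overline{sV_\lambda}$, but the underlying argument is identical.
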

\begin{proof}
Let $ (Y,\gamma) $ be a $ s\lambda$-regular GTspace and $ a,b\in Y $ such that $ a\not\in \overline{s\{b\}_\lambda} $. As $ \overline{s\{b\}_\lambda} $ is  $ s\lambda $-closed,   there exist $ s\lambda$-open sets $ E,F $ containing respectively $ \{a\} $ and $ \overline{s\{b\}_\lambda} $ such that $ \overline{sE_\lambda}\cap  \overline{sF_\lambda}=\emptyset$. Then $ E\cap F=\emptyset $ and so $ (Y,\gamma) $ is a $ s\lambda R_1$  GTspace by Definition \ref{12} (ii).
\end{proof}

\begin{theorem}\label{18}
A GTspace $( Y, \gamma) $ is $ s\lambda T_2 $  if and only if it is $ s\lambda R_1$ and $ s\lambda T_1 $. 
\end{theorem}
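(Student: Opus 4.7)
The plan is to handle the two implications separately, with both being short consequences of the definitions plus Lemma \ref{17}.

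For the forward direction, assume $(Y,\gamma)$ is $s\lambda T_2$. First I would show $s\lambda T_1$: given distinct $p,q\in Y$, the $s\lambda T_2$ property yields disjoint $s\lambda$-open sets separating them, which in particular gives the $s\lambda T_1$ separation. Then I would show $s\lambda R_1$: suppose $p,q\in Y$ with $p\notin \overline{s\{q\}_\lambda}$. Since $q\in \overline{s\{q\}_\lambda}$ (by Theorem \ref{5}(4) applied to $s\lambda$-closure), we automatically have $p\neq q$, and then $s\lambda T_2$ provides disjoint $s\lambda$-open neighbourhoods of $p$ and $q$, which is exactly what Definition \ref{12}(ii) demands.

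For the reverse direction, assume $(Y,\gamma)$ is both $s\lambda R_1$ and $s\lambda T_1$. Take any two distinct points $p,q\in Y$. By Lemma \ref{17}, every singleton is $s\lambda$-closed, so in particular $\overline{s\{q\}_\lambda}=\{q\}$. Since $p\neq q$, this gives $p\notin \overline{s\{q\}_\lambda}$, and applying $s\lambda R_1$ produces disjoint $s\lambda$-open sets $E,F$ with $p\in E$ and $q\in F$. This verifies $s\lambda T_2$ directly from Definition \ref{12}(iii).

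There is no real obstacle here; the only subtlety worth flagging explicitly is that the reverse direction genuinely uses $s\lambda T_1$ (not just $s\lambda T_0$) in order to collapse $\overline{s\{q\}_\lambda}$ to $\{q\}$, which is what lets us convert the condition $p\neq q$ into the hypothesis $p\notin \overline{s\{q\}_\lambda}$ needed by $s\lambda R_1$. I would make sure to cite Lemma \ref{17} at exactly that step so that the role of $s\lambda T_1$ is transparent.
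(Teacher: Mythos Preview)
Your proposal is correct and follows essentially the same approach as the paper's proof: both directions hinge on Lemma~\ref{17} to identify $\overline{s\{q\}_\lambda}$ with $\{q\}$ in the $s\lambda T_1$ setting, and then apply the definitions directly. The only cosmetic difference is that in the forward $s\lambda R_1$ step you deduce $p\neq q$ from $q\in\overline{s\{q\}_\lambda}$ (via Theorem~\ref{5}(4)), whereas the paper first invokes $s\lambda T_1$ to write $\overline{s\{q\}_\lambda}=\{q\}$; your route is marginally cleaner but not substantively different.
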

\begin{proof}
Obviously a $ s\lambda T_2 $ GTspace is  $ s\lambda T_1 $. Let $ a,b\in Y $ and $ a\not\in \overline{s\{b\}_\lambda}=\{b\} $, since the GTspace is $ s\lambda T_1 $. So $ a\not=b $. Since $ (Y,\gamma) $ is $ s\lambda T_2 $, there exist $ s\lambda $-open sets $ E,F $ such that $ a\in E, b\in F, E\cap F=\emptyset $. Hence $ (Y,\gamma) $ is also $ s\lambda R_1$ GTspace.

Conversely, let $(Y,\gamma) $ be $ s\lambda R_1$ and $ s\lambda T_1$ GTspace. So each singleton of $ Y $ is $ s\lambda $-closed. Now suppose  $ a,b\in Y, a\not= b $. Then $ a\not\in \{b\} =\overline{s\{b\}_\lambda} $ which implies that there exist $ s\lambda $-open sets $ E,F $ such that $ a\in E, b\in F, E\cap F=\emptyset $. Hence $ (Y,\gamma) $ is $ s\lambda T_2$ GTspace.
\end{proof}

We require the following definitions for $ s\lambda $-compact and $ s\lambda $-paracompact GTspaces. We assume the definitions of $ s\lambda $-Lindel$ \ddot{o} $f and $ s\lambda $-countably compact GTspaces which are parallel to that of a topological space just taking $ s\lambda $-open sets in stead of open sets.

\begin{definition}
(c.f.\cite{JRM})\label{19}. (1): A cover $ \mathcal{B} $  of $ Y $ is said to be a refinement of a cover $ \mathcal{F} $ of $ Y $ if for each element $ B $  of $ \mathcal{B} $ there is an element $ F $ of $ \mathcal{F} $ containing $ B $.

(2): A cover of $ Y $ is said to be locally finite if evey point of $ Y $ has a $ s\lambda $-neighbourhood that intersects only finitely many members of the cover.   
\end{definition}

\begin{definition}\label{20} (c.f.\cite{JRM}). (1): A GTspace
$ (Y,\gamma) $ is called $ s\lambda$-compact  if  every $ s\lambda $-open cover of $ Y $ has a finite subcover; equivalently, if  every $ s\lambda $-open cover of $ Y $ has a finite $ s\lambda $-open refinement.

(2): A GTspace $ (Y,\gamma) $ is $ s\lambda $-paracompact if every $ s\lambda $-open cover of $ Y $ has a locally finite  $ s\lambda $-open refinement that covers $ Y $.
\end{definition}

Clearly $ s\lambda$-compact GTspace is $ s\lambda$-paracompact.\

\begin{theorem}\label{21}
A GTspace $ (Y,\gamma) $ is $ s\lambda$-compact  if and only if for every collection of $ s\lambda $-closed sets $ \{P_\nu: \nu\in \Lambda, \Lambda $ is an index set\} in $ (Y,\gamma) $ possessing the finite intersection property, the intersection $ \bigcap \{P_\nu: \nu\in \Lambda\}$ of the entire collection is non-empty.

Proof is parallel to that of a topological space so is omitted.
\end{theorem}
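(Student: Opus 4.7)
The plan is to adapt the classical proof that compactness is equivalent to the finite intersection property (FIP) for closed sets, working entirely with $s\lambda$-open and $s\lambda$-closed sets. The only facts needed beyond set-theoretic manipulation are that the complement of an $s\lambda$-open set is $s\lambda$-closed (and vice versa), which is immediate from Definition \ref{3A}, and De Morgan's laws, which are purely set-theoretic and require no topological structure.

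For the forward implication, I assume $(Y,\gamma)$ is $s\lambda$-compact and take any collection $\{P_\nu : \nu \in \Lambda\}$ of $s\lambda$-closed sets with the FIP. Arguing by contradiction, I suppose $\bigcap_{\nu\in\Lambda} P_\nu = \emptyset$. Taking complements, the family $\{Y - P_\nu : \nu \in \Lambda\}$ is a collection of $s\lambda$-open sets whose union equals $Y - \bigcap P_\nu = Y$, hence an $s\lambda$-open cover. By $s\lambda$-compactness there is a finite subcover $\{Y - P_{\nu_1}, \ldots, Y - P_{\nu_n}\}$; taking complements again yields $\bigcap_{i=1}^{n} P_{\nu_i} = \emptyset$, directly contradicting the FIP.

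For the reverse implication, I assume that every family of $s\lambda$-closed sets with the FIP has non-empty intersection, and I take an arbitrary $s\lambda$-open cover $\{G_\nu : \nu \in \Lambda\}$ of $Y$. I suppose for contradiction that no finite subfamily covers $Y$, so for every finite index set $\{\nu_1, \ldots, \nu_n\} \subset \Lambda$ one has $\bigcup_{i=1}^{n} G_{\nu_i} \subsetneq Y$, equivalently $\bigcap_{i=1}^{n} (Y - G_{\nu_i}) \neq \emptyset$. Thus $\{Y - G_\nu : \nu \in \Lambda\}$ is a family of $s\lambda$-closed sets with the FIP, so by hypothesis $\bigcap_{\nu \in \Lambda} (Y - G_\nu) \neq \emptyset$. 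But this set is $Y - \bigcup G_\nu$, contradicting that $\{G_\nu\}$ covers $Y$. Hence some finite subcover exists.

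There is no genuine obstacle here: the argument is symmetric under complementation and requires no separation, regularity, or intersection property of the $s\lambda$-open sets themselves (in particular it does not need the collection of $s\lambda$-open sets to be closed under finite intersection, which the paper already notes may fail in Remark \ref{10}). The only subtle point to mention explicitly is the equivalent formulation of $s\lambda$-compactness via finite $s\lambda$-open refinements from Definition \ref{20}(1), which is not used here since the subcover formulation suffices. For this reason, the author simply writes that the proof is parallel to the topological case and omits it.
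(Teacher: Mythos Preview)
Your proof is correct and is precisely the standard complementation/De Morgan argument the paper alludes to when it says the proof ``is parallel to that of a topological space so is omitted.'' There is nothing to add; your write-up fills in exactly the details the authors chose to suppress.
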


\begin{theorem}\label{22}
Any $ s\lambda $-closed subset of $ s\lambda $-compact GTspace $ (Y,\gamma) $ is $ s\lambda $-compact.
\end{theorem}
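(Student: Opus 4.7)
The plan is to mimic the classical topological argument that closed subsets of compact spaces are compact, relying only on the facts already established in the preliminaries: namely that the $s\lambda$-open sets form a generalized topology on $Y$ (so arbitrary unions of $s\lambda$-open sets are $s\lambda$-open), and that $A$ being $s\lambda$-closed is equivalent to $Y-A$ being $s\lambda$-open.

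More explicitly, I would start with an $s\lambda$-closed subset $A$ of the $s\lambda$-compact space $(Y,\gamma)$ and take an arbitrary $s\lambda$-open cover $\mathcal{U}=\{U_\alpha:\alpha\in \Lambda\}$ of $A$ by $s\lambda$-open sets of $Y$. The key idea is to enlarge $\mathcal{U}$ to a cover of the whole space by adjoining the single set $Y-A$. Since $A$ is $s\lambda$-closed, $Y-A$ is $s\lambda$-open, so $\mathcal{U}\cup\{Y-A\}$ is an $s\lambda$-open cover of $Y$.

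By $s\lambda$-compactness of $Y$ (Definition \ref{20}(1)), this enlarged cover admits a finite subcover. Discarding the set $Y-A$ from this finite subcover if it appears leaves a finite subfamily $\{U_{\alpha_1},\ldots,U_{\alpha_n}\}$ of $\mathcal{U}$ whose union still covers every point of $A$ (because no point of $A$ lies in $Y-A$). Hence $A$ admits a finite $s\lambda$-open subcover drawn from $\mathcal{U}$, which is exactly the $s\lambda$-compactness of $A$.

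There is no real obstacle in this argument; the only point one must be careful about is that $s\lambda$-compactness of a subset is being interpreted via $s\lambda$-open covers by sets of the ambient GTspace, and that the union operation used when extracting the finite subcover stays within the family of $s\lambda$-open sets, both of which are guaranteed by the remarks following Definition \ref{4}. Accordingly, the write-up should be very short, essentially one paragraph.
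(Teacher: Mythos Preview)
Your argument is correct and is the standard open-cover proof: adjoin the $s\lambda$-open complement $Y-A$ to the given cover, extract a finite subcover of $Y$, and drop $Y-A$. The paper, however, proceeds differently. It first equips $D$ with the subspace generalized topology $\gamma_D=\{D\cap V:V\in\gamma\}$ and then argues via the finite-intersection-property characterization of $s\lambda$-compactness (Theorem~\ref{21}): a family of $s\lambda$-closed sets in $(D,\gamma_D)$ with the FIP is, because $D$ is $s\lambda$-closed in $Y$, also a family of $s\lambda$-closed sets in $(Y,\gamma)$ with the FIP, and hence has nonempty intersection. Your route is more elementary in that it avoids both the subspace construction and the appeal to Theorem~\ref{21}, and it matches the ``ambient cover'' interpretation of compactness of a subset that the paper itself uses later (e.g.\ in the proofs of Theorems~\ref{24} and~\ref{27}). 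The paper's route, on the other hand, makes explicit that the conclusion holds for $(D,\gamma_D)$ as a GTspace in its own right, at the cost of needing the (unproved but plausible) fact that $s\lambda$-closed subsets of $(D,\gamma_D)$ are $s\lambda$-closed in $(Y,\gamma)$ when $D$ is.
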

\begin{proof}
Let $ D $ be a $ s\lambda $-closed subset of a $ s\lambda $-compact GTspace $ (Y,\gamma)$. Let $ \gamma_D=\{D\cap V: V\in \gamma\} $. Then clearly $ \gamma_D $ forms a generalized topology on $ D $. 

Now let $ \{P_\nu: \nu\in \Lambda\} $ be a family of $ s\lambda $-closed sets in $ (D,\gamma_D) $ possessing the finite intersection property.  As $ D $ is $ s\lambda $-closed in $ (Y,\gamma) $, the sets $ P_\nu $ are also $ s\lambda $-closed in $ (Y,\gamma) $. Thus $ \{P_\nu:\nu\in \Lambda\} $ is a family of $s\lambda$-closed sets in $ (Y,\gamma) $ possessing the finite intersection property. As $ (Y,\gamma) $ is $ s\lambda $-compact, it follows that $ \bigcap \{P_\nu:\nu\in \Lambda\} $ is non-empty and hence $ (D,\gamma_D) $ is $ s\lambda $-compact.
\end{proof}

\begin{theorem}\label{23}
Any $ s\lambda $-closed subset of a $ s\lambda $-paracompact (resp. $ s\lambda $-Lindel$ \ddot{o} $f, $ s\lambda $-countably compact)  GTspace is $ s\lambda $-paracompact (resp. $ s\lambda $-Lindel$ \ddot{o} $f, $ s\lambda $-countably compact).
\end{theorem}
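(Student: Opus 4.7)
The plan is to run three parallel closed-subspace arguments, one for each listed property, following the style of Theorem \ref{22} but working with open covers instead of finite-intersection families. Let $D$ be a $s\lambda$-closed subset of the ambient GTspace $(Y,\gamma)$ and put $\gamma_D=\{D\cap V:V\in\gamma\}$, the induced generalized topology on $D$. The observation implicitly used in Theorem \ref{22} is that a set is $s\lambda$-open (resp.\ $s\lambda$-closed) in $(D,\gamma_D)$ iff it is the trace on $D$ of some $s\lambda$-open (resp.\ $s\lambda$-closed) set of $(Y,\gamma)$; together with the hypothesis that $Y-D$ is itself $s\lambda$-open in $Y$ (because $D$ is $s\lambda$-closed there), this lets one lift any $s\lambda$-open cover $\{U_\alpha\}$ of $D$ to a $s\lambda$-open cover $\mathcal{G}=\{G_\alpha\}\cup\{Y-D\}$ of $Y$, where $U_\alpha=G_\alpha\cap D$.

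For the $s\lambda$-Lindel$ \ddot{o} $f case, apply $s\lambda$-Lindel$ \ddot{o} $fness of $(Y,\gamma)$ to $\mathcal{G}$, extract a countable subcover, intersect each member with $D$, and discard the (possibly present) trace of $Y-D$; the result is a countable $s\lambda$-open subcover of $\{U_\alpha\}$. The $s\lambda$-countably compact case is identical except that one starts with a countable $s\lambda$-open cover of $D$, so the lifted cover $\mathcal{G}$ is countable, and $s\lambda$-countable compactness of $Y$ delivers a finite subcover that restricts back to a finite $s\lambda$-open subcover of $D$.

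For the $s\lambda$-paracompact case, apply $s\lambda$-paracompactness of $Y$ to $\mathcal{G}$ to obtain a locally finite $s\lambda$-open refinement $\mathcal{V}$ of $\mathcal{G}$ covering $Y$. Set $\mathcal{W}=\{V\cap D:V\in\mathcal{V},\ V\cap D\neq\emptyset\}$. Each $V\in\mathcal{V}$ sits inside some $G_\alpha$ or inside $Y-D$; in the former case $V\cap D\subset G_\alpha\cap D=U_\alpha$, while in the latter $V\cap D=\emptyset$ and is discarded, so $\mathcal{W}$ is a $s\lambda$-open cover of $D$ refining $\{U_\alpha\}$. Local finiteness transfers by restriction: for $y\in D$, a $s\lambda$-neighbourhood $N$ of $y$ in $Y$ meeting only finitely many members of $\mathcal{V}$ gives a $s\lambda$-neighbourhood $N\cap D$ of $y$ in $D$ meeting only finitely many members of $\mathcal{W}$.

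The only non-routine point is the subspace identification of $s\lambda$-open sets and of $s\lambda$-neighbourhoods, the same lemma the proof of Theorem \ref{22} tacitly invokes; once it is granted, each of the three cases reduces to the standard closed-subspace trick of augmenting the cover of $D$ by the $s\lambda$-open set $Y-D$, applying the appropriate covering property in $Y$, and restricting back to $D$.
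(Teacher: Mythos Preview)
Your argument is correct and follows essentially the same route as the paper: the paper proves only the $s\lambda$-paracompact case explicitly (lifting the cover of $D$ by adjoining $Y-D$, taking a locally finite $s\lambda$-open refinement in $Y$, and tracing back to $D$) and dismisses the Lindel\"of and countably compact cases as parallel to the topological setting. You supply more detail than the paper does, including the explicit verification of local finiteness and the acknowledgment that the subspace identification of $s\lambda$-open sets is being used tacitly, but the underlying strategy is identical.
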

\begin{proof}
We give proof of the first property and proofs of the rest are parallel to those of a topological space. Let $ D $ be a $ s\lambda $-closed subset of a $ s\lambda $-paracompact GTspace $ (Y,\gamma) $ and let $ \mathcal{G} $ be a $ s\lambda $-open covering of $ D $ by $ s\lambda $-open sets in $ (D,\gamma_D) $. For each $ G\in \mathcal{G} $, there exists a $ s\lambda $-open set $ G' $ of $ Y $ such that $ G'\cap D=G $. Then $ s\lambda $-open sets $ G' $ along with the $ s\lambda $-open set $ Y-D $ cover $ Y $. Let $ \mathcal{H} $ be a  locally finite $ s\lambda $-open refinement of this covering that covers $ Y $. Then the collection $ \mathcal{C}=\{H\cap D: H\in  \mathcal{H}\} $ is the required  locally finite $ s\lambda $-open refinement of $ \mathcal{G} $ that covers $ D $. Hence the result follows.
\end{proof}

\begin{theorem}\label{24}  A $ sg_\lambda $-closed subset of a $ s\lambda$-compact GTspace $ (Y,\gamma) $ is $ s\lambda$-compact.
\end{theorem}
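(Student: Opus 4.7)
The plan is to replace $D$ by its $s\lambda$-closure $\overline{sD_\lambda}$, exploit the $sg_\lambda$-closedness of $D$ to show that any cover of $D$ is automatically a cover of $\overline{sD_\lambda}$, and then invoke Theorem~\ref{22} to obtain a finite subcover.

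To carry this out, I would start with an arbitrary $s\lambda$-open cover $\{G_\nu : \nu \in \Lambda\}$ of $D$ in $(Y,\gamma)$. Since arbitrary unions of $s\lambda$-open sets are $s\lambda$-open (as observed after Definition~\ref{4}), the set $V = \bigcup_{\nu \in \Lambda} G_\nu$ is an $s\lambda$-open set containing $D$. The $sg_\lambda$-closedness of $D$ (Definition~\ref{7}) then yields $\overline{sD_\lambda} \subset V$, so that the family $\{G_\nu : \nu \in \Lambda\}$ is in fact an $s\lambda$-open cover of $\overline{sD_\lambda}$.

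Next, because $\overline{sD_\lambda}$ is $s\lambda$-closed (by the $s\lambda$-analogue of Theorem~\ref{5}(2)) and $(Y,\gamma)$ is $s\lambda$-compact, Theorem~\ref{22} gives that $\overline{sD_\lambda}$ is $s\lambda$-compact as a subspace. Passing to the induced generalized topology on $\overline{sD_\lambda}$, the restricted collection $\{G_\nu \cap \overline{sD_\lambda} : \nu \in \Lambda\}$ is an open cover of $\overline{sD_\lambda}$ and hence admits a finite subcover indexed by $\nu_1,\dots,\nu_n$. The corresponding sets $G_{\nu_1},\dots,G_{\nu_n}$ cover $\overline{sD_\lambda}$ and so a fortiori cover $D \subset \overline{sD_\lambda}$, proving $s\lambda$-compactness of $D$.

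The only subtle point I anticipate is the transition from ambient $s\lambda$-open sets to the subspace generalized topology when applying Theorem~\ref{22} to $\overline{sD_\lambda}$; one must note that the restrictions $G_\nu \cap \overline{sD_\lambda}$ belong to the induced generalized topology on $\overline{sD_\lambda}$, which is immediate from the definition $\gamma_{\overline{sD_\lambda}} = \{\overline{sD_\lambda}\cap U : U \in \gamma\}$ used in the proof of Theorem~\ref{22}. Everything else is a direct chaining of Definition~\ref{7}, the $s\lambda$-closedness of $s\lambda$-closures, and Theorem~\ref{22}.
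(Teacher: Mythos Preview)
Your argument is essentially identical to the paper's: take an $s\lambda$-open cover of $D$, use $sg_\lambda$-closedness to push the cover up to $\overline{sD_\lambda}$, invoke Theorem~\ref{22} to extract a finite subcover of $\overline{sD_\lambda}$, and restrict back to $D$. The paper does exactly this, in fewer words.

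One caution about your final paragraph: the claim that $G_\nu \cap \overline{sD_\lambda}$ lies in $\gamma_{\overline{sD_\lambda}} = \{\overline{sD_\lambda}\cap U : U \in \gamma\}$ is not ``immediate'' as you say, because the $G_\nu$ are $s\lambda$-open in $(Y,\gamma)$, not $\gamma$-open. What is actually needed is that the restrictions are $s\lambda$-open with respect to $\gamma_{\overline{sD_\lambda}}$, which is a different and less obvious statement. The paper does not pause over this point either; it simply writes $\overline{sD_\lambda}\subset \bigcup_{i=1}^{k} G_{\alpha_i}$ after declaring $\overline{sD_\lambda}$ to be $s\lambda$-compact. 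So your proof is no worse than the paper's in this respect, but the specific justification you offer for the subspace step does not go through as stated.
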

\begin{proof}
Let $ D $ be a $ sg_\lambda $-closed subset of  $ s\lambda$-compact GTspace $ (Y,\gamma) $ and $ \mathcal{G}=\{G_\alpha\}_{\alpha\in \Lambda} $ be a $ s\lambda $-open cover of $ D $, $ \Lambda $ being an index set. So $ \bigcup_{\alpha\in \Lambda}\{ G_\alpha\} $ is a $ s\lambda $-open set containing $ D $ and hence by definition, $ \overline{sD_\lambda}\subset \bigcup_\alpha\{ G_\alpha\} $. But $ \overline{sD_\lambda}$ is a $ s\lambda $-closed set and so is a $ s\lambda $-compact set, by Theorem \ref{22}, and hence $ D\subset  \overline{sD_\lambda}\subset \bigcup^k_{i=1}(G_{\alpha_i}) $ for some  $ \{\alpha_1, \alpha_2,.....,\alpha_k\}\subset \Lambda $. Hence the result follows. 
\end{proof}

\begin{theorem}\label{25}
Any $ sg_\lambda $-closed subset of a $ s\lambda $-paracompact (resp. $ s\lambda $-Lindel$ \ddot{o} $f,  $ s\lambda $-countably compact) GTspace is $ s\lambda $-paracompact (resp. $ s\lambda $-Lindel$ \ddot{o} $f,  $ s\lambda $-countably compact). 

The proofs are parallel to that of the Theorem \ref{24}, so are omitted.  
\end{theorem}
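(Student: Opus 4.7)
The plan is to adapt the argument of Theorem \ref{24} nearly verbatim for each of the three properties, substituting Theorem \ref{23} for Theorem \ref{22}. A single opening move will unify the three cases. Let $D$ be an $sg_\lambda$-closed subset of $(Y,\gamma)$ and let $\mathcal{G}=\{G_\alpha\}_{\alpha\in\Lambda}$ be an $s\lambda$-open cover of $D$ (taken countable in the $s\lambda$-Lindel$\ddot{o}$f and $s\lambda$-countably compact variants). Since the union of $s\lambda$-open sets is itself $s\lambda$-open (see Section \ref{sec:pre}), $U=\bigcup_{\alpha\in\Lambda}G_\alpha$ is an $s\lambda$-open set containing $D$, and the definition of $sg_\lambda$-closed immediately yields $\overline{sD_\lambda}\subseteq U$.

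Next I would appeal to Theorem \ref{23}: because $\overline{sD_\lambda}$ is $s\lambda$-closed in $(Y,\gamma)$, it inherits whichever of the three covering properties is hypothesized on $Y$. Viewing $\mathcal{G}$ as an $s\lambda$-open cover of $\overline{sD_\lambda}$, I would then extract, in the $s\lambda$-paracompact case, a locally finite $s\lambda$-open refinement $\mathcal{H}$ of $\mathcal{G}$ covering $\overline{sD_\lambda}$; in the $s\lambda$-Lindel$\ddot{o}$f case, a countable subfamily of $\mathcal{G}$ covering $\overline{sD_\lambda}$; and in the $s\lambda$-countably compact case, a finite subfamily of the countable cover $\mathcal{G}$ covering $\overline{sD_\lambda}$. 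Because $D\subseteq\overline{sD_\lambda}$, each of these families also covers $D$, delivering the required property on $D$.

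The main obstacle I expect is the bookkeeping around local finiteness in the paracompact case, namely verifying that $\{H\cap D:H\in\mathcal{H}\}$ is a locally finite $s\lambda$-open refinement in the subspace $D$. This reduces to the standard subspace check: any $y\in D$ lies in $\overline{sD_\lambda}$ and admits an $s\lambda$-neighbourhood there meeting only finitely many members of $\mathcal{H}$, and its intersection with $D$ is an $s\lambda$-neighbourhood of $y$ in $D$ with the same finiteness. Beyond this routine verification, the argument is a direct mirror of the proof of Theorem \ref{24}, precisely as the authors assert.
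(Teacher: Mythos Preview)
Your proposal is correct and is exactly the argument the paper intends: pass from $D$ to $\overline{sD_\lambda}$ via the $sg_\lambda$-closed condition, invoke Theorem~\ref{23} on the $s\lambda$-closed set $\overline{sD_\lambda}$, and pull the resulting refinement or subcover back to $D$. One small slip: in the $s\lambda$-Lindel\"of case the initial cover $\mathcal{G}$ should be arbitrary, not countable---it is the extracted subcover that is countable, as you in fact state correctly in the second paragraph.
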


\begin{remark}\label{25A}
We see from Example \ref{25B} that union of two $ s\lambda $-closed sets in a GTspace may not be $ s\lambda $-closed which affects the additivity of closure property of a topological space. This leads to derive the following Lemma \ref{26} which is needed to prove the next Theorem \ref{27}.
\end{remark}

\begin{example}\label{25B}
Union of two $ s\lambda $-closed sets in a GTspace may not be $ s\lambda $-closed.

Suppose $ Y=\{a,b,c,d\}, \gamma=\{\emptyset, \{a,b\}, \{b,c\}, \{a,b,c\}\} $, then $ (Y,\gamma) $ is a GTspace but not a topological space. So $ \gamma $-closed sets are: $ \{Y, \{c,d\}, \{a,d\}, \{d\}\} $; $ s\gamma $-open sets are: $ \{\emptyset, \{a,b\}, \{b,c\}, \{a,b,c\}, \{d\}, \{a,b.d\}, \{b,c,d\}\} $ (Definition \ref{1}); $ s\gamma $-closed sets are: \{Y, \{c,d\}, \{a,d\},\{d\}, \{a,b,c\}, \{c\}, \{a\}\}. Then by Lemma \ref{6} (1) $ sker_\gamma\{a\}\cap \overline{s\{a\}_\gamma}=\{a\}\Rightarrow \{a\} $ is a $ s\lambda $-closed set. Similarly $ \{c\} $ is also a $ s\lambda $-closed set. But $ sker_\gamma\{a,c\}\cap \overline{s\{a,c\}_\gamma}=\{a,b,c\}\not=\{a,c\}= \{a\}\cup \{c\} $. Hence the result follows.
\end{example}

\begin{lemma}\label{26}
Suppose $ E $ and $ F $ are subsets of a GTspace $(Y,\gamma)$, then $ \overline{sE_\lambda}\bigcup \overline{sF_\lambda}=\overline{s(E\bigcup F)}_\lambda$ if  union of finite number of $ s\lambda $-closure of sets is $ s\lambda $-closed. 
\end{lemma}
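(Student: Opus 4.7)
My plan is to prove the equality by showing both inclusions, leveraging the $s\lambda$-analogues of the closure properties in Theorem \ref{5}, which the excerpt explicitly states carry over to the $s\lambda$-setting.

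For the inclusion $\overline{sE_\lambda}\cup \overline{sF_\lambda}\subseteq \overline{s(E\cup F)_\lambda}$, I would use monotonicity: since $E\subseteq E\cup F$ and $F\subseteq E\cup F$, the $s\lambda$-analogue of Theorem \ref{5}(3) gives $\overline{sE_\lambda}\subseteq \overline{s(E\cup F)_\lambda}$ and $\overline{sF_\lambda}\subseteq \overline{s(E\cup F)_\lambda}$, whence their union is contained in $\overline{s(E\cup F)_\lambda}$. Notice that this direction does not need the hypothesis and holds in any GTspace.

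For the reverse inclusion $\overline{s(E\cup F)_\lambda}\subseteq \overline{sE_\lambda}\cup \overline{sF_\lambda}$, I would invoke the hypothesis. By the $s\lambda$-analogue of Theorem \ref{5}(4), $E\subseteq \overline{sE_\lambda}$ and $F\subseteq \overline{sF_\lambda}$, so $E\cup F\subseteq \overline{sE_\lambda}\cup \overline{sF_\lambda}$. By hypothesis, the set $\overline{sE_\lambda}\cup \overline{sF_\lambda}$ is $s\lambda$-closed, so it appears as one of the sets in the intersection of the $s\lambda$-analogue of the representation formula (Theorem \ref{5}(1)): $\overline{s(E\cup F)_\lambda}=\bigcap\{P:E\cup F\subseteq P,\ P\text{ is }s\lambda\text{-closed}\}$. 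Consequently $\overline{s(E\cup F)_\lambda}\subseteq \overline{sE_\lambda}\cup \overline{sF_\lambda}$, completing the argument.

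There is essentially no obstacle here: the only subtle point is the role of the hypothesis. As Remark \ref{25A} and Example \ref{25B} indicate, $s\lambda$-closedness is not preserved under finite unions in general, which is exactly why the second inclusion can fail without the assumption; once that assumption is granted, the reverse inclusion drops out of the defining intersection representation of $s\lambda$-closure. I would present the argument in at most a few lines, citing Theorem \ref{5} (in its $s\lambda$ form, as noted after that theorem) for both monotonicity and the intersection characterization.
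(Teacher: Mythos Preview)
Your proof is correct and follows essentially the same approach as the paper: both directions are handled exactly as you describe, using monotonicity of $s\lambda$-closure for one inclusion and, for the other, the fact that $\overline{sE_\lambda}\cup \overline{sF_\lambda}$ is (by hypothesis) an $s\lambda$-closed set containing $E\cup F$ and hence contains its $s\lambda$-closure. The paper's argument is just a slightly terser version of yours.
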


\begin{proof}
Let $ E,F\subset Y $. Then $ E\subset E\cup F $, $ F\subset E\cup F $ and $ \overline{sE_\lambda}\subset \overline{s(E\cup F)_\lambda} $,  $ \overline{sF_\lambda}\subset \overline{s(E\cup F)_\lambda} $, so $ \overline{sE_\lambda}\cup \overline{sF_\lambda}\subset \overline{s(E\cup F)_\lambda} $. Again by assumption, $ \overline{sE_\lambda}\cup \overline{sF_\lambda} $ is a $ s\lambda $-closed set and it contains $ E\cup F $, so $ \overline{s(E\cup F)_\lambda}\subset \overline{sE_\lambda}\cup \overline{sF_\lambda} $. Hence the result follows.
\end{proof}

\begin{theorem}\label{27}
Each $ s\lambda $-compact subset of a $ s\lambda $-regular GTspace $ (Y,\gamma) $ is $ sg_\lambda $-closed, if union of finite number of $ s\lambda $-closure of sets is $ s\lambda $-closed.
\end{theorem}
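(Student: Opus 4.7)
The plan is to verify the defining condition of $sg_\lambda$-closedness directly: given an $s\lambda$-open set $V$ with $D\subset V$, I must show $\overline{sD_\lambda}\subset V$. To produce such a containment, I will cover $D$ by small $s\lambda$-open sets whose $s\lambda$-closures still sit inside $V$, then use $s\lambda$-compactness to reduce to a finite subcover, and finally use the additivity hypothesis (via Lemma \ref{26}) to push the closure through the finite union.

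More precisely, fix an $s\lambda$-open set $V$ with $D\subset V$. For each point $y\in D$ we have $y\in V$, and $V$ is $s\lambda$-open, so by the characterization (1)$\Leftrightarrow$(2) in Theorem \ref{15}, applied to the $s\lambda$-regular GTspace $(Y,\gamma)$, there is an $s\lambda$-open set $E_y$ with
\[ y\in E_y\subset \overline{sE_{y,\lambda}}\subset V. \]
The family $\{E_y:y\in D\}$ is an $s\lambda$-open cover of the $s\lambda$-compact set $D$, so by Theorem \ref{22}/the definition of $s\lambda$-compactness there exist $y_1,\dots,y_n\in D$ with
\[ D\subset \bigcup_{i=1}^{n} E_{y_i}. \]

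Taking $s\lambda$-closures and applying monotonicity from Theorem \ref{5}(3) yields $\overline{sD_\lambda}\subset \overline{s\bigl(\bigcup_{i=1}^{n}E_{y_i}\bigr)_\lambda}$. Under the hypothesis that the union of finitely many $s\lambda$-closures is $s\lambda$-closed, Lemma \ref{26} (applied inductively to $n$ sets) gives
\[ \overline{s\Bigl(\bigcup_{i=1}^{n}E_{y_i}\Bigr)_\lambda}=\bigcup_{i=1}^{n}\overline{sE_{y_i,\lambda}}\subset V, \]
so $\overline{sD_\lambda}\subset V$. By Definition \ref{7} this is exactly what it means for $D$ to be $sg_\lambda$-closed.

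The only delicate point is the passage from a cover by two sets to a cover by $n$ sets in Lemma \ref{26}: strictly speaking the lemma is stated for two sets, so I would indicate that the inductive step uses the same hypothesis at each stage (at stage $k$ we need $\overline{sE_{y_1,\lambda}}\cup\cdots\cup\overline{sE_{y_k,\lambda}}$ to be $s\lambda$-closed, which is the given additivity assumption). The rest is routine: the $s\lambda$-regularity hypothesis is used only to obtain the shrinking $E_y$ with $\overline{sE_{y,\lambda}}\subset V$, and $s\lambda$-compactness of $D$ supplies the finiteness that makes the additivity hypothesis applicable.
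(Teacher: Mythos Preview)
Your proof is correct and follows essentially the same approach as the paper's: use $s\lambda$-regularity via Theorem \ref{15}(2) to shrink around each point of $D$, extract a finite subcover by $s\lambda$-compactness, and apply Lemma \ref{26} (extended inductively to $n$ sets) to conclude $\overline{sD_\lambda}\subset V$. The only quibble is that your reference to Theorem \ref{22} is unnecessary---what you actually use is just the definition of $s\lambda$-compactness (Definition \ref{20})---but this does not affect the argument.
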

\begin{proof}
Suppose $ D $ is a $ s\lambda $-compact  subset of a $ s\lambda $-regular GTspace $ (Y,\gamma) $ and $ D\subset U, U $ being a $ s\lambda $-open set. Let $ y\in D $, so $ y\in U $. As $ (Y,\gamma) $ is a  $ s\lambda $-regular GTspace, there exists a $ s\lambda $-open set $ V_y $ such that $ y\in V_y\subset \overline{s(V_y)_{\lambda}}\subset U $. Taking union we get $\bigcup  \{y\}\subset \bigcup_{y\in D} V_y\subset \bigcup_{y \in D}\overline{s(V_y)_\lambda}\subset U $ which implies $ D\subset \bigcup_{y\in D} V_y\subset \bigcup\{\overline{s(V_y )_\lambda}:y\in D\}\subset U  $. So $\{V_{y, y\in D}\} $ is a $ s\lambda $-open cover of $ D $. For $ s\lambda $-compactness it has a finite subcover, say $ V_i, i=1.2.....n.$ and let $ V'=\bigcup V_i,{i=1,2,.....,n} $. Then $ D\subset V'\subset \overline{sV'_\lambda}\subset U $, since $ \bigcup \overline{s(V_i)_\lambda}=  \overline{s(\cup V_i)_\lambda}, i=1,2,...,n$, by Lemma \ref{26}. So $ \overline{sD_\lambda}\subset U $ and hence $ D $ is $ sg_\lambda $-closed, by definition.
\end{proof}

\begin{definition}\label{28}(c.f.\cite{SS}).
Let $ (Y,\gamma) $ be a GTspace, then it is said to be $ s\lambda$-normal  if for any pair of disjoint $ s\lambda $-closed sets $ A,B $ of $ Y $ there exist $ s\lambda $-open sets $ E,F $ containing $ A,B $ respectively such that $ \overline{sE_\lambda}\cap\overline{sF_\lambda}=\emptyset$.
\end{definition}

\begin{theorem}\label{29}
Suppose $ (Y,\gamma) $ is a GTspace, then  following are equivalent.

(1) $ (Y,\gamma) $ is $ s\lambda $-normal.

(2) For every pair of $ s\lambda $-open sets $ E,F $ whose union is $ Y $, there exist $ s\lambda $-closed sets $ A,B $ such that $ A\subset E, B\subset F, A\cup B=Y $.

(3) For every $ s\lambda $-closed set $ A $ and every $ s\lambda $-open set $ G $ containing $ A $, there exists a $ s\lambda $-open set $ E $ such that $ A\subset E\subset \overline{sE_\lambda}\subset G $.

(4) Every pair of disjoint $ s\lambda $-closed sets are $ s\lambda $-strongly separated (Note \ref{7A}).
\end{theorem}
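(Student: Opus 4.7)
The plan is to establish the cyclic chain $(1)\Rightarrow(2)\Rightarrow(3)\Rightarrow(1)$, after which $(1)\Leftrightarrow(4)$ will follow by unpacking the definition of $s\lambda$-strongly separated sets. For $(1)\Rightarrow(2)$, given $s\lambda$-open sets $E,F$ with $E\cup F=Y$, the complements $Y-E$ and $Y-F$ are disjoint $s\lambda$-closed sets; $s\lambda$-normality supplies $s\lambda$-open $U\supset Y-E$ and $V\supset Y-F$ with $\overline{sU_\lambda}\cap\overline{sV_\lambda}=\emptyset$, hence $U\cap V=\emptyset$, and the $s\lambda$-closed sets $A=Y-U\subset E$ and $B=Y-V\subset F$ satisfy $A\cup B=Y-(U\cap V)=Y$. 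For $(2)\Rightarrow(3)$, with $A$ an $s\lambda$-closed subset of an $s\lambda$-open set $G$, the $s\lambda$-open sets $G$ and $Y-A$ cover $Y$; condition $(2)$ yields $s\lambda$-closed $A'\subset G$ and $B'\subset Y-A$ with $A'\cup B'=Y$. Setting $E=Y-B'$ gives an $s\lambda$-open set with $A\subset E\subset A'$, and since $A'$ is $s\lambda$-closed, Theorem \ref{5} gives $\overline{sE_\lambda}\subset\overline{sA'_\lambda}=A'\subset G$, so $A\subset E\subset\overline{sE_\lambda}\subset G$.

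For $(3)\Rightarrow(1)$, let $A,B$ be disjoint $s\lambda$-closed sets. Since $A\subset Y-B$, which is $s\lambda$-open, $(3)$ supplies an $s\lambda$-open $E$ with $A\subset E\subset\overline{sE_\lambda}\subset Y-B$. Then $B\subset Y-\overline{sE_\lambda}$, which is itself $s\lambda$-open, so a second application of $(3)$ yields an $s\lambda$-open $F$ with $B\subset F\subset\overline{sF_\lambda}\subset Y-\overline{sE_\lambda}$, and consequently $\overline{sE_\lambda}\cap\overline{sF_\lambda}=\emptyset$, as required by Definition \ref{28}.

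The equivalence $(1)\Leftrightarrow(4)$ is then immediate from the convention of Note \ref{7A}: by analogy with the classical notion, two sets are $s\lambda$-strongly separated precisely when they lie in $s\lambda$-open sets whose $s\lambda$-closures are disjoint, which is exactly the conclusion of Definition \ref{28} when the sets are $s\lambda$-closed and disjoint. The main obstacle is the step $(3)\Rightarrow(1)$, where one must apply the sandwich property $(3)$ twice in succession to upgrade control on only $\overline{sE_\lambda}$ into the symmetric disjoint-closures condition; the remaining implications reduce to complementation combined with the closure monotonicity and idempotence provided by Theorem \ref{5}.
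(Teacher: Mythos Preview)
Your cycle $(1)\Rightarrow(2)\Rightarrow(3)\Rightarrow(1)$ is correct and matches the paper's argument essentially step for step, including the double application of $(3)$ to obtain disjoint closures.

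The gap is in your treatment of $(4)$. You take ``$s\lambda$-strongly separated'' to mean ``contained in $s\lambda$-open sets with disjoint $s\lambda$-closures,'' which would indeed make $(1)\Leftrightarrow(4)$ a tautology. But Note~\ref{7A} says the notion is taken parallel to the classical one, and the paper's own proof of $(4)\Rightarrow(3)$ uses only that the separating $s\lambda$-open sets $E,F$ satisfy $E\cap F=\emptyset$; that is, in this paper $s\lambda$-strongly separated means merely \emph{separated by disjoint $s\lambda$-open sets}. Under that reading, $(1)\Rightarrow(4)$ is still easy (disjoint closures force disjoint sets), but $(4)\Rightarrow(1)$ is no longer immediate: from disjoint open neighbourhoods alone you cannot conclude disjoint closures without further work. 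The paper closes the loop by proving $(4)\Rightarrow(3)$ directly (if $A$ is $s\lambda$-closed inside the $s\lambda$-open $G$, apply $(4)$ to the disjoint $s\lambda$-closed pair $A,\,Y-G$ to get disjoint $s\lambda$-open $E\supset A$, $F\supset Y-G$, whence $A\subset E\subset\overline{sE_\lambda}\subset Y-F\subset G$), and then your already-established $(3)\Rightarrow(1)$ finishes. You should replace your ``immediate'' claim for $(4)\Rightarrow(1)$ with this short argument through $(3)$.
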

\begin{proof}
(1) $ \Rightarrow $ (2): Let $ E,F $ be a pair of $ s\lambda $-open sets such that $ E\cup F=Y $. Then $ Y-E $ and $Y-F$ are disjoint $ s\lambda $-closed sets. Since $ (Y,\gamma) $ is a $ s\lambda $-normal GTspace, there exist $ s\lambda $-open sets $ M,N $ such that $ Y-E\subset M $ and $ Y-F\subset N $ and $ \overline{sM_\lambda}\cap \overline{sN_\lambda}=\emptyset$. Therefore, $ M\cap N=\emptyset $. Put $ A=Y-M $ and $ B=Y-N $, then $ A,B $ are $ s\lambda $-closed sets such that $ A\subset E, B\subset F, A\cup B=Y $.

(2) $ \Rightarrow $ (3): Let $ A $ be a $ s\lambda $-closed set and $ G $ be a $ s\lambda $-open set containing $ A $. Then $ Y-A $ and $ G $ are $ s\lambda $-open sets whose union is $ Y $. Then by (2), there exist $ s\lambda $-closed sets $ P,Q $ such that $ P\subset Y-A $ and $ Q\subset G $ and $ P\cup Q=Y $. Therefore $ A\subset Y-P, Y-G\subset Y-Q $ and $ (Y-P)\cap (Y-Q)=\emptyset $. Put $ E=Y-P $ and $ F=Y-Q $, then $ E,F $ are disjoint $ s\lambda $-open sets such that $ A\subset E=Y-P\subset Q= Y-F\subset G $. As $ Y-F $ is a  $ s\lambda $-closed set, we have $ \overline{sE_\lambda}\subset Y-F $ and $ A\subset E\subset \overline{sE_\lambda}\subset G $.

(3) $ \Rightarrow $ (1): Let $ P,Q $ be two disjoint  $ s\lambda $-closed sets. Then $ P\subset Y-Q $, a  $ s\lambda $-open set, hence by (3), there exists a $ s\lambda $-open set $ E $ such that $ P\subset E\subset \overline{sE_\lambda}\subset Y-Q $. Put $ Y- \overline{sE_\lambda}=F$, a $ s\lambda $-open set containing the  $ s\lambda $-closed set $ Q $. Then again by (3), there exists a $ s\lambda $-open set $ K $ such that $ Q\subset K\subset \overline{sK_\lambda}\subset F=Y- \overline{sE_\lambda}$. Therefore $ P,Q $ are contained in $ s\lambda $-open sets $ E, K $ respectively and $ \overline{sE_\lambda}\cap \overline{sK_\lambda}=\emptyset $. Hence $ (Y,\gamma) $ is a $ s\lambda $-normal GTspace.

(1) $ \Rightarrow $ (4): 
Let $ P,Q $ be a pair of disjoint  $ s\lambda $-closed sets of $ Y $. For $ s\lambda $-normality criterion, there exist $ s\lambda $-open sets $ E,F $ containing $ P,Q $ respectively such that $ \overline{sE_\lambda}\cap\overline{sF_\lambda}=\emptyset$. This implies $ E\cap F=\emptyset $ and the result follows.

(4) $ \Rightarrow $ (3): Assume $ A $ is a $ s\lambda $-closed set contained in a $ s\lambda $-open set  $ G $, then $ A $ and $ Y-G $ are a pair of disjoint $ s\lambda $-closed sets. So by the $ s\lambda $-strongly separatedness, there exist $ s\lambda $-open sets $ E,F $ containing $ A $ and $Y-G $ respectively such that $ E\cap F=\emptyset$. This implies $ E\subset Y-F\Rightarrow \overline{sE_\lambda}\subset \overline{s(Y-F)_\lambda}=Y-F\subset G $. Thus $ A\subset E\subset \overline{sE_\lambda}\subset G $. 
\end{proof}

\begin{theorem}\label{30}
Let $ (Y,\gamma) $ be a GTspace, then  following are equivalent

(1) $ (Y,\gamma) $ is $ s\lambda $-normal.

(2) For each $ s\lambda $-closed set $ P $ and each $ s\lambda $-open set $ G $ containing $ P $, there exists a $ sg_\lambda $-open set $ E $ such that $ P\subset E\subset \overline{sE_\lambda}\subset G $.

(3) For each $ s\lambda $-closed set $ P $ and each $ sg_\lambda $-open set $ G $ containing $ P $, there exists a $ sg_\lambda $-open set $ E $ such that $ P\subset E\subset \overline{sE_\lambda}\subset sInt_\lambda(G) $.

(4)  For each $ sg_\lambda $-closed set $ P $ and each $ s\lambda $-open set $ G $ containing $ P $, there exists a $ s\lambda $-open set $ E $ such that $ \overline{sP_\lambda}\subset E\subset \overline{sE_\lambda}\subset G $. 

 (5)
For each $ s\lambda $-closed set $ P $ and each $ sg_\lambda $-open set $ G $ containing $ P $, there exists a $ s\lambda $-open set $ E $ such that $ P\subset E\subset \overline{sE_\lambda}\subset sInt_\lambda(G) $.

(6) For each $ sg_\lambda $-closed set $ P $ and each $ s\lambda $-open set $ G $ containing $ P $, there exists a $ sg_\lambda $-open set $ E $ such that $ \overline{sP_\lambda}\subset E\subset \overline{sE_\lambda}\subset G $. \end{theorem}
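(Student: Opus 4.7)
The plan is to prove the six equivalences by a single cyclic chain
$(1)\Rightarrow(2)\Rightarrow(3)\Rightarrow(5)\Rightarrow(4)\Rightarrow(6)\Rightarrow(1)$, since this ordering lets me convert $sg_\lambda$-open data into $s\lambda$-open data (and vice versa) using only two basic bridges: Theorem \ref{9} (if a $s\lambda$-closed set $P$ sits inside a $sg_\lambda$-open set $H$ then $P\subset sInt_\lambda(H)$) and the very definition of $sg_\lambda$-closedness (if a $sg_\lambda$-closed set $P$ sits inside a $s\lambda$-open set $G$ then $\overline{sP_\lambda}\subset G$). Both of these also sit silently inside Theorem \ref{29}(3), which is my entry and exit point for condition (1).

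For the easy implications, $(1)\Rightarrow(2)$ is immediate from Theorem \ref{29}(3) because every $s\lambda$-open set is $sg_\lambda$-open; and $(4)\Rightarrow(6)$ is likewise immediate by the same inclusion of classes. For $(2)\Rightarrow(3)$, given $s\lambda$-closed $P$ and $sg_\lambda$-open $G\supset P$, Theorem \ref{9} gives $P\subset sInt_\lambda(G)$, which is $s\lambda$-open, and applying (2) inside $sInt_\lambda(G)$ yields the required $sg_\lambda$-open $E$. For $(5)\Rightarrow(4)$, given $sg_\lambda$-closed $P$ and $s\lambda$-open $G\supset P$, Definition \ref{7} gives $\overline{sP_\lambda}\subset G$ with $\overline{sP_\lambda}$ $s\lambda$-closed, and applying (5) produces a $s\lambda$-open $E$ with $\overline{sP_\lambda}\subset E\subset \overline{sE_\lambda}\subset sInt_\lambda(G)=G$.

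The upgrade step $(3)\Rightarrow(5)$ is the first place the $sg_\lambda$-vs.-$s\lambda$ mismatch bites: condition (3) only gives a $sg_\lambda$-open $E'$, but (5) demands a $s\lambda$-open $E$. The trick is to shrink $E'$: since $P$ is $s\lambda$-closed and $P\subset E'$, Theorem \ref{9} gives $P\subset sInt_\lambda(E')$, so I set $E=sInt_\lambda(E')$; this $E$ is $s\lambda$-open and $\overline{sE_\lambda}\subset \overline{sE'_\lambda}\subset sInt_\lambda(G)$, as required. The same shrinking trick drives the closing step $(6)\Rightarrow(1)$, which I expect to be the main obstacle. Given disjoint $s\lambda$-closed $P,Q$, I apply (6) with the $sg_\lambda$-closed set $P$ and the $s\lambda$-open set $Y-Q$ to obtain a $sg_\lambda$-open $E$ with $P=\overline{sP_\lambda}\subset E\subset \overline{sE_\lambda}\subset Y-Q$; I then shrink to $E_1=sInt_\lambda(E)$, a $s\lambda$-open set still containing $P$ with $\overline{sE_{1\lambda}}\subset \overline{sE_\lambda}\subset Y-Q$. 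I now apply (6) again to the $s\lambda$-closed (hence $sg_\lambda$-closed) set $Q$ inside the $s\lambda$-open set $Y-\overline{sE_{1\lambda}}$, and shrink the resulting $F$ to $F_1=sInt_\lambda(F)$. Then $E_1,F_1$ are $s\lambda$-open neighbourhoods of $P,Q$ with $\overline{sE_{1\lambda}}\cap \overline{sF_{1\lambda}}=\emptyset$, which is exactly Definition \ref{28}.

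The subtlety I will have to watch is that closures and interiors have to be taken in the $s\lambda$-sense (not the $s\gamma$-sense), and that I may not freely union $s\lambda$-closed sets (Example \ref{25B}); happily nothing in the cycle needs such a union, since I only ever invoke $P\subset sInt_\lambda(H)$ for a single $s\lambda$-closed $P$ and bound $\overline{sE_\lambda}$ monotonically under $E\subset E'$, both of which hold unconditionally.
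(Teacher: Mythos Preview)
Your proof is correct and uses essentially the same approach as the paper: both arguments rest on the two bridges you identify (Theorem~\ref{9} and Definition~\ref{7}) together with the shrinking trick $E\mapsto sInt_\lambda(E)$ to upgrade $sg_\lambda$-open sets to $s\lambda$-open ones. The only cosmetic difference is the ordering of the cycle---the paper runs $(1)\Leftrightarrow(2)$, then $(2)\Rightarrow(3)\Rightarrow(4)\Rightarrow(5)\Rightarrow(6)\Rightarrow(2)$, while you go $(3)\Rightarrow(5)\Rightarrow(4)$ instead of $(3)\Rightarrow(4)\Rightarrow(5)$---but the individual steps and the key ideas are the same, and your $(6)\Rightarrow(1)$ is precisely the paper's $(2)\Rightarrow(1)$ argument applied after the trivial $(6)\Rightarrow(2)$.
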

\begin{proof}
(1) $\Rightarrow $ (2): Suppose $ P $ is a $ s\lambda $-closed set and $ G $ is a $ s\lambda $-open set containing $ P $, then $ P $   and  $ Y-G $ are two disjoint $ s\lambda $-closed sets. So, by normality criterion, there exist  $ s\lambda $-open sets $ E $ and $F $ containing $ P $ and $ Y-G $ respectively such that $ \overline{sE_\lambda}\cap\overline{sF_\lambda}=\emptyset\Rightarrow E\cap F=\emptyset $. Since $ F $ is also a $ sg_\lambda $-open set, $ Y-G\subset sInt_\lambda(F) $, by Theorem \ref{9}. By Lemma \ref{8}, we have, $ \overline{s(Y-F)_\lambda}=Y- sInt_\lambda(F)\subset G$.    Thus $ P\subset E\subset\overline{sE_\lambda}\subset G $, since $  E\subset Y-F$.

(2) $ \Rightarrow $ (1):  Suppose the conditions hold and $ P, Q $ are two disjoint $ s\lambda $-closed sets. Then $ P\subset Y-Q $, a  $ s\lambda $-open set, hence by assumption, there exists a $ sg_\lambda $-open set $ E $ such that $ P\subset E\subset \overline{sE_\lambda}\subset Y-Q \Rightarrow Q\subset Y-\overline{sE_\lambda}$, a $ s\lambda $-open set.  Then again by (2), there exists a $ sg_\lambda $-open set $ K $ such that $ Q\subset K\subset \overline{sK_\lambda}\subset Y- \overline{sE_\lambda}$. So, by Theorem \ref{9}, $ P\subset sInt_\lambda(E) $  and $ Q\subset sInt_\lambda(K) $ and  $ sInt_\lambda(E),  sInt_\lambda(K)$ are $ s\lambda $-open sets. Now $ sInt_\lambda(E)\subset E \Rightarrow \overline{sInt_\lambda(E)}\subset \overline{sE_\lambda} $ and $ sInt_\lambda(K)\subset K \Rightarrow \overline{sInt_\lambda(K)}\subset \overline{sK_\lambda} \subset Y-\overline{sE_\lambda} $. Since $ \overline{sE_\lambda} \bigcap (Y- \overline{sE_\lambda})=\emptyset,  \overline{sInt_\lambda(E)}\cap \overline{ sInt_\lambda(K)}=\emptyset $. Hence $ (Y,\gamma) $ is a $ s\lambda $-normal GTspace.

(2) $\Rightarrow $ (3): Suppose $ P $ is a $ s\lambda $-closed set and $ G $ is a  $ sg_\lambda $-open set containing $ P $. So by Theorem \ref{9}, $ P\subset sInt_\lambda(G) $, a $ s\lambda $-open set. Then by (2), there exists a $ sg_\lambda $-open set $ E $ such that $ P\subset E\subset \overline{sE_\lambda}\subset sInt_\lambda(G) $.

(3) $\Rightarrow $ (4): Suppose $ P $ is a $ sg_\lambda $-closed set and $ G $ is a  $ s\lambda $-open set containing $ P $, then $ \overline{sP_\lambda} \subset G$. Since $ G $ is also $ sg_\lambda $-open and $ \overline{sP_\lambda}$ is $ s\lambda $-closed, by (3), there exists a $ sg_\lambda $-open set $ F $ such that $ \overline{sP_\lambda}\subset F\subset\overline{sF_\lambda}\subset sInt_\lambda(G)= G $. Since $ F $ is $ sg_\lambda $-open and $\overline{sP_\lambda} $ is $ s\lambda $-closed,  we have, by Theorem \ref{9}, $\overline{sP_\lambda}\subset sInt_\lambda(F) $. Put $ sInt_\lambda(F)=E $, then $ E $ is $ s\lambda $-open and hence $ \overline{sP_\lambda}\subset E\subset \overline{sE_\lambda}=\overline{s(sInt_\lambda(F))_\lambda}\subset \overline{sF_\lambda}\subset G $, since $ sInt_\lambda (F) \subset F $.

(4) $\Rightarrow $ (5): Suppose $ P $ is a $ s\lambda $-closed set and $ G $ is a  $ sg_\lambda $-open set containing $ P $. So, by Theorem \ref{9}, $ P\subset sInt_\lambda(G) $, a $ s\lambda $-open set.  Since $ P $ is  $ sg_\lambda $-closed and $ sInt_\lambda(G) $ is  $ s\lambda $-open, then, by (4), there exists a $ s\lambda $-open set $ E $ such that $ P= \overline{sP_\lambda}\subset E\subset \overline{E_\lambda}\subset  sInt_\lambda(G)$.

(5) $\Rightarrow $ (6): Suppose $ P $ is a $ sg_\lambda $-closed set and $ G $ is a  $ s\lambda $-open set containing $ P $. Then $ \overline{sP_\lambda}\subset G $. Now $ \overline{sP_\lambda}$ is a $ s\lambda $-closed set and $ G $ is a $ sg_\lambda $-open set. So, by (5), there exists a $ s\lambda$-open set $ E $ such that $ \overline{sP_\lambda}\subset E\subset \overline{sE_\lambda}\subset sInt_\lambda(G)=G $.  

(6) $\Rightarrow $ (1): It will suffice to prove (6) $\Rightarrow $ (2) and it is obvious.
\end{proof} 

\begin{lemma}\label{31}
Let $ \mathcal{E} $ be a locally finite collection of subsets of $ (Y,\gamma) $ satisfying the condition (A): intersection of finite number of $ s\lambda $-open sets is $ s\lambda $-open, 
then $ \overline{s(\bigcup_{E\in\mathcal{E}}E)_\lambda}=\bigcup_{E\in \mathcal{E}}\overline{sE_\lambda} $.
\end{lemma}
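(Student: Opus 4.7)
\medskip
\noindent\textbf{Proof plan for Lemma \ref{31}.} The plan is to prove the two inclusions separately, with one direction being essentially automatic from monotonicity and the other requiring the full force of the local finiteness together with assumption~(A).

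For the inclusion $\bigcup_{E\in\mathcal{E}}\overline{sE_\lambda}\subseteq \overline{s(\bigcup_{E\in\mathcal{E}}E)_\lambda}$, I would simply note that for each fixed $E\in\mathcal{E}$ we have $E\subseteq \bigcup_{E'\in\mathcal{E}}E'$, and then apply Theorem~\ref{5}(3) (monotonicity of the $s\lambda$-closure, which is stated there to hold analogously for $s\lambda$) to conclude $\overline{sE_\lambda}\subseteq \overline{s(\bigcup_{E'\in\mathcal{E}}E')_\lambda}$. Taking union over $E\in\mathcal{E}$ gives the inclusion. Neither local finiteness nor condition~(A) is used here.

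For the reverse inclusion $\overline{s(\bigcup_{E\in\mathcal{E}}E)_\lambda}\subseteq \bigcup_{E\in\mathcal{E}}\overline{sE_\lambda}$, I would fix $y\in \overline{s(\bigcup_{E\in\mathcal{E}}E)_\lambda}$ and argue by contradiction, assuming $y\notin \overline{sE_\lambda}$ for every $E\in\mathcal{E}$. By local finiteness of $\mathcal{E}$, I pick an $s\lambda$-open set $U$ containing $y$ that meets only finitely many members of $\mathcal{E}$, say $E_1,\ldots,E_n$ (if $U$ is only an $s\lambda$-neighbourhood in the weaker sense, I shrink to an $s\lambda$-open subset containing $y$). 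For each $i\in\{1,\ldots,n\}$, since $y\notin \overline{sE_{i,\lambda}}$, I can choose an $s\lambda$-open set $V_i$ with $y\in V_i$ and $V_i\cap E_i=\emptyset$ (this is just the characterization of non-adherence points from Definition~\ref{4}). Now I set
\[
W \;=\; U\cap V_1\cap V_2\cap\cdots\cap V_n.
\]
By condition~(A), $W$ is $s\lambda$-open, and clearly $y\in W$. The set $W$ is disjoint from every $E_i$ (because $W\subseteq V_i$) and is also disjoint from every $E\in\mathcal{E}\setminus\{E_1,\ldots,E_n\}$ (because $W\subseteq U$ and $U$ does not meet such $E$). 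Therefore $W\cap \bigcup_{E\in\mathcal{E}}E=\emptyset$, which contradicts $y\in \overline{s(\bigcup_{E\in\mathcal{E}}E)_\lambda}$.

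The routine bookkeeping (both inclusions, the contradiction set-up) is easy; the one place that could trip us up is making sure the $s\lambda$-open witness $W$ around $y$ is actually $s\lambda$-open. This is precisely the role of assumption~(A), since the excerpt emphasises in Remark~\ref{10} that intersections of $s\lambda$-open sets need not in general be $s\lambda$-open. Once (A) is invoked to guarantee that the finite intersection $U\cap V_1\cap\cdots\cap V_n$ is $s\lambda$-open, the rest of the argument goes through as in the classical topological proof.
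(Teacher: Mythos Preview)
Your proof is correct and follows essentially the same route as the paper's: one inclusion by monotonicity, and for the reverse you take a point in the closure of the union, use local finiteness to cut down to finitely many relevant members $E_1,\dots,E_n$, and then build an $s\lambda$-open set around $y$ missing all of them via a finite intersection justified by condition~(A). The only cosmetic difference is that the paper uses the canonical witnesses $Y-\overline{s(E_i)_\lambda}$ (writing the resulting set as $V-\bigcup_{i=1}^n\overline{s(E_i)_\lambda}$) rather than your arbitrary $V_i$, and frames the argument as a case split rather than a global contradiction; the substance is identical.
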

\begin{proof}
Let $ K=\bigcup_{E\in \mathcal{E}}E $. It can be easily proved that $ \bigcup_{E\in \mathcal{E}}\overline{sE_\lambda} \subset \overline{sK_\lambda} $.

Conversely, let $ y\in \overline{sK_\lambda} $ and $ V $ be a $ s\lambda $-neighbourhood of $ y $ that intersects only finitely many elements of $ \mathcal{E} $, say, $ E_1,.....,E_n$. If  $ y $ belongs to one of the sets $ \overline{s(E_1)_\lambda},........., \overline{s(E_n)_\lambda} $, then $ y\in \bigcup_{E\in \mathcal{E}}\overline{sE_\lambda} $. Otherwise, the set $ V-\overline{s(E_1)_\lambda}- ........- \overline{s(E_n)_\lambda}=V-\bigcup^n_{i=1}\overline{s(E_i)_\lambda}=V\bigcap(Y-\bigcup^n_{i=1}\overline{s(E_i)_\lambda})$ would be a $ s\lambda $-neighbourhood of $ y $ that intersects no element of $ \mathcal{E} $ and hence does not intersect $ K $ which is a contradiction to the supposition that $ y\in \overline{sK_\lambda} $. Hence the result follows.
\end{proof}

\begin{theorem}\label{32}
Every $ s\lambda $-paracompact and $ s\lambda $-Hausdorff GTspace $ (Y,\gamma) $ satisfying the condition (A) mentioned in Lemma \ref{31} is $ s\lambda $-normal.
\end{theorem}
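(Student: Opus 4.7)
The strategy would be a two-step argument: first promote $s\lambda$-Hausdorff to $s\lambda$-regularity using paracompactness, and then promote $s\lambda$-regularity to $s\lambda$-normality by the same device. Both steps will rest on Lemma \ref{31}, which is why condition (A) is assumed.

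For the regularity step, I would fix a $s\lambda$-closed set $B$ and a point $p \notin B$, and for each $b \in B$ use $s\lambda$-Hausdorffness to pick disjoint $s\lambda$-open sets $U_b \ni b$ and $V_b \ni p$. The family $\{U_b : b\in B\}\cup\{Y\setminus B\}$ is a $s\lambda$-open cover of $Y$, so paracompactness supplies a locally finite $s\lambda$-open refinement $\mathcal{C}$ covering $Y$. Taking $\mathcal{D}=\{C\in\mathcal{C}:C\cap B\neq\emptyset\}$, each such $C$ must lie in some $U_b$ (it cannot be contained in $Y\setminus B$), and since $V_b\ni p$ is a $s\lambda$-neighbourhood of $p$ disjoint from $U_b\supset C$, one obtains $p\notin\overline{sC_\lambda}$. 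Setting $E=\bigcup\mathcal{D}$, Lemma \ref{31} gives $\overline{sE_\lambda}=\bigcup_{C\in\mathcal{D}}\overline{sC_\lambda}$, so $p\notin\overline{sE_\lambda}$. Thus $B\subset E$, $F:=Y\setminus\overline{sE_\lambda}$ is a $s\lambda$-open set containing $p$, and $E\cap F=\emptyset$. By the equivalence (8) of Theorem \ref{15}, $(Y,\gamma)$ is $s\lambda$-regular.

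For the normality step, I would take disjoint $s\lambda$-closed sets $A,B$ and invoke regularity in the equivalent form (2) of Theorem \ref{15} to obtain, for each $a\in A$, a $s\lambda$-open $U_a$ with $a\in U_a\subset\overline{s(U_a)_\lambda}\subset Y\setminus B$. Paracompactness applied to $\{U_a:a\in A\}\cup\{Y\setminus A\}$ gives a locally finite $s\lambda$-open refinement covering $Y$; restricting to those members meeting $A$ yields a locally finite family $\mathcal{D}'$ whose union $E$ contains $A$ and each of whose members lies in some $U_a$. Lemma \ref{31} then yields $\overline{sE_\lambda}=\bigcup_{C\in\mathcal{D}'}\overline{sC_\lambda}\subset Y\setminus B$, so $A\subset E\subset\overline{sE_\lambda}\subset Y\setminus B$, and condition (3) of Theorem \ref{29} delivers $s\lambda$-normality.

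The main subtlety, and where I expect to have to be careful, is that the paper's own definitions of $s\lambda$-regular and $s\lambda$-normal (Definitions \ref{14} and \ref{28}) demand disjoint $s\lambda$-closures rather than merely disjoint $s\lambda$-open neighbourhoods — a genuinely stronger condition in a GT-space where closure is not additive (Remark \ref{25A}). The paracompactness argument naturally produces only the weaker separation, so the proof must route through the equivalent characterisations Theorem \ref{15}(8) and Theorem \ref{29}(3), which ask only for disjointness of the neighbourhoods themselves. A secondary point to verify is that any member $C$ of the restricted refinement meeting $B$ (respectively $A$) cannot sit inside $Y\setminus B$ (respectively $Y\setminus A$) and so must lie in one of the "good" cover elements $U_b$ (respectively $U_a$); this is routine once one fixes, for each such $C$, a single element of the original cover containing it.
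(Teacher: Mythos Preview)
Your proposal is correct and follows essentially the same approach as the paper: a two-step bootstrap from $s\lambda$-Hausdorff to $s\lambda$-regular to $s\lambda$-normal, each step building the separating set as the union of a locally finite subfamily of a paracompact refinement and invoking Lemma~\ref{31} to control its $s\lambda$-closure, then concluding via Theorem~\ref{15}(8) and Theorem~\ref{29}(3) respectively. Your explicit remark about routing through the weaker equivalent characterisations (rather than verifying the disjoint-closures definitions directly) is exactly the point the paper handles implicitly.
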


\begin{proof}
Let $ (Y, \gamma) $ be a $ s\lambda $-paracompact and $ s\lambda $-Hausdorff GTspace. First we will prove that $ (Y, \gamma) $ is a $ s\lambda $-regular GTspace. Let $ y\in Y $ and $ P $ be a $ s\lambda $-closed subset of $ Y $ such that $ y\not\in P $. Now take any point $ p\in P $, then by $ s\lambda $-Hausdorff condition, there are $ s\lambda $-open sets $ U_y $ and $ U_p $ containing $ y $ and $ p $ respectively such that $ U_y\cap U_p=\emptyset $. So, we can say that $ y $ is not a $ s\lambda $-adherence point of $ U_p $. Hence $ \overline{s(U_p)_\lambda}\cap \{y\}=\emptyset $. So the collection of $ \{U_p:p\in P\} $ together with $ s\lambda $-open set $ Y-P $ forms a $ s\lambda $-oepn cover for $ Y $. Since $ Y $ is $ s\lambda $-paracompact, there exists a locally finite $ s\lambda $-open refinement $ \mathcal{A} $ that covers $ Y $. Now condsider the subcollection $ \mathcal{B} $ of $ \mathcal{A} $ consisting of all those element of $ \mathcal{A} $ that intersects $ P $. Then $ \mathcal{B} $ covers $ P $. Again, if $ B\in  \mathcal{B} $, then $ \overline{s B_\lambda}\cap \{y\}=\emptyset $. Let $ V=\bigcup_{B\in  \mathcal{B}}B $; then $ V $ is a $ s\lambda $-open set in $ Y $ containing $ P $. By Lemma \ref{31}, we have $ \overline{sV_\lambda}=\overline{s(\bigcup_{B\in  \mathcal{B}}B)_\lambda}=\bigcup_{B\in \mathcal{B}}\overline{sB_\lambda} $ which implies that $ \overline{sV_\lambda}\cap \{y\}=\emptyset $. So $ s\lambda $-closure of $ V $ is disjoint from $ y $ i.e. there exists $ s\lambda $-open set $ U_y $ containing $ y $ such that $ U_y\cap V=\emptyset $. Thus by Theorem \ref{15} (8), the GTspace $ (Y,\gamma) $ is $ s\lambda $-regular. 

Now to prove $ s\lambda $-normality, let $ P $ and $ Q $ be a pair of disjoint $ s\lambda $-closed sets and $ p\in P $. Then by $ s\lambda $-regularity criterion, there is a $ s\lambda $-open set $ U_p $ containing $ p $  whose $ s\lambda $-closure is disjoint from $ Q $ i.e. $ \overline{s(U_p)_\lambda}\cap Q=\emptyset $. So for each $ p\in P $ as $ p $ runs over $ P $ we have, by adopting similar process as above, a $ s\lambda $-open set $ V $ in $ Y $ containing $ P $ such that $ \overline{sV_\lambda}\cap Q=\emptyset $. On the other hand, $ P\subset Y-Q $, a $ s\lambda $-open set and we get a $ s\lambda $-open set $ V $ such that $ P\subset V\subset \overline{sV_\lambda}\subset Y-Q $, then by Theorem \ref{29} (3), the space $ (Y,\gamma) $ is $ s\lambda $-normal.
\end{proof}

\section{\bf $ s\lambda T_{2\frac{1}{2}} $ GTspace, $ s\lambda T_{3\frac{1}{2}} $ GTspace and $ s\lambda $-completely normal GTspace}

In this section we investigate some properties of $ s\lambda T_{2\frac{1}{2}} $ axiom and $ s\lambda T_{3\frac{1}{2}} $ axiom with the help of sets separated by real valued $ s\lambda $-continuous function.

\begin{definition}\label{33}(c.f.\cite{AA}).
A function $ f: (X,\gamma_1)  \longrightarrow (Y,\gamma_2) $ is said to be $ s\lambda $-continuous if  inverse image of each $ \gamma_2 $-open set $ V $ is $ s\lambda $-open in $(X,\gamma_1)  $.

Clearly, a function $ f: (X,\gamma_1)  \longrightarrow (Y,\gamma_2) $ is said to be $ s\lambda $-continuous if  inverse image of each $ \gamma_2 $-closed set $ P $ is $ s\lambda $-closed in $(Y,\gamma_1)  $.
\end{definition}

\begin{definition}\label{34}(c.f.\cite{CG}). 
Two sets $ E,F $ of $ (Y,\gamma) $ are said to be separated by a $ s\lambda $-continuous real valued function if there exists a $ s\lambda $-continuous real valued function $ f $ on $ Y $ such that $ f(y)=0 $ for all $ y\in E $, $ f(y)=1 $ for all $ y\in F $ and $ 0\leq f(y)\leq 1 $ for all $ y\in Y $.
\end{definition}

\begin{theorem}\label{35} 
If two sets $ A,B $ in a GTspace $ (Y, \gamma) $ are separated by a $ s\lambda $-continuous real valued function $ f $ on $ Y $ they are separated by disjoint $ s\lambda $-closed neighbourhoods in $ (Y,\gamma) $.
\end{theorem}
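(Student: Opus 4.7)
The plan is to imitate the classical topological argument, using the intervals $[0,1/3]$ and $[2/3,1]$ in $\mathbb{R}$ to produce the required $s\lambda$-closed neighbourhoods. Since $f:Y\to \mathbb{R}$ is $s\lambda$-continuous, the inverse image of every closed (resp.\ open) subset of $\mathbb{R}$ is $s\lambda$-closed (resp.\ $s\lambda$-open) in $(Y,\gamma)$ by Definition \ref{33}. This is the only property of $f$ I shall use, apart from the three equalities $f(A)\subset\{0\}$, $f(B)\subset\{1\}$, $0\le f(y)\le 1$.

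I would set $E=f^{-1}([0,1/3])$ and $F=f^{-1}([2/3,1])$. The intervals $[0,1/3]$ and $[2/3,1]$ are closed in $\mathbb{R}$, hence $E$ and $F$ are $s\lambda$-closed in $Y$. They are disjoint because $[0,1/3]\cap [2/3,1]=\emptyset$. It remains to show that $E$ contains $A$ in the $s\lambda$-interior sense (so $E$ is a $s\lambda$-neighbourhood of $A$) and likewise that $F$ is a $s\lambda$-neighbourhood of $B$; this is where the separating open intervals come in.

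Consider the open sets $(-\infty,1/3)$ and $(2/3,\infty)$ of $\mathbb{R}$. By $s\lambda$-continuity, $U=f^{-1}((-\infty,1/3))$ and $V=f^{-1}((2/3,\infty))$ are $s\lambda$-open in $Y$. Since $f\equiv 0$ on $A$ and $f\equiv 1$ on $B$, we have $A\subset U\subset E$ and $B\subset V\subset F$. Thus $A\subset sInt_\lambda(E)$ and $B\subset sInt_\lambda(F)$, i.e.\ $E$ and $F$ are $s\lambda$-neighbourhoods of $A$ and $B$ respectively, and they are disjoint $s\lambda$-closed sets, completing the proof.

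The only mild subtlety I anticipate is the reading of the phrase ``separated by disjoint $s\lambda$-closed neighbourhoods'': one must interpret a $s\lambda$-neighbourhood of a set $A$ (in the sense of Note \ref{7A}) as a set which contains $A$ within some $s\lambda$-open subset of itself, and this is exactly what the auxiliary sets $U$ and $V$ provide. No new additivity or intersection properties of $s\lambda$-open/closed sets are needed, so the argument should go through in a generalized topological space without any of the obstructions noted in Remark \ref{25A}.
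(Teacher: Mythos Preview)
Your proof is correct and follows essentially the same approach as the paper's own argument: both pull back nested open and closed intervals of the real line under the $s\lambda$-continuous function $f$ to obtain the required $s\lambda$-open sets inside disjoint $s\lambda$-closed sets. The only differences are cosmetic---the paper works in $I=[0,1]$ with the relative topology and uses the cut points $1/4$ and $3/4$ (taking the half-open intervals $[0,1/4)$, $(3/4,1]$ and the closed intervals $[0,1/4]$, $[3/4,1]$), whereas you work directly in $\mathbb{R}$ with the cut points $1/3$ and $2/3$; since $f(Y)\subset[0,1]$ these choices are interchangeable.
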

\begin{proof}
Suppose the conditions hold, then $ f(y)=0 $ for all $ y\in A $, $ f(y)=1 $ for all $ y\in B $ and $ 0\leq f(y)\leq 1 $ for all $ y\in Y $. Let $ I $ denote the closed interval $ [0, 1] $ and $ \sigma_I $ denote the relative topology on $ I $ of usual topology $\sigma$ of the real number space $ (R,\sigma) $. Then clearly $ \sigma_I $ is also a generalized topology on $ I $. Now half open intervals $ [0, \frac{1}{4}) $ and $ (\frac{3}{4}, 1]$ are two disjoint open sets in $(I, \sigma_I)$ as well as two disjoint open sets in $ \sigma $. Since $ f $ is $ s\lambda $-continuous on $ (Y,\gamma) $ with values in $ I $, then $ f^{-1}[0, \frac{1}{4}) $  and $ f^{-1}(\frac{3}{4}, 1] $ are two disjoint $ s\lambda $-open sets in $ (Y,\gamma) $; also $ A\subset f^{-1}[0, \frac{1}{4}) $  and $ B\subset f^{-1}(\frac{3}{4}, 1] $. Again, $ [0, \frac{1}{4}] $ and $ [\frac{3}{4}, 1]$ are two disjoint closed sets in $(I, \sigma_I)$ as well as two disjoint closed sets in $ \sigma $. Since $ f $ is $ s\lambda $-continuous, then $ f^{-1}[0, \frac{1}{4}] $ and $  f^{-1}[\frac{3}{4}, 1] $ are two disjoint $ s\lambda $-closed sets in $ (Y,\gamma) $ containing the $ s\lambda $-open sets $ f^{-1}[0, \frac{1}{4}) $  and $ f^{-1}(\frac{3}{4}, 1] $ respectively. Hence the result follows. 
\end{proof}

\begin{theorem}\label{36A}
A GTspace $ (Y,\gamma) $ is $ s\lambda $-normal if every pair of disjoint $ s\lambda $-closed sets $ A,B $ of $ Y $ are separated by a $ s\lambda $-continuous real valued function $ f $ on $ Y $.
\end{theorem}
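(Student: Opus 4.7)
The plan is to deduce $s\lambda$-normality directly from the hypothesis by invoking Theorem \ref{35} to convert the function-theoretic separation into a set-theoretic one whose closures are automatically disjoint.

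First I would take an arbitrary pair of disjoint $s\lambda$-closed sets $A, B$ in $Y$. By the hypothesis of the theorem, there exists a $s\lambda$-continuous real valued function $f$ on $Y$ with $f(y)=0$ for $y\in A$, $f(y)=1$ for $y\in B$, and $0\leq f(y)\leq 1$ for all $y\in Y$, so in particular $A$ and $B$ are separated in the sense of Definition \ref{34}. I would then appeal to Theorem \ref{35} to conclude that $A$ and $B$ are separated by disjoint $s\lambda$-closed neighbourhoods; concretely, the same function produces the two disjoint $s\lambda$-open sets $E = f^{-1}[0,\tfrac{1}{4})$ and $F = f^{-1}(\tfrac{3}{4}, 1]$ containing $A$ and $B$ respectively, together with the disjoint $s\lambda$-closed sets $P = f^{-1}[0,\tfrac{1}{4}]$ and $Q = f^{-1}[\tfrac{3}{4}, 1]$ that contain $E$ and $F$ respectively.

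Next I would exploit the fact that $\overline{sE_\lambda}$ is the smallest $s\lambda$-closed set containing $E$ (Theorem \ref{5}(1)). Since $E \subset P$ with $P$ being $s\lambda$-closed, we obtain $\overline{sE_\lambda} \subset P$. By the same token $\overline{sF_\lambda} \subset Q$. Since $P \cap Q = \emptyset$ (the preimages of the disjoint closed intervals $[0,\tfrac{1}{4}]$ and $[\tfrac{3}{4},1]$), it follows immediately that $\overline{sE_\lambda} \cap \overline{sF_\lambda} = \emptyset$. Thus the $s\lambda$-open sets $E,F$ containing $A, B$ respectively satisfy the defining condition of $s\lambda$-normality (Definition \ref{28}), and the proof concludes.

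There is no real obstacle here; the statement is essentially a corollary of Theorem \ref{35}. The only point that requires care is that the $s\lambda$-closures of the open neighbourhoods $E, F$ are genuinely disjoint, and this is secured precisely because Theorem \ref{35} delivers $s\lambda$-closed (not merely $s\lambda$-open) neighbourhoods which act as ceiling sets containing these closures.
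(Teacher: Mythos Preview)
Your proposal is correct and follows essentially the same approach as the paper: both proofs simply invoke Theorem \ref{35} to pass from functional separation to separation by disjoint $s\lambda$-closed neighbourhoods. Your version is in fact more complete, since you explicitly verify that the $s\lambda$-closures $\overline{sE_\lambda}$ and $\overline{sF_\lambda}$ are disjoint (via containment in the disjoint $s\lambda$-closed sets $P$ and $Q$), whereas the paper's one-sentence proof leaves this step to the reader.
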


\begin{proof}
Suppose $ (Y,\gamma) $ is a GTspace satisfying the given conditions. Then by Theorem \ref{35}, $ s\lambda $-closed sets $ A,B\subset Y $ are separated by disjoint $ s\lambda $-closed neighbourhoods and so the GTspace is $ s\lambda $-normal.
\end{proof}

\begin{theorem}\label{36}(c.f.\cite{CG}).  Every pair of disjoint $ s\lambda $-closed sets $ A,B $ of $ Y $ are separated by a $ s\lambda $-continuous real valued function $ f $ on $ Y $ if  $ (Y,\gamma) $ is a $ s\lambda $-normal GTspace satisfying the condition (A) mentioned in Lemma \ref{31}.
\end{theorem}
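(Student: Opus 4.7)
The plan is to adapt the classical Urysohn Lemma to the generalized topology setting, exploiting the interpolation property furnished by Theorem \ref{29}(3) and using condition (A) at the end to handle intersections. The goal is to construct a family of $s\lambda$-open sets $\{U_r\}$ indexed by dyadic rationals $r$ in $[0,1]$ with the strict nesting $\overline{s(U_r)_\lambda}\subset U_s$ whenever $r<s$, and then to define the separating function by $f(y)=\inf\{r:y\in U_r\}$.

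First I would build the family. Since $A$ and $B$ are disjoint $s\lambda$-closed sets, $A\subset Y\setminus B$, a $s\lambda$-open set; set $U_1=Y\setminus B$. By Theorem \ref{29}(3) applied to the pair $A\subset U_1$, there is a $s\lambda$-open set $U_{1/2}$ with $A\subset U_{1/2}\subset \overline{s(U_{1/2})_\lambda}\subset U_1$. Inductively on $n$, once $U_{k/2^n}$ have been chosen for $0<k<2^n$, each new dyadic $r=(2k-1)/2^{n+1}$ lies strictly between consecutive previously-chosen levels $p=(k-1)/2^n$ and $q=k/2^n$; applying Theorem \ref{29}(3) to the $s\lambda$-closed set $\overline{s(U_p)_\lambda}$ contained in the $s\lambda$-open set $U_q$ produces a $s\lambda$-open $U_r$ with $\overline{s(U_p)_\lambda}\subset U_r\subset \overline{s(U_r)_\lambda}\subset U_q$. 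Setting $U_r=\emptyset$ for rational $r<0$ and $U_r=Y$ for rational $r>1$, define
$$f(y)=\inf\{r:y\in U_r,\ r\text{ dyadic rational}\}.$$
Then $0\leq f(y)\leq 1$ for all $y$; if $y\in A$ then $y\in U_r$ for every dyadic $r>0$, giving $f(y)=0$; if $y\in B$ then $y\notin U_1=Y\setminus B$, so $y\notin U_r$ for any dyadic $r\leq 1$, while $y\in U_r$ for dyadic $r>1$, giving $f(y)=1$.

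It remains to verify $s\lambda$-continuity. The standard computations yield
$$f^{-1}\bigl((-\infty,a)\bigr)=\bigcup_{r<a}U_r,\qquad f^{-1}\bigl((a,\infty)\bigr)=\bigcup_{r>a}\bigl(Y\setminus \overline{s(U_r)_\lambda}\bigr),$$
where both unions are over dyadic rationals. Each union is a union of $s\lambda$-open sets and is therefore $s\lambda$-open. Since every $\sigma$-open subset of $\mathbb{R}$ is a union of open intervals and each open interval $(a,b)=(-\infty,b)\cap(a,\infty)$, condition (A)---finite intersections of $s\lambda$-open sets are $s\lambda$-open---guarantees that the preimage of any $\sigma$-open set is $s\lambda$-open, which is exactly $s\lambda$-continuity in the sense of Definition \ref{33}. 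I expect the main obstacle to be the bookkeeping in the recursive construction of $\{U_r\}$, ensuring that the strict nesting $\overline{s(U_r)_\lambda}\subset U_s$ is preserved at every stage; once that family is in hand, the two displayed set equalities and the invocation of condition (A) are the only genuinely new ingredients beyond the classical argument.
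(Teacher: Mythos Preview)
Your proposal is correct and follows essentially the same Urysohn-lemma strategy as the paper: both construct a dyadically indexed family of $s\lambda$-open sets by iterating Theorem~\ref{29}(3), define $f$ as the infimum of the indices, and invoke condition~(A) to handle the one place where a finite intersection of $s\lambda$-open sets must again be $s\lambda$-open. The only difference is cosmetic, in the continuity check: the paper argues pointwise via three cases ($f(q)=0$, $f(q)=1$, $0<f(q)<1$), using condition~(A) in the third case to see that the neighbourhood $V(\tfrac{r+1}{2^n})\cap\bigl(Y\setminus\overline{s(V(\tfrac{r-1}{2^n}))_\lambda}\bigr)$ is $s\lambda$-open, whereas you compute the subbasic preimages $f^{-1}((-\infty,a))$ and $f^{-1}((a,\infty))$ as unions of $s\lambda$-open sets and then apply condition~(A) once to pass to arbitrary open preimages. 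Both routes are standard and yield the same conclusion.
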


\begin{proof}
Let  $ A, B $ be a pair of disjoint $ s\lambda $-closed sets of a $ s\lambda $-normal GTsapce $ (Y,\gamma) $ satisfying the condition (A) mentioned in Lemma \ref{31}. Then $ A\subset  Y-B $, a $ s\lambda $-open set and so by Theorem \ref{29} (3), there exists a $ s\lambda $-open set denoted by $ V(\frac{1}{2}) $ such that $ A\subset V(\frac{1}{2})$ and $ \overline{s(V(\frac{1}{2}))_\lambda}\subset Y-B $. 
Since $ A $ and $ \overline{s(V(\frac{1}{2}))_\lambda} $ are $ s\lambda $-closed sets contained in  $ s\lambda $-open sets $ V(\frac{1}{2}) $ and $ Y-B $ respectively then again by Theorem \ref{29} (3), there exist $ s\lambda $-open sets $ V(\frac{1}{4}) $ and $ V(\frac{3}{4})$ such that 
$ A\subset V(\frac{1}{4})$ and $ \overline{s(V(\frac{1}{4}))_\lambda}\subset V(\frac{1}{2})$ and $ \overline{s(V(\frac{1}{2}))_\lambda}\subset V(\frac{3}{4}) $ and $ \overline{s(V(\frac{3}{4}))_\lambda}\subset Y-B $. Repeating the process we have $ s\lambda $-open sets viz. $ V(\frac{r}{2^n}) $ for a fixed $ n $ and $ r=1,2,......., 2^n-1 $ such that $ A\subset V(\frac{1}{2^n}) $ and $ \overline{s(V(\frac{r}{2^n}))_\lambda}\subset V(\frac{r+1}{2^n}) $ where $ V(1)=Y-B $.  So, for each value of $ r $ there exists a $ s\lambda $-open set $ V(\frac{2r+1}{2^{n+1}}) $ such that $ \overline{s(V(\frac{r}{2^n}))_\lambda}\subset V(\frac{2r+1}{2^{n+1}}) $ and $ \overline{s(V(\frac{2r+1}{2^{n+1}}))_\lambda}\subset V(\frac{r+1}{2^n}) $. Thus, for every dyadic proper fraction $ \frac{r}{2^n} $ there exists a $ s\lambda $-open set $ V(\frac{r}{2^n}) $ with the properties that

(a): $ A $ is contained in each $ V(\frac{r}{2^n}) $; 
\qquad   
(b): each $ \overline{s(V(\frac{r}{2^n}))_\lambda} $ is contained in $ Y-B $ and 

(c): if $ \frac{r}{2^n}<  \frac{r'}{2^m} $ then $ \overline{s(V(\frac{r}{2^n}))_\lambda}\subset  V(\frac{r'}{2^m}) $.

Now we define a function $ f $ on $ Y $ with values in $ I=[0,1] $ as:

$ f(y)= $ the greatest lower bound of all those numbers  $ \frac{r}{2^n}$ if $ y\in V(\frac{r}{2^n}) $ and 

$ f(y)=1 $ if $ y $ does not belong to any $ V(\frac{r}{2^n}) $. Then
 
(i) for any point $ y $ in $ Y, 0\leq f(y)\leq 1 $ since $ 0< \frac{r}{2^n}<1$;

(ii) for any point $ y\in A, y $ belongs to every $V(\frac{r}{2^n}) $, so $ f(y) =0 $; and 

(iii) for any point $ y\in B, y $ does not belong to any $ V(\frac{r}{2^n}) $, so $ f(y)=1 $.

Next we shall prove the $ s\lambda $-continuity of $ f $. Let $ q $ be any point in $ Y $ and $ \epsilon $ be a pre-assigned positive number. Then following  cases may be considered:

Case-1:  Let $ f(q)=0 $ and $ m $ be a positive integer such that $ \frac{1}{2^m}<\epsilon $ and let $ H=V(\frac{1}{2^m}) $. Then $ H $ is a $ s\lambda $-open set containing $ q $. Also for any point $ y\in H, f(y)\leq \frac{1}{2^m}<\epsilon $. Since $ 0\leq f(y) $, it follows that $ \mid f(y)-f(q)\mid =f(y)< \epsilon $. So $ f $ is $ s\lambda $-continuous at $ q $.

Case-2 : Let $ f(q) =1 $ and $ m $ be a positive integer such that $ \frac{1}{2^m}<\epsilon $ and let $ H=Y-\overline{s(V(\frac{2^m-1}{2^m}))_\lambda} $. Then $ H $ is a $ s\lambda $-open set containing $ q $, for, $ q\in \overline{s(V(\frac{2^m-1}{2^m}))_\lambda} \subset V(\frac{2^{m+1}-1}{2^{m+1}})$ then $ f(q) \leq \frac{2^{m+1}-1}{2^{m+1}}<1 $ which contradicts to the assumption. Now for any point $ y\in 
H $, we must have $ y\not\in \overline{s(V(\frac{2^m-1}{2^m}))_\lambda} $ so that $ y\not\in V(\frac{2^m-1}{2^m}) $. Hence $ y\not\in V(\frac{r}{2^n}) $ whenever $ \frac{r}{2^n}< \frac{2^m-1}{2^m} $. So, $ f(y) \geq \frac{2^m-1}{2^m}=1-\frac{1}{2^m}> 1-\epsilon $. But $ f(y)\leq 1 $. Hence $ \mid f(q) -f(y)\mid =1-f(y)< \epsilon$ and so  $ f $ is $ s\lambda $-continuous at $ q $.

Case-3 : Let $ 0< f(q) <1 $ and $ n $ be a positive integer such that $ \frac{1}{2^{n-1}}<\epsilon $ and $ \frac{r}{2^n}<f(q)<\frac{r+1}{2^n}<1 $ for a suitable positive integer $ r $. Then $ q\not\in V(\frac{r}{2^n}) $ and therefore $ V\not\in V(\frac{r-1}{2^n}) $ but $ q\in V(\frac{r+1}{2^n}) $. Now let $ H=V(\frac{r+1}{2^n})-\overline{s(V(\frac{r-1}{2^n}))_\lambda}= V(\frac{r+1}{2^n}) \cap (Y-\overline{s(V(\frac{r-1}{2^n}))_\lambda}) $, then by condition (A), $ H $ is a $ s\lambda $-open set containing $ q $, since $ \overline{s(V(\frac{r-1}{2^n}))_\lambda}\subset V(\frac{r+1}{2^n}) $ and $ q\not\in V(\frac{r}{2^n}) $ and $ \overline{s(V(\frac{r-1}{2^n}))_\lambda}\subset V(\frac{r}{2^n}) $. Also for any point $ y\in H, y\in V(\frac{r+1}{2^n}) $ and $y\not\in \overline{s(V(\frac{r-1}{2^n}))_\lambda} $ and so $ y\not\in V(\frac{r-1}{2^n}) $. Therefore from the definition of the function $ f $ it reveals that 
$ \frac{r-1}{2^n}\leq f(y)\leq \frac{r+1}{2^n} $, so $ \mid f(y)-f(q)\mid \leq \frac{1}{2^{n-1}}< \epsilon $. Hence $ f $ is $ s\lambda $-continuous at $ q $.

Thus $ f $ is $ s\lambda $-continuous at every point $ q $ in $ Y $ and so $ s\lambda $-continuous on $ Y $. Hence $ A, B $ are separated by a $ s\lambda $-continuous real valued function $ f $ on $ Y $. 
\end{proof}

\begin{definition}\label{37}(c.f.\cite{SS}).
A GTspace $ (Y,\gamma) $ is said to satisfy $ s\lambda T_{2\frac{1}{2}} $ axiom if for every pair of distinct points $ p, q\in Y $, there exist $ s\lambda $-open sets $ E,F $ containing $ p, q $ respectively such that $ \overline{sE_\lambda}\cap \overline{sF_\lambda}=\emptyset $.
The GTspace which satisfies $ s\lambda T_{2\frac{1}{2}} $ axiom is called $ s\lambda $-Urysohn.
\end{definition}

\begin{definition}\label{38}(c.f. \cite{CG}).
A GTspace $ (Y,\gamma) $ is said to be $ s\lambda $-completely Hausdorff if for every pair of distinct points $ p, q\in Y $, there exists a $ s\lambda $-continuous real valued function $ f $ on $ Y $ such that $ f(p)=0, f(q)=1 $ and $ 0\leq f(y)\leq 1 $ for all $ y\in Y $ i.e.  every pair of distinct points in $ Y $ can be separated by a $ s\lambda $-continuous real valued function on $ Y $.
\end{definition}

\begin{theorem}\label{39} 

 (i):
Every $ s\lambda $-completely Hausdorff GTspace $ (Y,\gamma) $ is $ s\lambda $-Urysohn.
 
(ii) Every $ s\lambda $-Urysohn GTspace$ (Y,\gamma) $ is $ s\lambda $-Hausdorff.

(iii)  $ s\lambda $-Urysohn and $ s\lambda $-normal GTspace $ (Y,\gamma) $  satisfying the condition (A) as mentioned in Lemma \ref{31}  is $ s\lambda $-completely Hausdorff.
\end{theorem}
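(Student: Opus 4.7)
The plan is to deduce all three parts from Theorems \ref{35} and \ref{36} already established above, so that the only work is identifying the right $s\lambda$-open sets and $s\lambda$-continuous function in each case.

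For part (i), given distinct $p,q \in Y$, the $s\lambda$-completely Hausdorff hypothesis yields an $s\lambda$-continuous $f : Y \to [0,1]$ with $f(p)=0$ and $f(q)=1$. I would then imitate the construction from the proof of Theorem \ref{35}: put $E = f^{-1}[0,\tfrac{1}{4})$ and $F = f^{-1}(\tfrac{3}{4},1]$, which are $s\lambda$-open sets containing $p$ and $q$ respectively, and observe that $f^{-1}[0,\tfrac{1}{4}]$ and $f^{-1}[\tfrac{3}{4},1]$ are disjoint $s\lambda$-closed sets by $s\lambda$-continuity of $f$. Using that $\overline{sE_\lambda}$ is the smallest $s\lambda$-closed set containing $E$ (Theorem \ref{5}(1) in its $s\lambda$-version), we get $\overline{sE_\lambda} \subset f^{-1}[0,\tfrac{1}{4}]$ and $\overline{sF_\lambda} \subset f^{-1}[\tfrac{3}{4},1]$, hence $\overline{sE_\lambda} \cap \overline{sF_\lambda} = \emptyset$, which is exactly the $s\lambda$-Urysohn condition from Definition \ref{37}.

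Part (ii) is essentially a one-line observation: if $E,F$ are $s\lambda$-open with $p\in E$, $q\in F$ and $\overline{sE_\lambda}\cap\overline{sF_\lambda} = \emptyset$, then since $E \subset \overline{sE_\lambda}$ and $F \subset \overline{sF_\lambda}$ (Theorem \ref{5}(4) in the $s\lambda$-version), we also have $E\cap F = \emptyset$, so the very same $E,F$ witness the $s\lambda$-Hausdorff property for the pair $p,q$.

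For part (iii), given distinct $p,q \in Y$, I first use the $s\lambda$-Urysohn hypothesis to obtain $s\lambda$-open sets $E,F$ with $p \in E$, $q \in F$, and $\overline{sE_\lambda} \cap \overline{sF_\lambda} = \emptyset$; the two $s\lambda$-closures then form a pair of disjoint $s\lambda$-closed subsets of $Y$. Because $(Y,\gamma)$ is $s\lambda$-normal and satisfies condition (A) of Lemma \ref{31}, Theorem \ref{36} directly produces an $s\lambda$-continuous $f : Y \to [0,1]$ with $f \equiv 0$ on $\overline{sE_\lambda}$ and $f \equiv 1$ on $\overline{sF_\lambda}$. Since $p \in E \subset \overline{sE_\lambda}$ and $q \in F \subset \overline{sF_\lambda}$, this gives $f(p)=0$ and $f(q)=1$ with $0 \le f \le 1$, so $(Y,\gamma)$ is $s\lambda$-completely Hausdorff in the sense of Definition \ref{38}.

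There is no genuine obstacle in the argument; everything reduces to a correct invocation of Theorems \ref{35} and \ref{36} and an unwinding of the definitions. The only point requiring a little care is avoiding circularity between (i) and (iii): one has to note that Theorem \ref{35} requires only the existence of the separating function (no normality), while Theorem \ref{36} is the direction that actually needs $s\lambda$-normality together with condition (A), and it is only this second ingredient that one must bring to bear in (iii).
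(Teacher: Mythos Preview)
Your proposal is correct and follows essentially the same approach as the paper: part (i) is handled exactly as the paper intends by imitating Theorem~\ref{35}, part (ii) is the trivial observation the paper also dismisses as obvious, and part (iii) matches the paper's argument line for line (Urysohn separation to get disjoint $s\lambda$-closures, then Theorem~\ref{36} to produce the separating function). If anything, you supply more detail than the paper, which omits (i) and (ii) entirely.
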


\begin{proof}
(i): Proof is similar to that of the Theorem \ref{35} and so is omitted. 

(ii): It is obvious from Definition.

(iii): Let the condition hold and $ p,q\in Y, p\not=q $. Since the  GTspace is $ s\lambda $-Urysohn, there exist $ s\lambda $-open sets $ E,F $ such that $ p\in E $ and $ q\in F $ and $ \overline{sE_\lambda}\cap \overline{sF_\lambda}=\emptyset $. Since the GTspace is also $ s\lambda $-normal, by Theorem \ref{36}, there exists a $ s\lambda $-continuous real valued function $ f $ on $ Y $ such that   $ f(y)=0 $ if $ y\in \overline{sE_\lambda} $ and  $ f(y) =1 $ if $ y\in \overline{sF_\lambda} $ and $ 0\leq f(y) \leq 1 $ for all $ y\in Y $. So $ f(p)=0 $ and $ f(q)=1 $ and $ 0\leq f(y) \leq 1 $ for all $ y\in Y $. Hence the result follows.
\end{proof}

\begin{theorem}\label{40}
If $ (Y,\gamma) $ is a $ s\lambda$-completely Hausdorff GTspace then following hold:

(1) for each pair of distinct points $ p,q $ of $ Y $,  there exist $ s\lambda $-open set $ E $ and $ sg_\lambda $-open set $ F $ such that $ p\in E, q\in F $ and $ q\in F\subset \overline{sF_\lambda}\subset Y-\overline{sE_\lambda} $;

(2) for each subset $ D $ and each singleton $ \{q\} $ of $ Y $ such that $ D\cap \{q\}=\emptyset $, there exist $ sg_\lambda $-open set $ E $ and $ s\lambda $-open set $ F $ such that $ D\cap E \not=\emptyset, \{q\} \subset F $ and $E\cap F=\emptyset$.
\end{theorem}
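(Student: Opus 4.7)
The plan for both parts is to exploit the $s\lambda$-continuous real valued separating function furnished by the $s\lambda$-completely Hausdorff hypothesis, pull back carefully chosen open and closed subsets of $I=[0,1]$, and then invoke Remark \ref{10} to upgrade/downgrade between $s\lambda$-open and $sg_\lambda$-open sets as required.

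For part (1), fix distinct points $p,q \in Y$. By Definition \ref{38} there is a $s\lambda$-continuous $f : Y \to I$ with $f(p)=0$, $f(q)=1$ and $0 \le f(y) \le 1$ on $Y$. Mirroring the construction in the proof of Theorem \ref{35}, I would put $E := f^{-1}[0,\tfrac14)$ and $F := f^{-1}(\tfrac34,1]$. Since $[0,\tfrac14)$ and $(\tfrac34,1]$ are $\sigma_I$-open (hence obtainable as intersections of $\sigma$-open sets with $I$), both $E$ and $F$ are $s\lambda$-open, with $p\in E$ and $q\in F$. Similarly $f^{-1}[0,\tfrac14]$ and $f^{-1}[\tfrac34,1]$ are disjoint $s\lambda$-closed sets containing $E$ and $F$ respectively, so by Theorem \ref{5}(1) applied in $s\lambda$-form, $\overline{sE_\lambda}\subset f^{-1}[0,\tfrac14]$ and $\overline{sF_\lambda}\subset f^{-1}[\tfrac34,1]$, giving $\overline{sE_\lambda}\cap \overline{sF_\lambda}=\emptyset$, i.e.\ $\overline{sF_\lambda}\subset Y-\overline{sE_\lambda}$. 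Finally, since every $s\lambda$-open set is $sg_\lambda$-open by Remark \ref{10}, $F$ qualifies as the desired $sg_\lambda$-open set and the chain $q\in F\subset \overline{sF_\lambda}\subset Y-\overline{sE_\lambda}$ holds.

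For part (2), assume $D\neq\emptyset$ (otherwise the required conclusion $D\cap E\neq\emptyset$ is vacuously unattainable, so this is the intended hypothesis). Pick any $p\in D$; since $D\cap\{q\}=\emptyset$ we have $p\neq q$. Apply $s\lambda$-complete Hausdorffness to this pair to obtain $f:Y\to I$ with $f(p)=0$ and $f(q)=1$, and set $E := f^{-1}[0,\tfrac14)$, $F := f^{-1}(\tfrac34,1]$ exactly as above. Then $E,F$ are disjoint $s\lambda$-open sets with $p\in E$ (so $D\cap E\neq\emptyset$) and $q\in F$. Demoting $E$ from $s\lambda$-open to $sg_\lambda$-open via Remark \ref{10} produces the configuration demanded by (2).

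The main work is really done by Theorem \ref{35}; no new obstacle arises beyond reading off the right pullbacks of $[0,\tfrac14)$, $(\tfrac34,1]$, $[0,\tfrac14]$, $[\tfrac34,1]$ and remembering that the $s\lambda$-open $\Rightarrow sg_\lambda$-open direction of Remark \ref{10} is the trivial one that lets the same pair $(E,F)$ fill either role the statement asks for.
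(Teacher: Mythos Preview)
Your proof is correct and follows essentially the same approach as the paper: both obtain $s\lambda$-open sets $E,F$ with $\overline{sE_\lambda}\cap\overline{sF_\lambda}=\emptyset$ and then invoke the trivial implication $s\lambda$-open $\Rightarrow sg_\lambda$-open from Remark~\ref{10}. The only cosmetic difference is that the paper cites Theorem~\ref{39}(i) (completely Hausdorff $\Rightarrow$ Urysohn) as a black box, whereas you unpack that step by pulling back the intervals $[0,\tfrac14)$, $(\tfrac34,1]$, $[0,\tfrac14]$, $[\tfrac34,1]$ directly in the manner of Theorem~\ref{35}.
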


\begin{proof}
(1): Assume that $ (Y,\gamma) $ is a $ s\lambda$-completely Hausdorff GTspace and $ p,q $ are distinct points of $ Y $. Then, by Theorem \ref{39} (i), $ (Y,\gamma) $ is $ s\lambda $-Urysohn GTspace, so there exist $ s\lambda $-open sets $ E,F $ containing $ p,q $ respectively such that $ \overline{sE_\lambda}\cap \overline{sF_\lambda}=\emptyset $. Since $ F $ is also a $ sg_\lambda $-open set, then we have $ q\in F\subset \overline{sF_\lambda}\subset Y-\overline{sE_\lambda} $. 

(2): Suppose  $ (Y,\gamma) $ is a $ s\lambda$-completely Hausdorff GTspace, $ D\subset Y $ and $ q\in Y $ such that $ D\cap \{q\}=\emptyset $. Let $ p\in 
D $, then  $ p\not=q  $. Then, by Theorem \ref{39} (i), there exist $ s\lambda $-open sets $ E,F $ containing $ p,q $ respectively such that $ \overline{sE_\lambda}\cap \overline{sF_\lambda}=\emptyset $. So $ E\cap F=\emptyset $ and $ D\cap E\not=\emptyset, \{q\}\subset F $; $ E $ is also a $ sg_\lambda $-open set. 
\end{proof}

\begin{theorem}\label{41}
If $ (Y,\gamma) $ is $ s\lambda $-regular and $ s\lambda T_1 $ GTspace, then it is $ s\lambda $-Urysohn GTspace.
\end{theorem}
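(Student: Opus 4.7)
The plan is to apply the two hypotheses essentially directly: the $s\lambda T_1$ hypothesis upgrades an arbitrary second point to a $s\lambda$-closed set, and then the (strong) $s\lambda$-regularity definition already delivers exactly the Urysohn-type separation we want.

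More concretely, I would start by picking two distinct points $p, q \in Y$. Since $(Y,\gamma)$ is $s\lambda T_1$, Lemma \ref{17} tells me that every singleton is $s\lambda$-closed, so in particular $\{q\}$ is $s\lambda$-closed. From $p \neq q$ we get $p \notin \{q\}$, putting us in the hypothesis of Definition \ref{14} with the $s\lambda$-closed set $A = \{q\}$ and the point $p$.

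Invoking $s\lambda$-regularity then produces $s\lambda$-open sets $E$ and $F$ with $\{q\} \subset E$, $p \in F$, and $\overline{sE_\lambda} \cap \overline{sF_\lambda} = \emptyset$. Relabelling if necessary so that the set containing $p$ is named first, this is verbatim the $s\lambda T_{2\frac{1}{2}}$ (i.e., $s\lambda$-Urysohn) condition of Definition \ref{37}, so $(Y,\gamma)$ is $s\lambda$-Urysohn.

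There is no real obstacle here; the proof is essentially a one-line unpacking of definitions, with the only non-trivial input being the identification of singletons with $s\lambda$-closed sets via Lemma \ref{17}. The only point to be mindful of is that the paper's Definition \ref{14} of $s\lambda$-regularity is the \emph{strong} version (demanding disjoint $s\lambda$-closures, not merely disjoint $s\lambda$-open sets), so the conclusion comes out of the regularity axiom without any additional shrinking argument of the kind used in Theorem \ref{15} (2)$\Rightarrow$(1).
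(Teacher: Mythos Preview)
Your proof is correct and follows essentially the same approach as the paper's: both use Lemma \ref{17} to turn a singleton into a $s\lambda$-closed set and then apply Definition \ref{14} directly to obtain the Urysohn separation. The only cosmetic difference is that the paper takes $\{p\}$ as the closed set and $q$ as the external point, whereas you take $\{q\}$ and $p$; this is symmetric and immaterial.
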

\begin{proof}
Let $ p,q\in Y, p\not=q $. For $ s\lambda T_1 $ GTspace, every singleton is $ s\lambda $-closed, by Lemma \ref{17}. So $ \{p\} $ is $ s\lambda $-closed and $ q\not\in \{p\} $, then there exist $ s\lambda $-open sets $ E,F$ such that $ \{p\}\in E, \{q\}\in F $ and $ \overline{sE_\lambda}\cap\overline{sF_\lambda}=\emptyset $. Hence the result follows.
\end{proof}

\begin{definition}\label{42}(c.f.\cite{SS}).
A GTspace $ (Y,\gamma) $ is said to satisfy $ s\lambda T_{3\frac{1}{2}} $ axiom if for each $ s\lambda $-closed set $ F $ and each point $ p\in Y, p\not\in F $, there always exists a $ s\lambda $-continuous real valued function $ f $ on $ Y $ such that $ f(p)=0 $ and $ f(y)=1 $ if $ y\in F $ and $ 0\leq f(y)\leq 1 $ for $ y\in Y $. 

The GTspace $ (Y,\gamma) $ which satisfies $ s\lambda T_{3\frac{1}{2}} $ axiom is called $ s\lambda $-completely regular GTspace. 
\end{definition}

\begin{definition}\label{43}(c.f.\cite{CG}).
A $ s\lambda $-completely regular $  s\lambda T_1$   GTspace $ (Y,\gamma) $ is called $ s\lambda T_{3\frac{1}{2}} $ GTspace or, is called $ s\lambda $-Tychonoff GTspace.
\end{definition}

\begin{definition}\label{43A}
(i): A $  s\lambda T_1 $   GTspace $ (Y,\gamma) $ is said to be $  s\lambda T_3 $ GTspace if it satisfies $  s\lambda T_3 $ axiom i.e. any $ s\lambda $-closed set $ P $ and a point $ q $ of $ Y, q\not\in P $, are $ s\lambda $-strongly separated in $ (Y,\gamma) $.

(ii): A $  s\lambda T_1 $   GTspace $ (Y,\gamma) $ is said to be $  s\lambda T_4 $ GTspace if it satisfies $  s\lambda T_4 $ axiom i.e. any two disjoint $ s\lambda $-closed sets of $ Y $ are $ s\lambda $-strongly separated in $ (Y,\gamma) $.

(iii): A $  s\lambda T_1 $   GTspace $ (Y,\gamma) $ is said to be $  s\lambda T_5 $ GTspace if it satisfies $  s\lambda T_5 $  axiom i.e. if two sets are $ s\lambda $-weakly separated, they are $ s\lambda $-strongly separated in $ (Y,\gamma) $.
\end{definition}

\begin{theorem}\label{44} Let $ (Y,\gamma) $ be a GTspace then  following hold:

(1) Every $ s\lambda T_4 $ GTspace satisfying condition (A) as mentioned in Lemma \ref{31} is  $ s\lambda $-Tychonoff.

(2) Every $ s\lambda $-Tychonoff GTspace $ (Y,\gamma) $ is $ s\lambda $-completely regular.

(3) Eevery $ s\lambda $-completely regular GTspace is $ s\lambda $-regular.

(4) Every $ s\lambda $-Tychonoff GTspace $ (Y,\gamma) $ is  $ s\lambda T_3 $.

(5) Every $ s\lambda $-Tychonoff GTspace $ (Y,\gamma) $ is $ s\lambda $-completely Hausdorff.
\end{theorem}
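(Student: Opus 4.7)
The plan is to treat the five claims separately, using the definitions in Section~4 together with the characterizations of $s\lambda$-normality (Theorem \ref{29}(4)) and the bridge theorems linking separation by a $s\lambda$-continuous real valued function with separation by disjoint $s\lambda$-closed neighbourhoods (Theorems \ref{35} and \ref{36}). In every case the basic move is the same: push a closed/point separation through to a continuous function (or back), using that $s\lambda T_1$ forces singletons to be $s\lambda$-closed by Lemma \ref{17}.

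For (1), I would start from a $s\lambda T_4$ GTspace, so $(Y,\gamma)$ is $s\lambda T_1$ and by Definition \ref{43A}(ii) every pair of disjoint $s\lambda$-closed sets is $s\lambda$-strongly separated; by Theorem \ref{29}(4) this is the same as $s\lambda$-normality. Given a $s\lambda$-closed $F$ and $p\in Y$ with $p\notin F$, Lemma \ref{17} makes $\{p\}$ $s\lambda$-closed and hence disjoint from $F$. Invoking Theorem \ref{36} under condition (A) produces a $s\lambda$-continuous real valued function $f$ on $Y$ with $f(p)=0$, $f(y)=1$ for $y\in F$, and $0\le f\le 1$, i.e.\ the $s\lambda T_{3\frac{1}{2}}$ axiom. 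Together with $s\lambda T_1$ this is precisely the $s\lambda$-Tychonoff condition (Definition \ref{43}). Part (2) is then immediate from Definition \ref{43}, since the definition of $s\lambda$-Tychonoff already includes $s\lambda$-complete regularity.

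For (3), I would start with a $s\lambda$-closed set $P$ and $p\notin P$ in a $s\lambda$-completely regular GTspace; by Definition \ref{42} there is a $s\lambda$-continuous real valued function $f$ on $Y$ with $f(p)=0$, $f\equiv 1$ on $P$, and $0\le f\le 1$. Applying Theorem \ref{35} to $\{p\}$ and $P$ yields disjoint $s\lambda$-closed neighbourhoods; inspecting the argument of that theorem (the sets $f^{-1}[0,\tfrac14)\subset f^{-1}[0,\tfrac14]$ and $f^{-1}(\tfrac34,1]\subset f^{-1}[\tfrac34,1]$) one reads off $s\lambda$-open sets $E\ni p$ and $F\supset P$ whose $s\lambda$-closures are contained in these disjoint $s\lambda$-closed sets, so $\overline{sE_\lambda}\cap\overline{sF_\lambda}=\emptyset$. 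This verifies Definition \ref{14}, giving $s\lambda$-regularity.

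Part (4) is now a quick combination: a $s\lambda$-Tychonoff GTspace is $s\lambda T_1$ by definition and $s\lambda$-regular by (3); the $s\lambda$-regularity condition of Definition \ref{14} supplies disjoint $s\lambda$-open sets separating any $s\lambda$-closed $P$ from a point $q\notin P$, which is exactly the $s\lambda T_3$-axiom in Definition \ref{43A}(i). For (5), given distinct $p,q$ in a $s\lambda$-Tychonoff GTspace, Lemma \ref{17} makes $\{q\}$ $s\lambda$-closed and $p\notin\{q\}$, and complete regularity furnishes a $s\lambda$-continuous $f$ with $f(p)=0$, $f(q)=1$, $0\le f\le 1$, i.e.\ Definition \ref{38}. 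The only real content is in (1) and (3); the potential pitfall in (1) is remembering that Theorem \ref{36} requires condition~(A), which is exactly why that hypothesis appears in the statement, and in (3) the subtle point is extracting the closure inclusions from the proof of Theorem \ref{35} to get $\overline{sE_\lambda}\cap\overline{sF_\lambda}=\emptyset$ rather than merely $E\cap F=\emptyset$.
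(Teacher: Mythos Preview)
Your proposal is correct and follows essentially the same route as the paper's own proof: for (1) you use $s\lambda T_1$ to make $\{p\}$ $s\lambda$-closed and then invoke Theorem~\ref{36} under condition~(A); (2) is immediate from Definition~\ref{43}; (3) goes through Theorem~\ref{35}; (4) follows from (3); and (5) uses $s\lambda T_1$ plus complete regularity. Your explicit invocation of Theorem~\ref{29}(4) in (1) and your unpacking of the closure inclusions in (3) make the logic slightly more transparent than the paper's terser presentation, but the argument is the same.
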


\begin{proof}
(1) Let $ (Y,\gamma) $ be a $  s\lambda T_4 $ GTspace satisfying the condition $ (A) $,  $ F $ be a $ s\lambda $-closed set and $ p\in Y $ such that  $p\not\in F $. As the GTspace is $ s\lambda T_1 $, $ \{p\} $ is $s\lambda$-closed. Since the GTspace is also $ s\lambda $-normal, then by Theorem \ref{36}, there exists a $ s\lambda $-continuous real valued function $ f $ on $ Y $ such that $ f(p)=0 $ and $ f(y)=1 $ if $ y\in F $ and $ 0\leq f(y)\leq 1 $ for all $ y\in Y $. Hence $ (Y,\gamma) $ is a $ s\lambda $-completely regular GTspace as well as a $ s\lambda $-Tychonoff GTspace.

(2) This follows immediately from the Definition \ref{43}.

(3) Let $ (Y,\gamma) $ be a $ s\lambda $-completely regular GTspace and $ F $ be a $ s\lambda $-closed set and $ p $ be a point of $ Y, p\not\in F $. Then $ \{p\} $ and $ F $ are separated by a $ s\lambda $-continuous real valued function. Hence by Theorem \ref{35}, $ \{p\} $ and $ F $ are separated by disjoint $ s\lambda $-closed neighbourhoods in  $ (Y,\gamma) $. Hence  $ (Y,\gamma) $ is $ s\lambda $-regular GTspace.

(4) It is an immediate consequence of the property (3) of this Theorem.

(5) Let $ (Y,\gamma) $ be a $ s\lambda $-Tychonoff GTspace and $ p,q\in Y, p\not=q $. Since $ (Y,\gamma) $ is $  s\lambda T_1$, the singleton $ \{p\} $ is $ s\lambda $-closed. By property (2) $ (Y,\gamma) $ is $ s\lambda $-completely regular. So $ p $ and $ q $ can be separated by a $ s\lambda $-continuous real valued function on $ Y $. Hence $ (Y,\gamma) $ is $ s\lambda $-completely Hausdorff GTspace.
\end{proof}

\begin{theorem}\label{45}
A $ s\lambda $-regular and $ s\lambda $-normal GTspace satisfying the condition (A) as mentioned in Lemma \ref{31} is $ s\lambda $-completely regular.
\end{theorem}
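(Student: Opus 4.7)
The plan is to combine $s\lambda$-regularity with the Urysohn-type construction already packaged in Theorem \ref{36}. Given a $s\lambda$-closed set $F\subset Y$ and a point $p\in Y$ with $p\notin F$, I would first invoke the $s\lambda$-regularity of $(Y,\gamma)$ (Definition \ref{14}) to produce $s\lambda$-open sets $E,G$ with $F\subset E$, $p\in G$ and the stronger separation condition $\overline{sE_\lambda}\cap\overline{sG_\lambda}=\emptyset$. The key point is that this promotes the original ``point/closed-set'' separation into a separation of two disjoint $s\lambda$-closed sets, namely $\overline{sE_\lambda}$ and $\overline{sG_\lambda}$, which is exactly the input required by Theorem \ref{36}.

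Next, I would apply Theorem \ref{36} to these two disjoint $s\lambda$-closed sets. Since $(Y,\gamma)$ is $s\lambda$-normal and satisfies condition (A) of Lemma \ref{31}, that theorem produces a $s\lambda$-continuous function $f:Y\to[0,1]$ with $f\equiv 0$ on $\overline{sG_\lambda}$ and $f\equiv 1$ on $\overline{sE_\lambda}$, together with $0\le f(y)\le 1$ for every $y\in Y$. Because $p\in G\subset\overline{sG_\lambda}$ we get $f(p)=0$, and because $F\subset E\subset\overline{sE_\lambda}$ we get $f(y)=1$ for every $y\in F$. This is precisely the data required by Definition \ref{42}, so $(Y,\gamma)$ satisfies the $s\lambda T_{3\frac{1}{2}}$ axiom, i.e., it is $s\lambda$-completely regular.

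There is really no combinatorial obstacle here, since all the substantive technical work (the dyadic construction of $V(\tfrac{r}{2^n})$ and the verification of $s\lambda$-continuity at each point) has already been carried out in the proof of Theorem \ref{36}; condition (A) is needed precisely there, for forming the basic $s\lambda$-neighborhoods $V(\tfrac{r+1}{2^n})\cap(Y-\overline{s(V(\tfrac{r-1}{2^n}))_\lambda})$ used in Case 3 of that proof. The only genuine step in the present theorem is the ``upgrade'' from $s\lambda$-regular separation of $p$ and $F$ to normal-style separation of the two disjoint $s\lambda$-closures $\overline{sG_\lambda},\overline{sE_\lambda}$; after that, Theorem \ref{36} does the rest. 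Thus the proof should be quite short, essentially a two-line reduction to Theorem \ref{36}.
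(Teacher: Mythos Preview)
Your proposal is correct and follows essentially the same approach as the paper: upgrade the point/closed-set separation via $s\lambda$-regularity to a pair of disjoint $s\lambda$-closed sets, then invoke Theorem \ref{36}. The only cosmetic difference is that the paper uses the equivalent form of regularity in Theorem \ref{15}(2) to obtain a single $s\lambda$-open $V$ with $p\in V\subset\overline{sV_\lambda}\subset Y-F$ and then applies Theorem \ref{36} to the disjoint $s\lambda$-closed sets $\overline{sV_\lambda}$ and $F$, whereas you use Definition \ref{14} directly and work with the two closures $\overline{sG_\lambda}$, $\overline{sE_\lambda}$; both routes are equally valid.
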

\begin{proof}
Let $ (Y,\gamma) $ be a $ s\lambda $-regular and $ s\lambda $-normal GTspace satisfying the condition $ (A) $. Let $ F $ be a $ s\lambda $-closed set and $ p\in X, p\not\in F $. Then $ p\in Y-F, $ a $ s\lambda $-open set. Then by Theorem \ref{15} (2), there exists a $ s\lambda $-open set $ V $ such that $ p\in V\subset \overline{sV_\lambda}\subset Y-F $. Since $ (X,\gamma) $ is also a $ s\lambda $-normal GTspace satisfying the condition (A), by Theorem \ref{36} the pair of disjoint $ s\lambda $-closed sets $ \overline{sV_\lambda} $ and $ F $ are separated by a $ s\lambda $-continous real valued function $ f $ on $ Y $. Thus $ f(p)=0 $ since $ p\in \overline{sV_\lambda}, f(y)=1 $ for all $ y\in F $ and $ 0\leq f(y)\leq 1 $ for all $ y\in Y $. Hence $ (Y,\gamma) $ is $ s\lambda $-completely regular GTspace.
\end{proof}

\begin{theorem}\label{46}
If $ (Y,\gamma) $ is $ s\lambda$-completely regular GTspace, then the condition (1) holds:

(1) for each $ s\lambda $-closed singleton $ \{y\}\subset
Y $ and for each $sg_\lambda $-open set $ G $ containing $ y $, there exists a $ sg_\lambda $-open set $ E $ such that $ y\in E\subset \overline{sE_\lambda}\subset sInt_\lambda(G) $.

(2) If for a GTspace $ (Y,\gamma) $ the condition (1) holds then for each $ s\lambda $-closed singleton $ \{y\}\subset
Y $ and for each $s\lambda $-open set $ G, y\in G $, there exists a $ sg_\lambda $-open set $ E $ such that $ y\in sInt_\lambda(E)\subset E\subset \overline{sE_\lambda}\subset G $.

(3) For each $ sg_\lambda $-closed singleton $ \{y\}\subset
Y $ and for each $s\lambda $-open set $ G $ containing $ y $, there exists a $ sg_\lambda $-open set $ E $ such that $ \overline{s\{y\}_\lambda}\subset E\subset \overline{sE_\lambda}\subset G $ if $ (Y,\gamma) $ is $ s\lambda $-normal.

(4) For each $ sg_\lambda $-closed singleton $ \{y\}\subset
Y $ and for each $s\lambda $-open set $ G $ containing $ y $, there exists a $ s\lambda $-open set $ E $ such that $ \overline{s\{y\}_\lambda}\subset E\subset \overline{sE_\lambda}\subset G $, if the GTspace is $ s\lambda $-normal.
\end{theorem}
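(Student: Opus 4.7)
The plan is to dispatch the four clauses in order, each by reducing to a tool already proved in Section 3. The only real bookkeeping consists of tracking whether a given set is $s\lambda$-open or the weaker $sg_\lambda$-open, and of invoking Theorem~\ref{9} to pass between ``$\{y\}\subset H$'' (for $sg_\lambda$-open $H$) and ``$y\in sInt_\lambda(H)$''.

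For clause~(1), I would first note that by Theorem~\ref{44}~(3) a $s\lambda$-completely regular GTspace is $s\lambda$-regular. Given the $s\lambda$-closed singleton $\{y\}$ inside the $sg_\lambda$-open set $G$, Theorem~\ref{9} yields $y\in sInt_\lambda(G)$, which is a $s\lambda$-open set containing $y$. Now apply the characterisation (1)$\Leftrightarrow$(2) of $s\lambda$-regularity in Theorem~\ref{15} to the point $y$ and the $s\lambda$-open set $sInt_\lambda(G)$: this produces a $s\lambda$-open set $E$ with $y\in E\subset\overline{sE_\lambda}\subset sInt_\lambda(G)$. Since every $s\lambda$-open set is $sg_\lambda$-open (Remark~\ref{10}), this $E$ is the required $sg_\lambda$-open set.

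For clause~(2), assume (1) holds and let $\{y\}$ be $s\lambda$-closed with $y\in G$ and $G$ $s\lambda$-open; then $G$ is $sg_\lambda$-open by Remark~\ref{10} and $sInt_\lambda(G)=G$ by Theorem~\ref{5}~(4). Applying (1) gives a $sg_\lambda$-open set $E$ with $y\in E\subset\overline{sE_\lambda}\subset G$. Because $\{y\}$ is $s\lambda$-closed and $E$ is $sg_\lambda$-open containing $y$, Theorem~\ref{9} promotes $y\in E$ to $y\in sInt_\lambda(E)$, and we obtain the chain $y\in sInt_\lambda(E)\subset E\subset \overline{sE_\lambda}\subset G$ as required.

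For clause~(3), since $\{y\}$ is $sg_\lambda$-closed and $G$ is $s\lambda$-open with $\{y\}\subset G$, the very definition of $sg_\lambda$-closedness (Definition~\ref{7}) forces $\overline{s\{y\}_\lambda}\subset G$. Now $\overline{s\{y\}_\lambda}$ is $s\lambda$-closed and $G$ is $s\lambda$-open, so under $s\lambda$-normality, Theorem~\ref{30}~(2) produces a $sg_\lambda$-open set $E$ with $\overline{s\{y\}_\lambda}\subset E\subset\overline{sE_\lambda}\subset G$. Clause~(4) is handled by exactly the same reduction, but invoking the stronger Theorem~\ref{29}~(3) in place of Theorem~\ref{30}~(2) to obtain a $s\lambda$-open witness $E$ with $\overline{s\{y\}_\lambda}\subset E\subset\overline{sE_\lambda}\subset G$. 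No genuine obstacle is anticipated: the whole proof is a careful dictionary between the two openness notions, with the non-trivial content already packaged into Theorems~\ref{15}, \ref{29}, \ref{30}, \ref{44} and the definition of $sg_\lambda$-closedness.
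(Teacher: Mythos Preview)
Your proposal is correct and follows essentially the same route as the paper for clauses~(1)--(3): Theorem~\ref{44}(3) plus Theorem~\ref{15}(2) for (1), feeding (1) back through Theorem~\ref{9} for (2), and Definition~\ref{7} plus Theorem~\ref{30}(2) for (3). For clause~(4) you are slightly more direct than the paper: having obtained $\overline{s\{y\}_\lambda}\subset G$, you invoke Theorem~\ref{29}(3) to get a $s\lambda$-open $E$ immediately, whereas the paper first applies Theorem~\ref{30}(2) to get a $sg_\lambda$-open set and then passes to its $s\lambda$-interior via Theorem~\ref{9}; both arguments are valid and yours saves a step.
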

\begin{proof}
(1)  Let $ \{y\} $ be a $ s\lambda $-closed singleton and $ G $ be a  $ sg_\lambda $-open set of a $ s\lambda $-completely regular GTspace $ (Y,\gamma) $ and  $ \{y\}\subset G $. So by Theorem \ref{9}, $ \{y\}\subset sInt_\lambda(G) $, a $ s\lambda $-open set. By Theorem \ref{44} (3),  $ (Y,\gamma) $ is $ s\lambda $-regular, then by Theorem \ref{15} (2), there exists a $ s\lambda $-open set $ E $ which is also a $ sg_\lambda $-open set such that $ y\in
E\subset \overline{sE_\lambda}\subset sInt_\lambda(G) $. 

(2)  Suppose $ \{y\} $ is a $ s\lambda $-closed singleton and $ G $ is a  $ s\lambda $-open set of $ Y $ and $ \{y\}\subset G $. So $ G $ is $ sg_\lambda $-open. Then by (1), there exists a $ sg_\lambda $-open set $ E $  such that $ y\in
E\subset \overline{sE_\lambda}\subset sInt_\lambda(G) $. Since $ E $ is $ sg_\lambda $-open containing $ s\lambda $-closed set $ \{y\} $ so $ \{y\}\subset sInt_\lambda(E) $. So $ y\in sInt_\lambda(E)\subset E\subset \overline{sE_\lambda}\subset G $.

(3) Suppose $ (Y,\gamma) $ is a $ s\lambda $-normal GTspace, $ \{y\} $ is a $ sg_\lambda $-closed set and $ G $ is a  $ s\lambda $-open set of $ Y $ and $ \{y\} \subset G $, then $ G \supset \overline{s\{y\}_\lambda} $,  a $ s\lambda $-closed set. By Theorem \ref{30} (2), there exists a $ sg_\lambda $-open set $ E $ such that $ \overline{s\{y\}_\lambda}\subset E\subset\overline{sE_\lambda}\subset  G $. 

(4) Let $ (Y,\gamma) $ be a $ s\lambda $-normal GTspace and $ \{y\} $ be a $ sg_\lambda $-closed singleton contained in a  $ s\lambda $-open set   $ G $ of $ Y $, then $ G\supset \overline{s\{y\}_\lambda} $,    a $ s\lambda $-closed set. By Theorem \ref{30} (2), there exists a $ sg_\lambda $-open set $ E $ such that $ \overline{s\{y\}_\lambda}\subset E\subset\overline{sE_\lambda}\subset  G $. Since $ E $ is $ sg_\lambda $-open and $\overline{s\{y\}_\lambda} $ is $ s\lambda $-closed, then  by Theorem \ref{9}, $\overline{s\{y\}_\lambda}\subset sInt_\lambda(E) $. Put $ F= sInt_\lambda(E)$, then $ F $ is $ s\lambda $-open and hence $ \overline{s\{y\}_\lambda}\subset F\subset \overline{sF_\lambda}=\overline{s(sInt_\lambda(E))_\lambda}\subset \overline{sE_\lambda}\subset G $.
\end{proof}

\begin{definition}\label{48}
A GTspace $ (Y,\gamma) $ is said to be $ s\lambda $-completely normal if for each pair of  $ s\lambda $-weakly separared sets $ G, H $ (as mentioned in Note \ref{7A}), there exist $ s\lambda $-open sets $ U, V $ such that $ G\subset U, H\subset V $ and $ \overline{sU_\lambda}\cap \overline{sV_\lambda}=\emptyset $. 
\end{definition}

\begin{theorem}\label{49}
Every subspace of a $ s\lambda $-completely normal GTspace is $ s\lambda $-normal. 
\end{theorem}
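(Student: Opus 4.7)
The plan is to mimic the classical argument that every subspace of a completely normal space is normal. Let $Z \subset Y$ be equipped with the induced generalized topology $\gamma_Z = \{V \cap Z : V \in \gamma\}$, which is a generalized topology on $Z$ exactly as in the proof of Theorem \ref{22}. Let $A, B$ be any pair of disjoint $s\lambda$-closed sets in $(Z, \gamma_Z)$; the goal is to produce $s\lambda$-open sets $E, F$ of $(Z, \gamma_Z)$ with $A \subset E$, $B \subset F$ and such that their $s\lambda$-closures computed inside $Z$ are disjoint, which is precisely $s\lambda$-normality of the subspace.

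First, I would lift the separation of $A$ and $B$ to the ambient space. Since $A$ is $s\lambda$-closed in $(Z, \gamma_Z)$, one expects $\overline{sA_\lambda} \cap Z = A$, where $\overline{sA_\lambda}$ is taken in $(Y, \gamma)$; hence $B \cap \overline{sA_\lambda} \subset B \cap A = \emptyset$, and symmetrically $A \cap \overline{sB_\lambda} = \emptyset$. By Lemma \ref{8A}, this makes $A$ and $B$ a pair of $s\lambda$-weakly separated sets in $(Y, \gamma)$.

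Next, invoking Definition \ref{48}, the complete $s\lambda$-normality of $(Y, \gamma)$ supplies $s\lambda$-open sets $U, V$ of $(Y, \gamma)$ with $A \subset U$, $B \subset V$ and $\overline{sU_\lambda} \cap \overline{sV_\lambda} = \emptyset$. Setting $E = U \cap Z$ and $F = V \cap Z$ yields $s\lambda$-open subsets of $(Z, \gamma_Z)$ containing $A$ and $B$ respectively. Since the $s\lambda$-closure of $E$ in $Z$ is contained in $\overline{sU_\lambda} \cap Z$ and the $s\lambda$-closure of $F$ in $Z$ is contained in $\overline{sV_\lambda} \cap Z$, we obtain disjoint $s\lambda$-closures inside $Z$ and therefore $(Z, \gamma_Z)$ is $s\lambda$-normal.

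The principal obstacle is the subspace bookkeeping for $s\lambda$-open and $s\lambda$-closed sets, which the paper has not tabulated in one place. Concretely, to close the argument one must verify: (a) an $s\lambda$-closed set in $(Z, \gamma_Z)$ has the form $Z \cap C$ for some $s\lambda$-closed $C$ in $(Y, \gamma)$, so that $A = \overline{sA_\lambda} \cap Z$; (b) $U \cap Z$ is $s\lambda$-open in $(Z, \gamma_Z)$ whenever $U$ is $s\lambda$-open in $(Y, \gamma)$; and (c) the $s\lambda$-closure of a subset of $Z$ computed in $\gamma_Z$ is contained in the $s\lambda$-closure computed in $\gamma$. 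Each of these reduces to unpacking Definition \ref{1} and Definition \ref{3A}, together with the observation that $sker_{\gamma_Z}(D) \subset sker_\gamma(D) \cap Z$ for $D \subset Z$. Once these routine subspace properties are in hand, the three-step sketch above delivers the theorem.
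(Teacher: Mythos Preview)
Your argument is essentially the same as the paper's: show that the two disjoint $s\lambda$-closed subsets of the subspace are $s\lambda$-weakly separated in the ambient space, apply $s\lambda$-complete normality there, and then trace the separating sets back to the subspace. The paper phrases the first step slightly differently---writing $F=F_1\cap X$ for some ambient $s\lambda$-closed $F_1$ and checking $F_1\cap K=\emptyset$ directly rather than invoking Lemma~\ref{8A}---but this is exactly your item~(a), and the paper likewise asserts (b) and (c) without further comment, so you are in fact more explicit than the original about the subspace bookkeeping that carries the proof.
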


\begin{proof}
Let $ (Y,\gamma) $ be a $ s\lambda $-completely normal GTspace and $ X $ be a subset of it. Let $ F, K $ be a pair of disjoint $ s\lambda $-closed sets in $ (X, \gamma_X), \gamma_X $ being the subspace generalized topology in $ X $. So there exist two $ s\lambda $-closed sets $ F_1, K_1 $ in $ (Y,\gamma) $ such that $ F=F_1\cap X, K=K_1\cap X $. Since $ F\cap K=\emptyset$ and $ (F_1-F)\cap K=\emptyset $ (as $ F_1-F\subset Y-X $ and $ F\subset X), F_1\cap K=\{(F_1-F)\cup F\}\bigcap K=\{(F_1-F)\cap K\}\cup \{F\cap K\}=\emptyset $. Similarly, $ F\cap K_1=\emptyset $. Consequently, $ F, K $ are $ s\lambda $-weakly separated in $ (Y,\gamma) $ by two $ s\lambda $-open sets $ Y-K_1 $ and $ Y-F_1 $. As $ (Y,\gamma) $ is $ s\lambda $-completely normal, there exist $ s\lambda $-open sets $ U, V $ in $ (Y,\gamma) $ such that $ F\subset U, K\subset V $ and $ \overline{sU_\lambda}\cap \overline{sV_\lambda}=\emptyset $. Then $ F\subset (U\cap X)\subset U,  K\subset (V\cap X)\subset V $ where $ (U\cap X),  ( V\cap X) $ are $ s\lambda $-open sets in $ (X, \gamma_X) $. Now $ s\lambda $-closures of $ (U\cap X) $ and $ (V\cap X) $ in $ (X, \gamma_X) $ are respectively  $ X\bigcap \overline{s(U\cap X)_\lambda} $ and $ X\bigcap \overline{s(V\cap X)_\lambda} $, therefore, $ X\bigcap \overline{s(U\cap X)_\lambda} \bigcap X\bigcap \overline{s(V\cap X)_\lambda}=X\bigcap (\overline{s(U\cap X)_\lambda}\cap \overline{s(V\cap X)_\lambda})\subset X\bigcap (\overline{sU_\lambda}\cap \overline{sV_\lambda})=X\cap \emptyset=\emptyset $. Hence $ (X, \gamma_X) $ is $ s\lambda $-normal. \end{proof}

\begin{theorem}\label{49A} Let $ (Y,\gamma) $ be a GTspace    satisfying the condition (A) as mentioned in Lemma \ref{31}.
If every subspace of it  is $ s\lambda $-normal, then every pair of $ s\lambda $-weakly separated set is $ s\lambda $-strongly separated (as mentioned in Note \ref{7A}). 
\end{theorem}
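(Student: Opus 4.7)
The plan is to mimic the classical topological argument: given $s\lambda$-weakly separated $G,H$, build an $s\lambda$-open subspace $X$ in which $G$ and $H$ sit inside disjoint $s\lambda$-closed sets, then invoke the hypothesis that every subspace of $(Y,\gamma)$ is $s\lambda$-normal to separate these inside $X$, and finally lift the separating sets back to $Y$ using condition (A).

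First, by Lemma~\ref{8A} the $s\lambda$-weak separation of $G,H$ gives $G\cap\overline{sH_{\lambda}}=\emptyset$ and $H\cap\overline{sG_{\lambda}}=\emptyset$. Set
\[
X = Y\setminus\bigl(\overline{sG_{\lambda}}\cap\overline{sH_{\lambda}}\bigr).
\]
Since arbitrary intersections of $s\lambda$-closed sets are $s\lambda$-closed (noted in Section~\ref{sec:pre}), $\overline{sG_{\lambda}}\cap\overline{sH_{\lambda}}$ is $s\lambda$-closed, so $X$ is $s\lambda$-open in $Y$. Both $G$ and $H$ lie in $X$: indeed $G\cap\overline{sH_{\lambda}}=\emptyset$ forces $G\subset Y\setminus\overline{sH_{\lambda}}\subset X$, and similarly for $H$.

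Next, form the subspace $(X,\gamma_X)$ as in Theorem~\ref{22}. I would verify that $P_1:=\overline{sG_{\lambda}}\cap X$ and $P_2:=\overline{sH_{\lambda}}\cap X$ are $s\lambda$-closed subsets of $X$ whose intersection is, by the very definition of $X$, empty; moreover $G\subset P_1$ and $H\subset P_2$. Applying the hypothesis to the subspace $X$ yields disjoint $s\lambda$-open sets $U',V'$ in $(X,\gamma_X)$ with $P_1\subset U'$ and $P_2\subset V'$. Writing $U'=U\cap X$ and $V'=V\cap X$ for some $s\lambda$-open $U,V$ in $Y$, the sets $U\cap X$ and $V\cap X$ are $s\lambda$-open in $Y$ because $X$ itself is $s\lambda$-open in $Y$ and condition (A) guarantees that the intersection of two $s\lambda$-open sets is $s\lambda$-open. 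Then $G\subset U\cap X$, $H\subset V\cap X$, and $(U\cap X)\cap(V\cap X)=U'\cap V'=\emptyset$, proving that $G,H$ are $s\lambda$-strongly separated.

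The main obstacle I anticipate is the subspace bookkeeping in the middle paragraph: one must check that $s\lambda$-closedness in $Y$ restricts correctly to $X$, that every subspace $s\lambda$-open set is of the form $U\cap X$ for some $s\lambda$-open $U\subset Y$, and that $s\lambda$-closures computed in $(X,\gamma_X)$ behave as expected (cf.\ the remark in the proof of Theorem~\ref{49} that $s\lambda$-closures in a subspace are the restrictions of the ambient $s\lambda$-closures intersected with the subspace). Condition (A) is what makes the final lift from $s\lambda$-open-in-$X$ to $s\lambda$-open-in-$Y$ go through; without it, $U\cap X$ need not be $s\lambda$-open in $Y$ even though both $U$ and $X$ are, and the classical argument would break exactly at that step.
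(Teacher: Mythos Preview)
Your proposal is correct and follows essentially the same route as the paper: both take $X=Y\setminus(\overline{sG_\lambda}\cap\overline{sH_\lambda})$ (the paper calls it $E$), observe that $\overline{sG_\lambda}\cap X$ and $\overline{sH_\lambda}\cap X$ are disjoint $s\lambda$-closed in the subspace, apply $s\lambda$-normality there, and use condition~(A) together with the $s\lambda$-openness of $X$ to push the separating sets back to $Y$. The paper's write-up glosses over exactly the subspace bookkeeping you flag (that $s\lambda$-open sets in $(X,\gamma_X)$ arise as traces of $s\lambda$-open sets of $Y$), so your caveats are well placed rather than a divergence from the intended argument.
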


\begin{proof}
Let the condition hold and $ G,H $ be two $ s\lambda $-weakly separated sets of $ Y $. Then by Lemma \ref{8A},  $ G\cap \overline{sH_\lambda}=\emptyset= H\cap \overline{sG_\lambda} $. Assume $ E=Y-( \overline{sG_\lambda} \cap \overline{sH_\lambda})\Rightarrow E=(Y- \overline{sG_\lambda}) \cup (Y-\overline{sH_\lambda})\Rightarrow E $ is a $ \lambda $-open set in $ (Y,\gamma) $. Now $ E\cap \overline{sG_\lambda} $ and $ E\cap \overline{sH_\lambda} $ are two disjoint $ s\lambda $-closed sets of $ (E,\gamma_E)$, a $ s\lambda $-normal GTspace  by assumption, where  $ \gamma_E $ is subspace generalized topology. Then there exist $ s\lambda $-open sets $ U,V $ in $ (E,\gamma_E) $ containing $ E\cap \overline{sG_\lambda} $ and $ E\cap \overline{sH_\lambda} $ respectively such that $ \overline{sU_\lambda} \cap \overline{sV_\lambda}=\emptyset $,  the   $ s\lambda $-closures of $ U, V $ being taken with respect to $ (E,\gamma_E) $. Since the GTspace satisfies the condition (A) and $ E $ is a $ \lambda $-open set in $ (Y,\gamma) $; $ U,V $ are $ s\lambda $-open in $ (Y,\gamma) $. Also  $ E\cap \overline{sG_\lambda}=[(Y-\overline{sG_\lambda})\cup(Y-\overline{sH_\lambda})]\bigcap \overline{sG_\lambda}=\{(Y-\overline{sG_\lambda})\cap \overline{sG_\lambda}\}\bigcup\{(Y-\overline{sH_\lambda})\cap\overline{sG_\lambda}\}=(Y-\overline{sH_\lambda})\bigcap \overline{sG_\lambda}\supset G $, since $ G\subset Y-\overline{sH_\lambda} $. Similarly, $ E\cap \overline{sH_\lambda}\supset H $. Thus $ G\subset E\cap \overline{sG_\lambda}\subset U, H\subset E\cap \overline{sH_\lambda}\subset V $ and $ U\cap V=\emptyset $. Hence the result follows.  
\end{proof}

\begin{corollary}\label{49B}
Let $ (Y,\gamma) $ be a GTspace    satisfying the condition (A) as mentioned in Lemma \ref{31}.
If every subspace of it  is $ s\lambda $-closed and $ s\lambda $-normal, then it is $ s\lambda $-completely normal. 
\end{corollary}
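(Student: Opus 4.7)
The plan is to reduce Corollary \ref{49B} to a direct application of Theorem \ref{49A} and then exploit the extra hypothesis that every subspace is $s\lambda$-closed to upgrade ``disjoint $s\lambda$-open separators'' to ``separators with disjoint $s\lambda$-closures''. Concretely, I would begin by letting $G$ and $H$ be an arbitrary pair of $s\lambda$-weakly separated sets in $Y$, since Definition \ref{48} requires us to produce suitable $s\lambda$-open sets $U$ and $V$ for any such pair.

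First I would invoke Theorem \ref{49A}: by hypothesis, $(Y,\gamma)$ satisfies condition (A) of Lemma \ref{31}, and every subspace is (in particular) $s\lambda$-normal, so Theorem \ref{49A} applies and furnishes $s\lambda$-open sets $U, V$ with $G \subset U$, $H \subset V$, and $U \cap V = \emptyset$. This handles the production of the separating pair but only gives strong separation in the weak sense of disjointness; it does not yet guarantee $\overline{sU_\lambda}\cap\overline{sV_\lambda}=\emptyset$, which is what Definition \ref{48} demands.

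The key second step is to cash in the second half of the hypothesis. Since $U$ and $V$ are themselves subspaces of $Y$, and by hypothesis every subspace of $(Y,\gamma)$ is $s\lambda$-closed, both $U$ and $V$ are $s\lambda$-closed subsets of $Y$. By Theorem \ref{5}(5), extended to $s\lambda$-closure as noted in Section 2, this gives $\overline{sU_\lambda}=U$ and $\overline{sV_\lambda}=V$. Therefore
\[
\overline{sU_\lambda}\cap\overline{sV_\lambda} \;=\; U\cap V \;=\; \emptyset,
\]
and the requirements of Definition \ref{48} are met, so $(Y,\gamma)$ is $s\lambda$-completely normal.

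The main (and essentially only) conceptual obstacle is recognising how the two parts of the hypothesis cooperate: hereditary $s\lambda$-normality together with condition (A) is just what is needed to run Theorem \ref{49A}, while the ``every subspace is $s\lambda$-closed'' clause collapses the $s\lambda$-closure of any set to itself, making the passage from disjoint $s\lambda$-open separators to separators with disjoint $s\lambda$-closures automatic. Once this is observed, no further work on closures, neighbourhoods, or the subspace $E=Y-(\overline{sG_\lambda}\cap\overline{sH_\lambda})$ used inside the proof of Theorem \ref{49A} is needed, and the corollary follows in a few lines.
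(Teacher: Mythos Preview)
Your argument is correct, and in fact it is more direct than the route the paper takes. The paper does not use Theorem \ref{49A} as a black box; instead it reopens the construction inside the proof of Theorem \ref{49A}, singles out the subspace $E=Y-(\overline{sG_\lambda}\cap\overline{sH_\lambda})$, and uses the hypothesis that $E$ is $s\lambda$-closed in $(Y,\gamma)$ to argue that the $s\lambda$-closures of $U$ and $V$ computed in $(E,\gamma_E)$ coincide with those computed in $(Y,\gamma)$ (since $U,V\subset E$ gives $\overline{sU_\lambda}\subset\overline{sE_\lambda}=E$, etc.), so that the disjointness of closures already obtained in the subspace transfers to the ambient space. Your alternative applies the ``every subspace is $s\lambda$-closed'' clause not to $E$ but to the separators $U$ and $V$ themselves, collapsing $\overline{sU_\lambda}$ and $\overline{sV_\lambda}$ to $U$ and $V$ and finishing in one line. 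This exploits the full strength of the hypothesis (indeed, under it every subset equals its own $s\lambda$-closure, so one could even take $U=G$ and $V=H$ directly), whereas the paper's argument only needs the single subspace $E$ to be $s\lambda$-closed; that is the trade-off between the two approaches.
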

\begin{proof}
We consider the set $ E $ as in the proof of the Theorem \ref{49A}, so $ (E, \gamma_E) $ is a subspace. Let $ G $ and $ H $ be two $ s\lambda $-weakly separated sets in a GTspace $ (Y,\gamma) $. Then  by Theorem \ref{49A}, we have a pair of $ s\lambda $-open sets $ U $ and $ V $ which separate the sets $ G $ and $ H $ $ s\lambda $-strongly.  Now $ s\lambda $-closure of $ U,  V$ in $ (E, \gamma_E) $ are $ E\cap \overline{sU_\lambda} $ and $ E\cap \overline{sV_\lambda} $ where $  \overline{sU_\lambda} $ and $  \overline{sV_\lambda} $ are the $ s\lambda $-closure of  $ U, V $ in $ (Y,\gamma) $. As the subspace $ E $ is $ s\lambda $-closed by supposition, $ s\lambda $-closure of $ U, V $ in $ (E, \gamma_E) $ are the same as the $ s\lambda $-closure of $  U, V $ in $ (Y,\gamma) $. Since $  U\subset  E, \overline{sU_\lambda} \subset \overline{sE_\lambda}=E\Rightarrow  E\cap \overline{sU_\lambda} = \overline{sU_\lambda}  $. Similarly $  E\cap \overline{sV_\lambda} = \overline{sV_\lambda}  $. Hence the result follows.
\end{proof}

\begin{definition}\label{49C}(c.f.\cite{CG}).
A GTspace $ (Y ,\gamma) $ is said to be a $ s\lambda $-perfectly normal if for each pair of disjoint $ s\lambda $-closed sets $ M, N $ there exists a $ s\lambda $-continuous real valued function $ f $ on $ Y $ such that $ 0\leqslant f(y)\leqslant 1 $  for all $ y\in Y $ and $ f^{-1}(0)=M $ and $ f^{-1}(1)=N $.
\end{definition}

\begin{remark}\label{50}
Every $ s\lambda $-completely normal GTspace  is $ s\lambda $-normal and every $ s\lambda T_5 $ GTspace is $ s\lambda T_4 $ (as defined in Definition \ref{43A}).  Obviously, if a GTspace is $ s\lambda T_5 $  every subspace of it is $ s\lambda T_4 $ and  every $ s\lambda $-perfectly normal GTspace is $ s\lambda $-normal. \end{remark}

\begin{lemma}\label{51}
If $ f $ is a $ s\lambda $-continuous real valued function on a GTspace $ (Y, \gamma) $, the set $ B=\{y\in Y: f(y)< p\} $ is $ s\lambda $-open. 
\end{lemma}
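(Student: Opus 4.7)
The plan is to realize $B$ as the $s\lambda$-continuous preimage of a standard open set of the real line, and then invoke Definition \ref{33} directly. Write $\mathbb{R}$ with its usual topology $\sigma$, which is a generalized topology in the sense of Section \ref{sec:pre}. The half line $(-\infty,p)$ is a $\sigma$-open subset of $\mathbb{R}$, and by construction
\[
B=\{y\in Y:f(y)<p\}=f^{-1}\bigl((-\infty,p)\bigr).
\]

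Since $f:(Y,\gamma)\longrightarrow(\mathbb{R},\sigma)$ is $s\lambda$-continuous, Definition \ref{33} guarantees that the inverse image of every $\sigma$-open set is $s\lambda$-open in $(Y,\gamma)$. Applying this to the $\sigma$-open set $(-\infty,p)$ immediately gives that $B$ is $s\lambda$-open, which is the desired conclusion.

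There is no real obstacle here: the only subtle point is to make sure that the codomain is being read as a generalized topological space with its usual topology (so that $(-\infty,p)$ qualifies as an open set in the relevant sense), after which the statement is just an instance of the defining property of $s\lambda$-continuity. If one prefers to avoid appealing to $\mathbb{R}$ directly, an equivalent route is to observe that $(-\infty,p)=\bigcup_{n\in\mathbb{N}}(p-n,p)$ and note that arbitrary unions of $s\lambda$-open sets are $s\lambda$-open, reducing the claim to the case of bounded basic open intervals, but this is unnecessary.
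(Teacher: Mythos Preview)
Your proof is correct and follows essentially the same approach as the paper: both identify $B=f^{-1}((-\infty,p))$, note that $(-\infty,p)$ is open in the usual topology on $\mathbb{R}$, and apply Definition~\ref{33} to conclude that $B$ is $s\lambda$-open.
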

\begin{proof}
As the interval $ (-\infty, p) $ is open in real number space $ (R,\sigma) $ where $ \sigma $ is usual topology on $ R $, then the set $ B $ is $ s\lambda $-open, by Definition \ref{33}. 
\end{proof}

\begin{lemma}\label{52} (c.f.\cite{CG}).
Let $ (Y,\gamma) $ be a $ s\lambda $-normal GTspace satisfying the condition (A) as mentioned in Lemma \ref{31}. Then a necessary and sufficient condition for the existence of a $ s\lambda$-continuous real valued function $ f $ on $ Y $ for a pair of disjoint $ s\lambda $-closed sets $ M,  N $  satisfying $ f^{-1}(0)=M $ is that $ M $ is a $ s\lambda G_\delta $-set (as mentioned in Note \ref{7A}).
\end{lemma}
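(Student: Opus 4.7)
The plan is to prove both directions, with the necessity being a quick consequence of Lemma \ref{51} and condition (A), and the sufficiency requiring a Urysohn-style construction built on Theorem \ref{36}.

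For necessity, I would argue as follows. Suppose a $s\lambda$-continuous $f: Y\to[0,1]$ exists with $f = 0$ on $M$, $f = 1$ on $N$, and $f^{-1}(0) = M$. Since $f$ is non-negative on $M$, I can write $M = \bigcap_{n=1}^{\infty}\{y\in Y: f(y)<\tfrac{1}{n}\}$. By Lemma \ref{51}, each set $\{y: f(y)<\tfrac{1}{n}\}$ is $s\lambda$-open, exhibiting $M$ as a countable intersection of $s\lambda$-open sets, i.e. as a $s\lambda G_\delta$-set.

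For sufficiency, suppose $M$ is a $s\lambda G_\delta$-set, say $M = \bigcap_{n=1}^{\infty} V_n$ with each $V_n$ a $s\lambda$-open set. Replacing $V_n$ by $V_n\cap (Y-N)$ (which is again $s\lambda$-open by condition (A), since $Y-N$ is $s\lambda$-open), I may assume $V_n\cap N=\emptyset$, i.e.\ $N\subset Y-V_n$. For each $n$, $M$ and $Y-V_n$ are disjoint $s\lambda$-closed sets, so by Theorem \ref{36} there is a $s\lambda$-continuous function $g_n:Y\to [0,1]$ with $g_n=0$ on $M$ and $g_n=1$ on $Y-V_n$. Define
\[
f(y)=\sum_{n=1}^{\infty} 2^{-n} g_n(y).
\]
Then $0\leq f(y)\leq 1$ for every $y\in Y$. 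If $y\in M$, each $g_n(y)=0$, so $f(y)=0$. If $y\notin M$, then $y\notin V_n$ for some $n$, so $g_n(y)=1$ and $f(y)\geq 2^{-n}>0$; thus $f^{-1}(0)=M$. If $y\in N$, then $y\in Y-V_n$ for every $n$, so $g_n(y)=1$ and $f(y)=1$.

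The main obstacle is verifying that $f$, being an infinite series, is $s\lambda$-continuous on $Y$. The partial sums $f_k=\sum_{n=1}^{k} 2^{-n}g_n$ are $s\lambda$-continuous (finite sums of $s\lambda$-continuous real functions are $s\lambda$-continuous, which I would justify by a standard preimage-of-basic-open argument using condition (A) to intersect finitely many $s\lambda$-open sets), and $f_k\to f$ uniformly because $\|f-f_k\|_\infty\leq 2^{-k}$. I would then establish the lemma that a uniform limit of $s\lambda$-continuous real-valued functions is $s\lambda$-continuous: given $q\in Y$ and an open interval $(a,b)\ni f(q)$, choose $\eta>0$ with $(f(q)-\eta,f(q)+\eta)\subset (a,b)$, pick $N$ with $\|f-f_N\|_\infty<\eta/3$, and form the $s\lambda$-open set $U=f_N^{-1}\bigl((f(q)-\eta/3,f(q)+\eta/3)\bigr)$. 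A triangle-inequality computation shows $f(U)\subset(f(q)-\eta,f(q)+\eta)\subset(a,b)$, so $f^{-1}((a,b))$ is a union of such $s\lambda$-open sets and hence $s\lambda$-open. Since preimages of open subsets of $\mathbb{R}$ are $s\lambda$-open, $f$ is $s\lambda$-continuous, completing the proof.
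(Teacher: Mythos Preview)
Your proof is correct and follows essentially the same route as the paper: necessity via $M=\bigcap_n\{y:f(y)<1/n\}$ and Lemma~\ref{51}, sufficiency by applying Theorem~\ref{36} to each pair $(M,\,Y-V_n)$ and summing $\sum 2^{-n}g_n$ with uniform convergence (the paper cites the Weierstrass $M$-test). The only cosmetic differences are that the paper first nests the $V_n$'s by finite intersection and arranges $V_1\cap N=\emptyset$ via normality rather than by intersecting with $Y-N$, and simply asserts $s\lambda$-continuity of the limit where you spell out the uniform-limit argument.
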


\begin{proof}
Suppose there exists a  $ s\lambda$-continuous real valued function $ f $ on $ Y $ for a pair of disjoint $ s\lambda$-closed sets $ M,N $ which satisfies $ f^{-1}(0)=M $ and $ U_n=\{y\in Y: f(y)<\frac{1}{n}, n=1,2,.....  \} $. Then by Lemma \ref{51}, each $ U_n $ is a $ s\lambda $-open set. Hence $ M=\bigcap\{U_n:n=1,2,......\} $ is a $ s\lambda G_\delta $-set.

Conversely, let $ (Y,\gamma) $ be a $ s\lambda $-normal GTspace satisfying the condition (A) as mentioned in Lemma \ref{31}. Suppose $ M,N $ are disjoint $ s\lambda $-closed sets and $ M $ is a $ s\lambda G_\delta $-set i.e. $ M=\bigcap\{U_n:n=1,2,......\} $ say, where each $ U_n $ is a $ s\lambda $-open set. Let $ V_n=U_1\cap U_2\cap........\cap U_n $, then by condition (A), $ V_n $ is also a $ s\lambda $-open set for $ n=1,2,...... $ and $ M=\bigcap \{V_n:n=1,2,........\} $. Also $ V_1\supset V_2\supset ........ . $ and by $ s\lambda $-normality criterion, we can assume that $ V_1\cap N=\emptyset $. Corresponding to  each pair of disjoint $ s\lambda $-closed sets $ \{M, Y-V_n\} $ there exists, by Theorem \ref{37}, a $ s\lambda $-continuous real valued function $ f_n $ on $ Y $ for each $ n $  and $ f_n(M)=0, f(Y-V_n)=1 $ and $ 0\leqslant f_n(y)\leqslant 1 $ for all $ y\in Y $. Let $ f=\Sigma(\frac{1}{2^n})f_n $. Then $ \frac{f_n(y)}{2^n}\leqslant \frac{1}{2^n}=K_n $ say, for $ n=1,2,....... $ and for all $ y\in Y $. So $ K_n $ are positive constants and hence the series $ \Sigma K_n $ is convergent. Therefore, the infinite series $ \Sigma(\frac{1}{2^n})f_n $ is uniformly convergent (Weierstrass' M-test) and converges to $ f $ on $ Y $ and $ f $ is also a $ s\lambda $-continuous real valued function on $ Y $ and $ f(M)=0, f(N)=1 $ and $ 0\leqslant f(y)\leqslant 1 $ on $ Y $. Now we claim that there is no point $ y $ outside $ M $ where $ f(y)=0 $. Let $ y\not\in M $, then $ y\not\in V_r\Rightarrow y\in Y-V_r $ for some $ r $ and $ N\subset Y-V_r $. So $ f_r(y)\not=0 $ but $ \leqslant 1 $ and therefore, $ f(y)\geqslant \frac{1}{2^r} $ i.e. $ f(y)\not=0 $; consequently, $ f^{-1}(0)=M $.
\end{proof}

\begin{theorem}\label{53}
Let $ (Y,\gamma) $ be a $ s\lambda $-perfectly normal GTspace satisfying the condition (A) as mentioned in Lemma \ref{31} such that every $ s\lambda $-closed set in it is a $ s\lambda G_\delta $-set; then every pair of $ s\lambda $-weakly separated sets is $ s\lambda $-strongly separated.
\end{theorem}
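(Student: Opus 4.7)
The plan is to mimic the classical proof that perfectly normal spaces are completely normal, adapted to the generalized-topology setting. By Remark \ref{50}, $s\lambda$-perfectly normal implies $s\lambda$-normal, so Definition \ref{28} and Theorem \ref{29} are available; condition (A) will give that finite intersections of $s\lambda$-open sets are $s\lambda$-open, while the blanket fact from Section \ref{sec:pre} that arbitrary unions of $s\lambda$-open sets are $s\lambda$-open handles the rest.

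Let $G,H$ be an $s\lambda$-weakly separated pair. By Lemma \ref{8A} one has $G\cap\overline{sH_\lambda}=\emptyset=H\cap\overline{sG_\lambda}$; my target is to produce disjoint $s\lambda$-open sets $U\supset G$ and $V\supset H$. The first step is to exploit the $s\lambda G_\delta$-hypothesis via complements: since every $s\lambda$-closed set is a countable intersection of $s\lambda$-open sets, every $s\lambda$-open set is a countable union of $s\lambda$-closed sets. In particular I would write $Y-\overline{sG_\lambda}=\bigcup_n A_n$ and $Y-\overline{sH_\lambda}=\bigcup_n B_n$ with each $A_n$ and $B_n$ being $s\lambda$-closed; the weak-separation hypothesis forces $H\subset\bigcup_n A_n$ and $G\subset\bigcup_n B_n$.

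Next I would apply $s\lambda$-normality pairwise. For each $n$, the disjoint $s\lambda$-closed sets $A_n$ and $\overline{sG_\lambda}$ yield $s\lambda$-open $P_n\supset A_n$ and $P_n'\supset\overline{sG_\lambda}$ with $\overline{s(P_n)_\lambda}\cap\overline{s(P_n')_\lambda}=\emptyset$; in particular $\overline{s(P_n)_\lambda}\cap G=\emptyset$ since $G\subset\overline{sG_\lambda}\subset P_n'$. A symmetric choice gives $Q_n\supset B_n$ with $\overline{s(Q_n)_\lambda}\cap H=\emptyset$. I then propose the interlocking sets
\[
U=\bigcup_n\Bigl(Q_n\cap\bigcap_{k\leq n}\bigl(Y-\overline{s(P_k)_\lambda}\bigr)\Bigr),\qquad V=\bigcup_n\Bigl(P_n\cap\bigcap_{k\leq n}\bigl(Y-\overline{s(Q_k)_\lambda}\bigr)\Bigr).
\]
Each bracketed summand is a finite intersection of $s\lambda$-open sets, hence $s\lambda$-open by condition (A); thus $U$ and $V$ are $s\lambda$-open. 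For $x\in G$, pick $n$ with $x\in B_n\subset Q_n$; since $\overline{s(P_k)_\lambda}\cap G=\emptyset$ for every $k$, $x$ lies in the $n$th summand of $U$. Symmetrically $H\subset V$. Disjointness is the standard interlocking argument: if $x\in U\cap V$ with $x$ in the $n$th summand of $U$ and the $m$th of $V$, then (without loss of generality $m\leq n$) one has $x\in P_m\subset\overline{s(P_m)_\lambda}\subset\bigcup_{k\leq n}\overline{s(P_k)_\lambda}$, which contradicts the defining condition of the $n$th summand of $U$.

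The main obstacle is bookkeeping rather than substance: I must (i)~verify ``$s\lambda$-open'' at every juncture, which is exactly what condition (A) buys for the inner finite intersections and the blanket closure under arbitrary unions buys for the outer $\bigcup_n$, and (ii)~use the \emph{closure}-disjointness $\overline{s(P_n)_\lambda}\cap\overline{s(P_n')_\lambda}=\emptyset$ granted by Definition \ref{28} (not the weaker $P_n\cap P_n'=\emptyset$), precisely so that $\overline{s(P_n)_\lambda}\cap G=\emptyset$ survives. Once these two points are pinned down the verification is mechanical, and notably perfect normality enters only through Remark \ref{50}, i.e.\ only as normality plus the explicit $s\lambda G_\delta$-hypothesis we already have.
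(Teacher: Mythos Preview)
Your argument is correct and constitutes a genuinely different proof from the paper's. The paper proceeds functionally: it invokes Lemma \ref{52} (which needs $s\lambda$-normality, condition (A), and the $s\lambda G_\delta$ hypothesis) to produce $s\lambda$-continuous real-valued functions $f,g$ with $f^{-1}(0)=\overline{sG_\lambda}$ and $g^{-1}(0)=\overline{sH_\lambda}$, and then takes $U=\{y:f(y)<g(y)\}$ and $V=\{y:g(y)<f(y)\}$ as the separating $s\lambda$-open sets. You instead run the classical set-theoretic interlocking construction directly, writing the complements of $\overline{sG_\lambda}$ and $\overline{sH_\lambda}$ as countable unions of $s\lambda$-closed sets and building $U,V$ by trimming with closures.

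Your route is more elementary in this setting: it avoids Lemma \ref{52} and any appeal to real-valued functions, and it makes the role of condition (A) completely transparent (it is used only for the finite intersections inside each summand). The paper's route is shorter once Lemma \ref{52} is available, but that lemma itself absorbs the Urysohn-type work, and one must also justify that $\{f<g\}$ is open in the generalized-topology sense. Both approaches ultimately consume exactly the same hypotheses---as you correctly observe, perfect normality enters only as normality plus the $s\lambda G_\delta$ condition already assumed.
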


\begin{proof}
Let the condition hold and $ G, H $ be $ s\lambda $-weakly separated sets in a $ s\lambda $-perfectly normal GTspace $ (Y, \gamma) $.   Then by Lemma \ref{8A}, $ G\cap \overline{sH_\lambda}=\emptyset = H\cap \overline{sG_\lambda} $. Hence, for any $ s\lambda $-closed set $ P $  with $  \overline{sG_\lambda}\cap P=\emptyset $, there exists, by Lemma \ref{52}, a $ s\lambda $-continuous real valued function $ f $ on $ Y $ corresponding to the pair $\{\overline{sG_\lambda}, P\} $ satisfying the condition $ f^{-1}(0)=\overline{sG_\lambda} $. Similarly, for any $ s\lambda $-closed set $ Q $  with $ \overline{sH_\lambda}\cap Q=\emptyset $, there exists a $ s\lambda $-continuous real valued function $ g $ on $ Y $ corresponding to the pair $\{\overline{sH_\lambda}, Q\} $ satisfying the condition $ g^{-1}(0)=\overline{sH_\lambda} $. Let $ U=\{y\in Y: f(y) < g(y)\} $ and $ V=\{y\in Y: g(y) < f(y)\} $. Then $ U, V $ are $ \gamma $-open sets, so $  s\lambda $-open sets
and $ U\cap V=\emptyset $. Now we assert that  $ G\subset U $. For, if $ y\in G $, then $ f(y)=0 $ and $ \overline{sH_\lambda}\cap G=\emptyset $ and so we have $ g(y)>0=f(y) $. Similarly, $ H\subset V $. Hence the result follows.
\end{proof}

\begin{theorem}\label{54}
Let $ (Y,\gamma) $ be a GTspace, then the following hold:

(i) $ (Y,\gamma) $ is a $ s\lambda $-completely normal GTspace;

(ii) for each pair of $ s\lambda $-weakly separated sets $ A,B $ of $ Y, $ there exist $ sg_\lambda $-open sets $ U, V $ and $ s\lambda $-open sets $ E, F $ such that 

$ A\subset U\subset \overline{sU_\lambda}\subset E \subset \overline{sE_\lambda}\subset Y-\overline{sV_\lambda} $  and $ B\subset V\subset \overline{sV_\lambda}\subset F \subset \overline{sF_\lambda}\subset Y-\overline{sU_\lambda} $;

(iii) for each pair of sets $ A, B $ of $ Y $ satisfies $ A\cap Q=\emptyset=B\cap P $ where $ P,Q $ are $ s\lambda $-closed sets, there exist $ sg_\lambda $-open sets $ U, V $  and $ s\lambda $-open sets $ E, F $ such that 

$ A\subset U\subset \overline{sU_\lambda}\subset E \subset \overline{sE_\lambda}\subset Y-\overline{sV_\lambda} $  and $ B\subset V\subset \overline{sV_\lambda}\subset F \subset \overline{sF_\lambda}\subset Y-\overline{sU_\lambda} $;

(iv) for each $ s\lambda $-closed set $ P $ and each $ s\lambda $-open set $ G, G\supset P $, there exists a $ sg_\lambda $-open set $ E $ such that $ P\subset E\subset \overline{sE_\lambda} \subset G $. 
\end{theorem}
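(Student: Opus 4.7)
The plan is to prove this as a cyclic chain of equivalences (i) $\Rightarrow$ (ii) $\Rightarrow$ (iii) $\Rightarrow$ (iv) $\Rightarrow$ (i). The principal tools are Definition \ref{48} (the defining condition of $s\lambda$-complete normality), Lemma \ref{8A} (the characterization of $s\lambda$-weak separation), Remark \ref{50} (which gives $s\lambda$-normality from $s\lambda$-complete normality), the interpolation machinery of Theorems \ref{29}(3) and \ref{30}(2), and the observation from Remark \ref{10} that every $s\lambda$-open set is $sg_\lambda$-open.

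For (i) $\Rightarrow$ (ii): given $s\lambda$-weakly separated $A, B$, invoke Definition \ref{48} to produce $s\lambda$-open sets $U, V$ with $A\subset U$, $B\subset V$ and $\overline{sU_\lambda}\cap\overline{sV_\lambda}=\emptyset$. Then $\overline{sU_\lambda}$ is $s\lambda$-closed and lies in the $s\lambda$-open set $Y-\overline{sV_\lambda}$, so by Theorem \ref{29}(3) there is an $s\lambda$-open $E$ with $\overline{sU_\lambda}\subset E\subset\overline{sE_\lambda}\subset Y-\overline{sV_\lambda}$. Since $U$ is both $s\lambda$-open and $sg_\lambda$-open, the required chain $A\subset U\subset\overline{sU_\lambda}\subset E\subset\overline{sE_\lambda}\subset Y-\overline{sV_\lambda}$ is obtained; the symmetric construction starting from $V$ yields $F$.

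For (ii) $\Rightarrow$ (iii): the idea is that (iii) reduces to the weak-separation setup of (ii). If $A\cap Q=\emptyset=B\cap P$ with $P,Q$ $s\lambda$-closed, then in particular one may choose $P=\overline{sA_\lambda}$, $Q=\overline{sB_\lambda}$ (both $s\lambda$-closed), in which case the hypothesis becomes $A\cap\overline{sB_\lambda}=\emptyset=B\cap\overline{sA_\lambda}$, which is exactly $s\lambda$-weak separation by Lemma \ref{8A}; (ii) then delivers the required $sg_\lambda$-open and $s\lambda$-open sets. For (iii) $\Rightarrow$ (iv): given $s\lambda$-closed $P\subset G$ with $G$ $s\lambda$-open, set $A=P$, $B=Y-G$ (both $s\lambda$-closed) and take $P'=P$, $Q'=Y-G$ in the hypothesis of (iii); the disjointness $A\cap Q'=\emptyset=B\cap P'$ follows from $P\subset G$. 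The $sg_\lambda$-open set $U$ produced by (iii) satisfies $P\subset U\subset\overline{sU_\lambda}\subset E\subset\overline{sE_\lambda}\subset Y-\overline{sV_\lambda}\subset Y-B=G$, so $E:=U$ works for (iv).

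The main obstacle will be (iv) $\Rightarrow$ (i), where one must upgrade an interpolation property for $s\lambda$-closed/$s\lambda$-open pairs to a separation of arbitrary $s\lambda$-weakly separated sets $A, B$ (which need not be $s\lambda$-closed). The plan is: from the weak-separation hypothesis and Lemma \ref{8A}, $A\subset Y-\overline{sB_\lambda}$ and $B\subset Y-\overline{sA_\lambda}$, both $s\lambda$-open. First, convert (iv) into a genuine $s\lambda$-normality statement by passing from the $sg_\lambda$-open $E$ to $sInt_\lambda(E)$ (which is $s\lambda$-open and, by Theorem \ref{9}, still contains the $s\lambda$-closed set $P$), recovering the hypothesis of Theorem \ref{29}(3). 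Then apply this normality interpolation to the disjoint $s\lambda$-closed pair $\overline{sA_\lambda}$ and $\overline{sB_\lambda}$ after carving out their intersection as in the subspace argument of Theorem \ref{49A}: work inside the $s\lambda$-open set $Y-(\overline{sA_\lambda}\cap\overline{sB_\lambda})$, where the traces of $\overline{sA_\lambda}$ and $\overline{sB_\lambda}$ become disjoint $s\lambda$-closed sets, apply the normality interpolation twice to separate them by $s\lambda$-open sets $U, V$ with $\overline{sU_\lambda}\cap\overline{sV_\lambda}=\emptyset$, and verify $A\subset U$, $B\subset V$. The delicate point will be ensuring that the closures taken inside the subspace agree with the ambient $s\lambda$-closures, for which one may need to invoke the kind of regularity assumption used elsewhere in the paper (condition (A) of Lemma \ref{31}); if so, this hypothesis should be carried through the proof.
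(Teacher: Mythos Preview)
Your principal misstep is structural: you read the theorem as a cycle of equivalences and accordingly try to close the loop with (iv) $\Rightarrow$ (i). The paper does not claim this and in fact cannot: condition (iv) is verbatim the condition of Theorem~\ref{30}(2), which is equivalent to $s\lambda$-normality, and $s\lambda$-normality is strictly weaker than $s\lambda$-complete normality (compare Remark~\ref{50} and the classical analogy). Your own sketch of (iv) $\Rightarrow$ (i) already signals the trouble---you end up needing condition~(A) and a subspace-closure argument as in Theorem~\ref{49A}/Corollary~\ref{49B}, neither of which is available under the bare hypotheses. The paper's proof establishes only the forward implications (i)~$\Rightarrow$~(ii), (ii)~$\Rightarrow$~(iii), and (ii)~$\Rightarrow$~(iv); note in particular that (iv) is derived from (ii), not from (iii).

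A second issue is your handling of (ii) $\Rightarrow$ (iii). You write that ``one may choose $P=\overline{sA_\lambda}$, $Q=\overline{sB_\lambda}$'', but in (iii) the sets $P,Q$ are part of the hypothesis, not yours to pick. The correct reduction (which the paper carries out, albeit with some unclear phrasing) is to observe that the hypothesis $A\cap Q=\emptyset=B\cap P$ with $P,Q$ $s\lambda$-closed places $A\subset Y-Q$ and $B\subset Y-P$, and then to verify that $A,B$ are $s\lambda$-weakly separated so that (ii) applies. Your (i) $\Rightarrow$ (ii) matches the paper's argument; your (iii) $\Rightarrow$ (iv) is fine as an observation but is not what the paper proves (it goes (ii) $\Rightarrow$ (iv) directly, taking $P$ and $Y-G$ as the weakly separated pair).
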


\begin{proof} 
$ (i)  \Rightarrow (ii) $: Assume $ A,B $ are two $ s\lambda $-weakly separated sets of a $ s\lambda $-completely normal GTspace $ (Y,\gamma) $. So there exist $ s\lambda $-open sets $ U,V $ which are also $ sg_\lambda $-open sets containing $ A,B $ respectively such that $ \overline{sU_\lambda}\cap \overline{sV_\lambda}=\emptyset \Rightarrow \overline{sU_\lambda}\subset Y-\overline{sV_\lambda} $, a $ s\lambda $-open set. Again $ (Y,\gamma) $ is also a $ s\lambda $-normal, so by Theorem \ref{29} (3), there exist $ s\lambda $-open set $ E $ such taht $  \overline{sU_\lambda}\subset E\subset \overline{sE_\lambda}\subset Y-\overline{sV_\lambda} $. So  $ A\subset U\subset \overline{sU_\lambda}\subset E\subset \overline{sE_\lambda}\subset Y-\overline{sV_\lambda} $ Similarly, $\overline{sV_\lambda}\subset Y-\overline{sU_\lambda} $, a $ s\lambda $-open set and hence there exist $ s\lambda $-open set $ F $ such that $  \overline{sV_\lambda}\subset F\subset \overline{sF_\lambda}\subset Y-\overline{sU_\lambda} $ and hence $ B\subset V\subset \overline{sV_\lambda}\subset F\subset \overline{sF_\lambda}\subset Y-\overline{sU_\lambda} $.
 
$ (ii) \Rightarrow (iii) $: From the assumption we have $ A\subset Y-Q, B\subset Y-P $ where $ (Y-Q), (Y-P)$ are $ s\lambda $-open sets and $ A\cap (Y-P)=\emptyset=B\cap(Y-Q) $. So $ A,B $ are $ s\lambda $-weakly separated. Then by (ii), the result follows.
 
$ (ii) \Rightarrow(iv) $: Here $ P\subset G $, so $ (Y-G)\subset Y-P $ where $ G, (Y-P) $ are $ s\lambda $-open sets and $ P\cap (Y-P)=\emptyset=G\cap (Y-G) $. Then, $ P, Y-G $ are $ s\lambda $-weakly separated sets. So, by (ii), there exist $ sg_\lambda $-open sets $ U,V $ and $ s\lambda $-open sets as well as $ sg_\lambda $-open sets $ E,F $ such that $ P\subset U\subset \overline{sU_\lambda}\subset E\subset \overline{sE_\lambda}\subset Y-\overline{sV_\lambda} $ and $ (Y-G)\subset V\subset \overline{sV_\lambda}\subset F\subset \overline{sF_\lambda}\subset Y-\overline{sU_\lambda} $. Hence $ (Y-G)\subset \overline{sV_\lambda}\Rightarrow (Y-\overline{sV_\lambda})\subset G $. This implies that $ P\subset E \subset \overline{sE_\lambda} \subset G $. 
\end{proof}


\begin{thebibliography}{99}\baselineskip=16pt

\bibitem{AD}   Alexandroff, A. D., \textit{Additive set functions in abstract space,} Mat. Sb. (N.S.) \textbf{8} (50) (1940), 307-348 (English, Russian Summary).

\bibitem{AA}   Al-Hawary, Talal Ali and Al-Omari, Ahmed, \textit{Quasi b-open sets in bitopological spaces,} Abhath Al-Yarmouk: ``Basic Sci. and Eng."  \textbf{Vol 21}, No-1 (2012), pp 1-14.

\bibitem{BN} Bachman, G. and  Narici, L., \textit{Functional Analysis,} \textit{Academic Press, New York and London}. 



 

\bibitem{AR}    Banerjee, A. K. and Mondal, R., \textit{A note on connectedness in a bispace}, \textit{Malaya. J. Mat.}, \textbf{5} (1), 104-108.

\bibitem{JP1} Banerjee, A. K.  and Pal, J.,  \textit  { $\lambda^*$-closed sets and new separation axioms in Alexandroff spaces}, \textit{Korean J. Math.,}   \textbf{26} (2018). No. 4, 709-727. 




\bibitem{JP2} Banerjee, A. K.  and Pal, J.,  \textit  {Semi $\lambda^*$-closed sets and new separation axioms in Alexandroff spaces}, \textit{South East Asian J. of Math.$\&$ Math. Sci.,}   \textbf{14} (1) (2018), 115-134. 


\bibitem{BP}  Banerjee, A. K. and Saha, P. K., \textit{Pairwise semi bicompact and pairwise semi Lindeloff  bispaces}, \textit{Int. J. Math. Sci. and Engg. Appls.}, \textbf{11} (II),  47-57.

\bibitem{BL} Bhattacharyya, P. and  Lahiri, B. K., \textit{Semi-generalised closed sets in topology,} \textit{Indian J Math.} \textbf{29} (1987), 376-382. 

\bibitem{BC} Boonpok, C.,  \textit{Generalized $(\Lambda, b)$-closed sets in topological spaces},  \textit{Korean J. Math.,}   \textbf{25} (2017), No. 3. pp 437-453. 

\bibitem{AC} A. Cs$\acute{a}$sz$ \acute{a} $r,  Generalized topology,   generalized continuity,  \textit{Acta Math. Hungar.,}   \textbf{96} (2002), 351-357.

\bibitem{CA} A. Cs$\acute{a}$sz$ \acute{a} $r,  Generalized open sets,  \textit{Acta Math. Hungar.,}   \textbf{75} (1997), 65-87.


\bibitem{CG}  Chatterjee, B. C., Ganguly,  Adhikary,  \textit{ A  Text Book of Topology ,}\textit{ Asian Books Private Limited, New Delhi-110 002}.






  

\bibitem{DM} Dontchev, J. and Maki, H.,  \textit{On $ sg$-closed  sets and semi-$\lambda$-closed sets  }, \textit{Questions Ans. Gen. Topology}, \textbf{15 (2)} (1997), 259-266.

\bibitem{LD} Lahiri, B. K. and  Das, P.,  \textit{Certain bitopological concepts in a bispace},  \textit{Soochow Journal of Mathematics}, \textbf{27 (2)} (2001), 175-185.




\bibitem{NL} Levine,  N.,  \textit{Generalised closed sets in topology}, Rend. Cire.  Mat. Palermo \textbf{19} (2) (1970), 89-96.

\bibitem{MBD} Maki, H., Balachandran, K. and Devi, R.,  \textit{Remarks on semi-generalised closed  sets and generalised semi closed sets},  \textit{Kyungpook Math. J.}, \textbf{36 (1)} (1996), 155-163.


\bibitem{JRM} Munkres, J. R.,   \textit{Topology, Second Edition},  \textit{Massachusetts Institute of Technology}, \textbf{Prentice-Hall of I}.

\bibitem{AJ}  Pal. J. and Banerjee, A. K., \textit{$ s\beta_\lambda $-closed sets and some low separation axioms in GT-spaces}, \textit{Archived}. 

\bibitem{MS} Sarsak,  M. S., \textit{New separation axioms in generalized topological spaces}, \textit{Acta Math. Hungar.,} \textbf{132 (3)} (2011), 244-252.


\bibitem{SM} Sarsak, M. S.,  \textit{Weak separation axioms in generalized topological spaces}, \textit{Acta Math. Hungar.,} \textbf{131} (1-2) (2011), 110-121.

\bibitem{SS} Steen,  L. A. and Seebach, J. A., Jr., \textit{Counterexamples in topology}, \textit{Second Edition,} \textit{Springer-Verlag, New York, Berlin, Heidelberg, Tokyo}.



  
\end{thebibliography}
\end{document}